\documentclass[12pt]{amsart}

\usepackage[margin=1.3in]{geometry}
\usepackage[utf8]{inputenc}
\usepackage[T1]{fontenc}

\usepackage{subfiles}
\usepackage{comment}
\usepackage{float}
\usepackage{marginnote}
\usepackage{euscript}
\usepackage[dvipsnames]{xcolor}
\usepackage{graphicx}
\usepackage{amssymb}
\usepackage{mathrsfs}
\usepackage{amsthm}
\usepackage{amsmath}
\usepackage{mathtools}
\usepackage[cal=cm]{mathalfa}
\usepackage{stmaryrd}
\usepackage{upgreek}
\usepackage{bbm}
\usepackage[breaklinks, hypertexnames=false, hyperfootnotes=false, colorlinks=true,linktocpage=true, allcolors=highlight]{hyperref}
\usepackage{cleveref}
\usepackage{url}
\usepackage{float}
\usepackage{todonotes} %\setuptodonotes{color=blue!30}
\usepackage[full]{textcomp}
\usepackage{tikz,tikz-cd,tikz-3dplot}
\usepackage{pgfplots}
\usetikzlibrary{calc}
\usetikzlibrary{fadings}
\usetikzlibrary{decorations.pathmorphing}
\usetikzlibrary{decorations.pathreplacing}
\usetikzlibrary{patterns}
\usetikzlibrary{arrows,shadows,positioning, calc, decorations.markings,
	hobby,quotes,angles,decorations.pathreplacing,intersections,shapes}
\usetikzlibrary{fillbetween,backgrounds}
\usepackage{xcolor}

\definecolor{highlight}{HTML}{0b5394}
\definecolor{faint}{HTML}{666666}

\usepackage{anyfontsize}
\usepackage[lining]{libertine}%
\usepackage{courier}
\usepackage[T1]{fontenc}
\usepackage[utf8]{inputenc}
\usepackage{csquotes}
\makeatletter
\renewcommand*\libertine@figurestyle{LF}
\makeatother
\usepackage[libertine,libaltvw,liby]{newtxmath}
\usepackage[final]{microtype}
\usepackage{array}
\newcolumntype{H}{>{\setbox0=\hbox\bgroup}c<{\egroup}@{}}
\makeatletter
\def\@tocline#1#2#3#4#5#6#7{\relax
	\ifnum #1>\c@tocdepth % then omit
	\else
	\par \addpenalty\@secpenalty\addvspace{#2}%
	\begingroup \hyphenpenalty\@M
	\@ifempty{#4}{%
		\@tempdima\csname r@tocindent\number#1\endcsname\relax
	}{%
		\@tempdima#4\relax
	}%
	\parindent\z@ \leftskip#3\relax \advance\leftskip\@tempdima\relax
	\rightskip\@pnumwidth plus4em \parfillskip-\@pnumwidth
	#5\leavevmode\hskip-\@tempdima
	\ifcase #1
	\or\or \hskip 1em \or \hskip 2em \else \hskip 3em \fi%
	#6\nobreak\relax
	\dotfill\hbox to\@pnumwidth{\@tocpagenum{#7}}\par
	\nobreak
	\endgroup
	\fi}
\makeatother

\tikzset{
	commutative diagrams/.cd,
	arrow style=tikz,
	diagrams={>=stealth}
}
\tikzset{
	arrow/.pic={\path[tips,every arrow/.try,->,>=#1] (0,0) -- +(0,4pt);},
	pics/arrow/.default={triangle 90}
}
\tikzset{->-/.style={decoration={
			markings,
			mark=at position .6 with {\arrow{latex}}},postaction={decorate}}
}
\tikzset{
	c/.style={every coordinate/.try}
}
\usepackage{enumerate}
\usepackage{enumitem}
\setlist[enumerate,1]{%
	label=(\roman*)	,itemsep=0.3em
	}
\setlist[itemize]{itemsep=0.3em}

\newcommand{\define}[1]{\textbf{#1}}

\newcommand{\mr}{\mathrm}
\newcommand{\mf}{\mathfrak}

\newcommand{\mgp}{\mathsf}

\DeclareMathOperator{\Aut}{Aut}

\DeclareMathOperator{\Tot}{Tot}
\DeclareMathOperator{\Rep}{Rep}
\DeclareMathOperator{\Exp}{Exp}
\DeclareMathOperator{\Log}{Log}

\newcommand{\vir}{\mathrm{virt}}

\newcommand{\actson}{\,\rotatebox[origin=c]{-90}{$\circlearrowright$}\,}

\usepackage{xifthen}
\newcommand{\Aaff}[1][]{%
	\ifthenelse{\isempty{#1}}%
	{\mathbb{C}}% if #1 is empty
	{
		\ifthenelse{\equal{#1}{1}}{\mathbb{C}}{\mathbb{C}^{#1}}
	}% if #1 is not empty
} 
\newcommand{\Gm}[1][]{%
\ifthenelse{\isempty{#1}}%
{\mathbb{C}^\times}% if #1 is empty
{(\mathbb{C}^\times)^{#1}}% if #1 is not empty
} 
\newcommand{\gpT}{\mgp{T}}

\newcommand{\modulifont}[1]{#1}
\newcommand{\Mbar}{\overline{\modulifont{M}}}

\newcommand{\invariantfont}[1]{\mathsf{#1}}
\newcommand{\GW}{\invariantfont{GW}}
\newcommand{\GWarg}[3]{\GW_{#3}(#1,#2)}
\newcommand{\GWdiscarg}[3]{\GW^\bullet_{#3}(#1,#2)}

\newcommand{\MTwoMod}{\invariantfont{M2}}

\newcommand{\ri}{\mathrm{i}}
\newcommand{\re}{\mathrm{e}}

\newcommand{\vertspace}{\hspace{0.2ex}|\hspace{0.2ex}}

\newcommand{\sd}{anti-diagonal}
\newcommand{\locsd}{locally anti-diagonal}
\newcommand{\MemInd}{membrane index}
\newcommand{\MemInds}{membrane indices}
\newcommand{\distdir}{distinct direction}
\newcommand{\frp}{framing parity}
\newcommand{\mxp}{mixing parity}
\newcommand{\frmxp}{framing and mixing parity}

\def\capmystringaux#1#2\relax{\uppercase{#1}\lowercase{#2}}

\newcommand{\hhh}{\mathsf{H}}

\newcommand{\HodgeLambda}[2][]{\Lambda_{#1}(#2)
}

\newcommand{\transpose}{\mathrm{t}}

\newcommand\isomrightarrow{\stackrel{\sim}{\smash{\longrightarrow}\rule{0pt}{0.4ex}}}

\makeatletter % changes the catcode of @ to 11
\def\makeCal#1{%
	\expandafter\newcommand\csname c#1\endcsname{\mathcal{#1}}}

\def\makeBB#1{%
	\expandafter\newcommand\csname b#1\endcsname{\mathbb{#1}}}

\def\makeFrak#1{%
	\expandafter\newcommand\csname f#1\endcsname{\mathfrak{#1}}}

\def\makeScr#1{%
	\expandafter\newcommand\csname s#1\endcsname{\mathscr{#1}}}

\def\makeSF#1{%
	\expandafter\newcommand\csname sf#1\endcsname{\mathsf{#1}}}

\count@=0
\loop
\advance\count@ 1
\edef\y{\@Alph\count@} % Alph is upper case
\expandafter\makeCal\y
\expandafter\makeBB\y
\expandafter\makeFrak\y
\expandafter\makeScr\y
\expandafter\makeSF\y
\ifnum\count@<26
\repeat

\theoremstyle{plain}
\newtheorem{thm}{Theorem}[section]
\newtheorem{lem}[thm]{Lemma}

\newtheorem{prop}[thm]{Proposition}
\newtheorem{conj}[thm]{Conjecture}

\newtheorem*{conj*}{Conjecture}

\newtheorem{cor}[thm]{Corollary}
\newtheorem*{cor*}{Corollary}

\theoremstyle{definition}
\newtheorem{defn}[thm]{Definition}
\newtheorem{rmk}[thm]{Remark}

\newtheorem{example}[thm]{Example}

\theoremstyle{plain}
\newtheorem{innercustomthm}{Theorem}
\newenvironment{customthm}[1]
{\renewcommand\theinnercustomthm{#1}\innercustomthm}
{\endinnercustomthm}

\theoremstyle{plain}

\theoremstyle{plain}
\newtheorem{innercustomconj}{Conjecture}
\newenvironment{customconj}[1]
{\renewcommand\theinnercustomconj{#1}\innercustomconj}
{\endinnercustomconj}

\theoremstyle{definition}

\theoremstyle{plain}

\crefname{equation}{Eq.}{Eqs.}
\crefname{eqnarray}{Eq.}{Eqs.}
\crefname{algo}{algorithm}{algorithms}
\crefname{conj}{conjecture}{conjectures}
\crefname{lem}{lemma}{lemmas}
\crefname{thm}{theorem}{theorems}
\crefname{claim}{claim}{claims}
\crefname{rmk}{remark}{remarks}
\crefname{prop}{proposition}{propositions}
\crefname{section}{section}{sections}
\crefname{appendix}{appendix}{appendices}
\crefname{cor}{corollary}{corollaries}
\crefname{figure}{figure}{figures}
\crefname{table}{table}{tables}
\crefname{example}{example}{examples}
\crefname{prob}{problem}{problems}
\crefname{assm}{assumption}{assumptions}
\crefname{defn}{definition}{definitions}
\crefname{speculation}{speculation}{speculations}
\crefname{construction}{construction}{constructions}
\crefname{innercustomthm}{theorem}{theorems}
\crefname{innercustomconj}{conjecture}{conjectures}
\crefname{innercustomassumption}{assumption}{Assumption}
\crefname{innerpracticalresult}{practical result}{practical results}

\usepackage[
		backend=biber,
		style=alphabetic,
		natbib=true,
		url=false,
		doi=false,
		eprint=true,
		isbn=false,
		maxcitenames=5,
		maxbibnames=4,
		maxalphanames=4,
		useprefix=true,
		giveninits=true
	]{biblatex}

\AtEveryBibitem{\clearlist{language}}

\renewbibmacro{in:}{%
	\ifboolexpr{%
		test {\ifentrytype{article}}%
		or
		test {\ifentrytype{inproceedings}}%
	}{}{\printtext{\bibstring{in}\intitlepunct}}%
}

\DeclareFieldFormat[article,inbook,incollection,inproceedings,patent,thesis,unpublished,techreport,misc,book]{title}{\mkbibquote{#1}}

\title[GW invariants and membrane indices of 5-folds via TV]{Gromov--Witten invariants and membrane indices\\of fivefolds via the topological vertex}
\author{Yannik Schuler}

\address{ETH Zurich, Department of Mathematics, Rämistr.~101, 8092 Zurich, Switzerland}
\email{yannik.schuler@math.ethz.ch}

\begin{document}

\begin{abstract}
	We conjecture the existence of almost integer invariants governing the all-genus equivariant Gromov--Witten theory of Calabi--Yau fivefolds with a torus action. We prove the conjecture for skeletal, \locsd\ torus actions by establishing a vertex formalism evaluating the Gromov--Witten invariants via the topological vertex of Aganagic, Klemm, Mari\~{n}o and Vafa. We apply the formalism in several examples.
\end{abstract}

\maketitle
\vspace*{-2em}
\setcounter{tocdepth}{1}
\hypersetup{bookmarksdepth = 3}
\tableofcontents

%-------------------------------------------------------------------------------
\vspace*{-2em}
\section*{Introduction}

\subsection{Gromov--Witten invariants and the membrane index}
In recent work of Brini and the author \cite{BS24:refined}, a conjecture was put forward relating the equivariant Gromov--Witten invariants of a Calabi--Yau fivefold to the so-called membrane index. Concretely, let $Z$ be a Calabi--Yau fivefold equipped with the action of a torus $\gpT$ fixing the holomorphic fiveform. The conjecture asserts that the series
\begin{equation*}
	\GWarg{Z}{\gpT}{\beta} \coloneqq \sum_{g\geq 0}\int_{[\Mbar_{g}(Z,\beta)]^{\vir}_{\gpT}}1
\end{equation*}
of equivariant Gromov--Witten invariants equals the \smash{$\hat{A}$}-genus of the mathematically yet-to-be-constructed moduli space of M2-branes on $Z$. While the Gromov--Witten series is a formal power series in the torus weights $\epsilon_i$, the \smash{$\hat{A}$}-genus admits a lift to K-theory under reasonable assumptions on the moduli space of M2-branes. This makes the latter a rational function in $\re^{\epsilon_i}$. Thus, at a numerical level, the conjecture predicts a lift of the Gromov--Witten series to a rational function.

\begin{customconj}{A}
	\label{conj: generalised GV}
	Let $Z$ be a Calabi--Yau fivefold with a Calabi--Yau $\gpT$-action. There exist rational functions
	\begin{equation*}
		\Omega_{\beta}(q_i) \in \bZ\big[\tfrac{1}{2}\big]\left[ q_{1}^{\pm 1/2}, \ldots, q_{\dim \gpT}^{\pm 1/2} , \, \left\{\frac{1}{1-\prod_i q_i^{n_i/2}}\right\}_{\boldsymbol{n}\in \bZ^{\dim \gpT}\setminus \{0\}} \,\right]
	\end{equation*}
	labelled by curve classes $\beta$ in $Z$ such that under the change of variables $q_i= \re^{\epsilon_i}$ we have
	\begin{equation}
		\label{eq: generalised GV}
		\GWarg{Z}{\gpT}{\beta} = \sum_{%\substack{k\in\bZ_{> 0}\\
				k \vertspace \beta%}
			} \frac{1}{k} \, \Omega_{\beta/k}(q_i^k) \,.
	\end{equation}
	Moreover, $\Omega_{\beta}$ has coefficients in $\bZ$ if the $\gpT$-action on $Z$ is skeletal.
\end{customconj}

We will refer to the invariant $\Omega_{\beta}$ as the \define{\MemInd} of $Z$ in curve class $\beta$. This index has earlier been investigated in relation to K-theoretic Donaldson--Thomas theory by Nekrasov and Okounkov \cite{NO14:membranes}.

In this paper we prove \Cref{conj: generalised GV} for a special class of geometries.

\begin{customthm}{B} (\Cref{thm: main part generalised GV curve class loc sd})
	\label{thm: generalised GV loc sd}
	\Cref{conj: generalised GV} holds if the torus action on $Z$ is skeletal and \locsd\ and curve classes are supported away from \sd\ strata.
\end{customthm}

We call a torus action skeletal if the number of fixed points and one-dimensional orbits is finite. We say that such an action is \locsd\ if the weight decomposition of the tangent space at every fixed point of $Z$ features two torus weights which are opposite to each other (see \Cref{defn: assumptions T action}). The assumption on curve classes ensures that stable maps do not interact with strata of the one-skeleton of $Z$ on which the torus acts with opposite weights.

\subsection{A vertex formalism for locally \sd\ torus actions}
\Cref{thm: generalised GV loc sd} is proven by a direct evaluation of the Gromov--Witten series.

\begin{customthm}{C} (\Cref{cor: vertex formalism curve class})
	\label{thm: vertex formalism}
	The disconnected Gromov--Witten invariants of a Calabi--Yau fivefold $Z$ with a Calabi--Yau, skeletal and \locsd\ action by a torus $\gpT$ in a curve class $\beta$ supported away from \sd\ strata are computed via the topological vertex of Aganagic--Klemm--Mari\~{n}o--Vafa \cite{AKMV05:TopVert}:
	\begin{equation}
		\label{eq: vertex formula}
		\GWdiscarg{Z}{\gpT}{\beta} = \sum_{\boldsymbol{\mu}} ~ \prod_{e \in E(\Gamma)} E(e,\boldsymbol{\mu}) \prod_{v \in V(\Gamma)} \cW(v,\boldsymbol{\mu})\,.
	\end{equation}
\end{customthm}

Here, the sum ranges over tuples of partitions decorating the half-edges of the one-skeleton $\Gamma$ of $Z$. Modulo a change of basis one may think of these partitions as indicating a contact order of relative stable maps mapping to a degeneration of the one-skeleton. Every edge $e$ of the one-skeleton is weighted by an explicit monomial $E(e,\boldsymbol{\mu})$ in the exponentiated weights $q_i = \re^{\epsilon_i}$ in formula \eqref{eq: vertex formula}. Vertices $v$ are weighted by the three-leg topological vertex $\cW(v,\boldsymbol{\mu})$ of Aganagic--Klemm--Mari\~{n}o--Vafa \cite{AKMV05:TopVert}. \Cref{thm: generalised GV loc sd} then follows from \Cref{thm: vertex formalism} since both edge and vertex weights are rational functions in \smash{$q_i^{1/2}$} with constrained poles.

%The fact that one encounters at most three non-trivially decorated legs is due to the assumption that stable maps may not be supported on \sd\ strata. This assumption rules out two of the five possible legs at each vertex.

The reader might be surprised to encounter the topological vertex in our vertex formula \eqref{eq: vertex formula} as it is an object usually appearing in the Gromov--Witten theory of threefolds as opposed to fivefolds.  This is explained as follows: By our assumption that the torus action is \locsd\ a fivefold vertex weight reduces to a threefold one with the $\gpT$-weight of the attached \sd\ stratum taking the role of the genus counting variable.

This last comment is probably best illustrated in the situation where $Z$ is the product of a toric Calabi--Yau threefold $X$ with $\Aaff[2]$. We assume that the threefold is acted on by its two-dimensional Calabi--Yau torus $\gpT'$ and $\Gm$ is acting anti-diagonally on the affine plane:
\begin{equation*}
	\gpT=\gpT'\times \Gm \qquad \actson \qquad Z= X \times \Aaff[2] \,.
\end{equation*}
In this situation the Gromov--Witten invariants of $Z$ simply coincide with the ones of $X$ with the equivariant parameter $\epsilon$ associated with the weight-one representation of $\Gm$ taking the role of the genus counting variable:
\begin{equation}
	\label{eq: GW X times C2 sd limit}
	\GWarg{Z}{\gpT}{\beta} = \sum_{g \geq 0} (-\epsilon^2)^{g-1} \int_{[\Mbar_{g}(X,\beta)]^{\vir}_{\gpT'}}1\,.
\end{equation}
This fact is a consequence of Mumford's relation for the Chern classes of the Hodge bundle \cite{Mu83}. See \cite[Sec.~2.4]{BS24:refined} for more details. Consistent with this dimensional reduction, our vertex formula \eqref{eq: vertex formula} specialises to the original topological vertex formula \cite{AKMV05:TopVert} formalised in Gromov--Witten theory by Li, Liu, Liu and Zhou \cite{LLLZ09:MathTopVert}. See \Cref{sec: TV limit} for details.

The key idea in the proof of \Cref{thm: vertex formalism} is that one can employ the same dimensional reduction trick \eqref{eq: GW X times C2 sd limit} locally at each vertex of the one-skeleton of $Z$ under the assumption that there are two tangent directions with opposite $\gpT$-weights. The same idea was recently pursued by Yu--Zong \cite{YZ26:1LegOrbVertRefGW} in the context of the one-leg orbifold vertex.

\subsection{Relation to Gopakumar--Vafa invariants}
Let us also quickly comment on the specialisation of \Cref{conj: generalised GV} to the product case $Z=X \times \Aaff[2]$ with the action by $\gpT'\times \Gm$ we just discussed. Here, we do not necessarily assume $X$ to be toric. We claim that in this setting the conjecture specialises to a weak version of the Gopakumar--Vafa integrality conjecture \cite{GV98:MthTopStrI} which was recently proven by Ionel--Parker and Doan--Ionel--Parker \cite{IP18:GV,DIW21:GVfinite}.

Indeed, suppose \Cref{conj: generalised GV} holds for $X \times \Aaff[2]$. Then since by the MNOP conjecture \cite{MNOP1,Par23:UnivCurveCount} the equivariant Gromov--Witten invariants of $X$ are just numbers independent of any $\gpT'$-weights, the functions $\Omega_\beta$ must satisfy
\begin{equation*}
	\Omega_{\beta} \in \bZ\big[\tfrac{1}{2}\big]\left[  q^{\pm 1/2} , \big\{\big(1-q^{n/2}\big)^{-1}\big\}_{n\in\bZ\setminus\{0\}}\right]
\end{equation*}
where $q=\re^{\epsilon}$. Now if we \textit{additionally assume} that $\Omega_{\beta}$ has integer coefficients, that it only features integer powers of $q$, and that $\Omega_{\beta}$ can have at worst a double pole at $q=1$ and no other poles but at zero and infinity, we can expand
\begin{equation*}
	\Omega_{\beta} = \sum_{g = 0}^{g_{\text{max}}} n_{g,\beta} \left(q^{1/2}-q^{-1/2}\right)^{2g-2}
\end{equation*}
with $n_{g,\beta}\in\bZ$. Here, we also used that the Gromov--Witten series is invariant under $\epsilon\mapsto -\epsilon$. In combination with \eqref{eq: GW X times C2 sd limit}, this exactly yields the statement of the Gopakumar--Vafa conjecture:
\begin{equation*}
	\sum_{g \geq 0} (-\epsilon^2)^{g-1} \int_{[\Mbar_{g}(X,\beta)]^{\vir}_{\gpT'}}1 = \sum_{k\vertspace\beta}\sum_{g = 0}^{g_{\text{max}}} \frac{n_{g,\beta/k}}{k} \left(q^{k/2}-q^{-k/2}\right)^{2g-2}\,.
\end{equation*}
Hence, we see that, modulo denominators of two and additional constraints on poles and exponents of $q$, \Cref{conj: generalised GV} recovers the Gopakumar--Vafa integrality conjecture for products of Calabi--Yau threefolds with the affine plane when $\mgp{T}$ fixes the holomorphic threeform. When $\mgp{T}$ acts non-trivially on the holomorphic threeform, it is expected that for some Calabi--Yau threefolds the \MemInd\ recovers refined Go\-pa\-ku\-mar--Vafa invariants  \cite[Conj.~7.9]{BS24:refined}.

\subsection{Examples}
We apply our vertex formalism to the following geometries with the action by a torus meeting the assumptions of \Cref{thm: vertex formalism}:
\begin{enumerate}
	\item $\Tot{}_X \big(\cL \oplus \cL^\vee\big)$ where $\cL$ is a line bundle on a Calabi--Yau threefold $X$ \hfill (\S\ref{sec: globally sd limit})
	\item $\Tot{}_{\bP^1} \big( \cO(-2) \big)\times \Aaff[3]$\hfill (%\Cref{ex: A1 x C3 1,ex: A1 x C3 2,ex: A1 x C3 3,ex: A1 x C3 4} and
	\S\ref{sec: A1 x C3})
	\item Products of strip geometries with $\Aaff[2]$ \hfill(\S\ref{sec: strips})
	\item $\Tot{}_{\bP^1 \times \bP^1} \big(\cO(-1,0)^{\oplus 2} \oplus \cO(0,-2)\big)$\hfill (\S\ref{sec: RC x A1})
	\item $\Tot{}_{\bP^2} \big(\cO(-1)^{\oplus 3}\big)$ \hfill(\S\ref{sec: O minus 1 plus 3 on P2})
	\item $\Tot{}_{\bP^3} \big(\cO(-2)^{\oplus 2}\big)$ \hfill (\S\ref{sec: O min 2 oplus 2 on P3})
\end{enumerate}
Example (i) is a mild generalisation of the product situation $X\times \Aaff[2]$ discussed before. For (ii) and (iii) we are able to prove closed formulae for the Gromov--Witten series. Based on computer experiments we conjecture a closed form solution for the Gromov--Witten series of (iv) and predict structural properties satisfied by the \MemInds\ of (v) and (vi). See \Cref{conj: closed form RC times A1,conj: O minus 1 plus 3 on P2,conj: O min 2 oplus 2 on P3}.

Out of all geometries studied in this article, example (v) probably best showcases all features of \Cref{conj: generalised GV}. Most notably, for special torus actions the \MemInds\ of this Calabi--Yau fivefold do feature denominators of two. Moreover, in accordance with \Cref{conj: generalised GV} fractions of two seem indeed to be the worst type of denominators to occur. See \Cref{rmk: features of O min 1 oplus 3 on P2} for details.

\subsection{Outline of the paper}
The vertex formalism together with geometric preliminaries is presented in \Cref{sec: vertex formalism}. This section is aimed at a reader who is interested in applying the vertex formalism to a concrete example. We present several such applications in \Cref{sec: examples} while the proof of the vertex formalism is deferred to \Cref{sec: proof vertex formalism}. Our proof follows the idea of Li, Liu, Liu and Zhou \cite{LLLZ09:MathTopVert} of trading the weights in the graph sum resulting from torus localisation for relative Gromov--Witten invariants of partial compactifications of torus orbits in the one-skeleton of the target. This process is referred to as capped localisation in \cite{MOOP11:GWDTtoric}. The proof of \Cref{conj: generalised GV} in the setting where our vertex formalism applies is found in \Cref{sec: generalised GV}. Together with the observation that each weight in the vertex formula is a rational function with monic denominators and numerators having integer coefficients, our proof crucially uses that these features are preserved when taking the plethystic logarithm. This last fact is proven in \Cref{sec: PLog} following ideas of Konishi and Peng \cite{Kon06:Integrality,Pen07:ProofGVConj}.

\subsection{Context \& Prospects}

\subsubsection{Limitations}
Our proof of \Cref{conj: generalised GV} hinges on the fact that we are able to provide closed formulae for every factor in our vertex formula. For this all assumptions made in the statement of \Cref{thm: vertex formalism} are crucial: The torus action being skeletal allows us to use capped localisation. This assumption is therefore responsible for graph sum decomposition of the Gromov--Witten invariant. The assumption that the torus action is \locsd\ reduces vertex and edge terms from five to three dimensions. Together with the fact that the torus action is Calabi--Yau one can identify vertex contributions with the topological vertex weight and evaluate edge terms explicitly. To drop the assumption of being \locsd\ a better understanding of quintuple Hodge integrals and rubber integrals with four Hodge insertions will be required. First steps into this direction will be presented in \cite{GPS26:5Hodge}. See \Cref{rmk: loc sd assumption} for details. Let us also note that the requirement of being \locsd\ prevents us from applying our vertex formalism to interesting examples such as to products $X \times \Aaff[2]$ with a torus action that engineer supersymmetric gauge theories on $\Aaff[2]$ with \textit{general} $\Omega$ background, that is, beyond an anti-diagonal torus action on the affine plane (see \Cref{rmk: gauge th only sd limit}). Finally, the assumption on curve classes to be supported away from \sd\ strata is required to ensure that at most three-legged vertices occur. To drop this assumption, better control over the descendant threefold vertex is required. See \Cref{rmk: support 1,rmk: support 2} for details.

\subsubsection{M2-branes}
\label{sec: denominators of 2}
Let us quickly motivate how the speculation that $\gpT$-equivariant Gromov--Witten invariants of a Calabi--Yau fivefold $Z$ equate to the \smash{$\hat{A}$}-genus of the moduli space of M2-branes of $Z$ implies the statement of \Cref{conj: generalised GV}. In its most optimistic form, the speculation claims the existence of a sufficiently well-behaved moduli space \smash{$\MTwoMod_{\beta}(Z)$} labelled by curve classes in $Z$. The \smash{$\hat{A}$}-genus of this moduli space \smash{$\Omega_\beta = \hat{A}_{\gpT}\big(\MTwoMod_{\beta}(Z)\big)$} should then equate to the Gromov--Witten series via equation \eqref{eq: generalised GV}. Now by Hirze\-bruch--Riemann--Roch, the \smash{$\hat{A}$}-genus lifts to equivariant K-theory. It equates to the Euler characteristic of a square root of the (virtual) canonical bundle
\begin{equation*}
	\Omega_\beta = \chi_{\gpT} \left(\MTwoMod_{\beta}(Z), \cO_{\MTwoMod_{\beta}(Z)}^{\vir} \otimes K^{1/2}_{\vir}\right)\,.
\end{equation*}
In case $K^{1/2}_{\vir}$ is an honest line bundle on the $\gpT$-fixed locus of $\MTwoMod_{\beta}$ the last equality implies that $$\Omega_\beta\in K_{\gpT}(\mr{pt})_{\mr{loc}} \cong \bZ \left[ q_{1}^{\pm 1} , \ldots, q_{\dim \gpT}^{\pm 1} , \left\{(1-{\textstyle\prod_i} q_i^{n_i})^{-1}\right\}_{\boldsymbol{n}\in \bZ^{\dim \gpT}\setminus \{0\}}\right]\,.$$
There are, however, obstructions towards the existence of such a line bundle: First, it may only be well-defined after passing to a cover of $\gpT$ to allow for fractional characters. This is captured in \Cref{conj: generalised GV} by permitting square roots of $\mgp{T}$-characters. Second, the square root of $K_{\vir}$ may only be well-defined as an element in K-theory after inverting two: $\Omega_\beta \in K_{\gpT}(\mr{pt})_{\mr{loc}} \otimes \bZ[\frac{1}{2}]$ (cf.~\cite[Sec.~5.1]{OT23:DT4I}). This explains why we permit denominators of two in \Cref{conj: generalised GV}. Finally, Nekrasov and Okounkov argue that the (virtual) canonical bundle of $\MTwoMod_{\beta}(Z)$ relative to the Chow variety should indeed admit a square root in the Picard group. Now if the $\gpT$-action on $Z$ is skeletal then all fixed points of the Chow variety are isolated. As a consequence, in this setting we should find that $\Omega_\beta$ has integer coefficients. %Note that we precisely verify this prediction in \Cref{thm: generalised GV loc sd} for a special class of geometries.

%We also remark that in the main part of this paper we are going to prove a slightly more refined version of \Cref{conj: generalised GV}: We expect the existence of Nekrasov--Okoun\-kov invariants relative to the Chow variety
%\begin{equation*}
%	\widetilde{\Omega}_\beta \in K_{\gpT}\big( \mr{Chow}_{\beta}(Z) \big)_{\mr{loc}}
%\end{equation*}
%refining equation \eqref{eq: generalised GV}. We verify this expectation in the context of skeletal and \locsd\ torus actions in which the above \MemInds\ are supported on the isolated torus fixed points of the Chow variety. We prove a generalisation of \Cref{conj: generalised GV} for these more refined invariants in \Cref{thm: main part generalised GV loc sd}.

%These are the conceptual motivations for the features of \Cref{conj: generalised GV}. We refer the reader to \Cref{sec: O minus 1 plus 3 on P2} for a concrete example showcasing all these abstract features.\todo{Say that we prove result stronger than \Cref{thm: generalised GV loc sd} over Chow variety.}

\subsubsection{Denominators of two}
Let us expand on the occurrence of denominators of two. In \Cref{thm: vertex formalism} we indeed verify that for skeletal torus actions (satisfying certain additional conditions) \MemInds\ have integer coefficients. In \Cref{sec: O minus 1 plus 3 on P2} we consider the example of a fivefold whose \MemInds\ develop denominators of two precisely when the torus action turns non-skeletal. It appears worthwhile to investigate whether starting from our vertex formula \eqref{eq: vertex formula} one can argue combinatorially that factors of two are indeed the worst type of denominators that may appear under restriction of the torus action.

\subsubsection{Pandharipande--Zinger}
If the moduli space of stable maps to the Calabi--Yau fivefold $Z$ is proper for all genera and curve classes, then the denominators of $\Omega_\beta$ should be even more constrained: For \Cref{conj: generalised GV} to be compatible with the Gopakumar--Vafa integrality conjecture of Pandharipande--Zinger \cite[Conj.~1]{PZ10:CY5} in the non-equivariant limit one must have $\Omega_{\beta} \in \tfrac{1}{8}\bZ$. Whether there is any geometric relation between Pandharipande--Zinger and \MemInds\ is not clear to the author.

\subsubsection{Nekrasov--Okounkov}
Nekrasov and Okounkov conjecture that the generating series of K-theoretic stable pair invariants of a threefold $X$ with an appropriate insertion depending on the choice of two line bundles $\cL_4,\cL_5$ coincides with Laurent expansion of the M2-brane index of $Z=\Tot{}_X \, \cL_4 \oplus \cL_5$ \cite[Conj.~2.1]{NO14:membranes}. Here, the box-counting variable $q$ on the stable pairs side gets identified with the coordinate of $\Gm$ acting anti-diagonally on $\cL_4 \oplus \cL_5$. This conjecturally implies a maps/sheaves correspondence generalising the MNOP conjecture without insertions \cite{MNOP1,MNOP2}. Beyond the situation where the torus action on $X$ is Calabi--Yau, it is not immediately clear, however, how the vertex formalisms governing the respective sides of the correspondence can be related. In order to realise the K-theoretic vertex \cite{NO14:membranes,Arb21:KthDT,KOO21:2legKthVertex} governing the sheaf side in Gromov--Witten theory a better understanding of the stable maps vertex beyond \sd\ torus actions, i.e.~quintuple Hodge integrals, will be crucial. Even the limit of the refined topological vertex \cite{IKV09:RefVert} is currently out of reach in Gromov--Witten theory since it governs a limit where $\epsilon_i \rightarrow \pm \infty$. Accessing this limit would require finding an analytic continuation of the Gromov--Witten series in the torus weights which is currently out of reach. See \Cref{rmk: ref top vert A1 x C3,rmk: ref top vert strip} for comparisons of our vertex formalism with the refined topological vertex in concrete examples.

Conversely, the vertex formalism of \Cref{thm: vertex formalism} is hard to realise in Donaldson--Thomas theory since in general it would require the specialisation of the box-counting variable $q$ to equivariant variables locally at the vertices. This operation is only well-defined after passing to an analytic continuation in $q$ which is currently out of reach beyond the two-leg vertex \cite{KOO21:2legKthVertex}.

\subsubsection{M5-branes}
Suppose now that $Z$ is a toric Calabi-Yau fivefold with a Hamiltonian action by a torus meeting the requirements of \Cref{thm: vertex formalism}. We may factor the affine neighbourhood of a torus fixed point of $Z$ into $\Aaff[3]\times \Aaff[2]$ so that the torus action on both factors is Calabi--Yau. Now suppose that in such an affine neighbourhood we are given a submanifold $L\times \Aaff[1]\times \{0\}$ where $L$ is an Aganagic--Vafa Lagrangian submanifold of $\Aaff[3]$ intersecting a non-compact stratum of $Z$. Then following the arguments of Fang and Liu \cite{FL13:OpenGWtorCY3} one should be able to prove that stable maps from Riemann surfaces with boundary mapping to $L\times \Aaff[1]\times \{0\}$ are enumerated by a vertex formalism similar to \Cref{thm: vertex formalism}. As a consequence the generating series of these open Gromov--Witten invariants should be governed by index type invariants. Such invariants would generalise LMOV invariants \cite{LM01:PolyInvTorKnotTopStr,OV00:KnotInvTopStr,LMV00:KnotsLinksBranes,MV02:FramedKnotsLargeN} and should probably admit an interpretation as indices of M2-M5-bound states. With the methods of \cite{Yu23:BPS} one should be able to prove integrality of these invariants for skeletal and \locsd\ torus actions analogous to \Cref{thm: generalised GV loc sd}.

\subsubsection{Higher dimensions}
By the assumption that each affine neighbourhood of a torus fixed point factors into $\Aaff[3]\times\Aaff[2]$ with the torus acting anti-diagonally on the second factor, the Gromov--Witten vertex reduces from five to three dimensions. Since the threefold vertex admits an explicit formula this observation is the key insight that allows us to prove \Cref{thm: vertex formalism} from which we deduce \Cref{thm: generalised GV loc sd}. One can apply the same trick in arbitrarily high odd dimension. Suppose $Z$ is of dimension $3+2n$ with a skeletal and Calabi--Yau action by a torus $\gpT$ so that locally at every torus fixed point $Z$ looks like $\Aaff[3]\times\Aaff[2] \times \dots \times \Aaff[2]$ with the induced $\gpT$-action on each factor being Calabi--Yau. Under this assumption the generating series of Gromov--Witten invariants of $Z$ can again be evaluated via the topological vertex. As a consequence the generating series admits a presentation as a rational function in variables of the form $\exp (\prod_i \epsilon_i / \prod_j \epsilon'_j)$ where $\epsilon_i$ and $\epsilon'_j$ are torus weights. As in the statement of \Cref{conj: generalised GV} the poles of these rational functions are highly constrained. It is tempting to wonder whether this feature persists for arbitrary torus actions. The author is, however, unaware of any geometry $Z$ showcasing such a feature beyond dimension five.

\subsection*{Acknowledgements}
\begin{sloppypar}
The author benefited from discussions with Alessandro  \mbox{Giacchetto}, Daniel Holmes, Davesh Maulik and Andrei Okounkov. The author was supported by the DFG Walter Benjamin Fellowship 576663726 and the SNF grant SNF-200020-21936.
\end{sloppypar}

%-------------------------------------------------------------------------------

%-------------------------------------------------------------------------------

\section{The vertex formalism}
\label{sec: vertex formalism}
%In this section we will introduce the geometric setup our vertex formalism will apply to and fix our the notation for Gromov--Witten invariants and their generating series.

\subsection{Geometric preliminaries}
Let $Z$ be a smooth quasi-projective Calabi--Yau variety with the action by a torus $\gpT \cong \Gm[m]$. Most of the time we will need to impose the following assumptions on the torus action.

\begin{defn}
	\label{defn: assumptions T action}
	We say that a $\gpT$-action on $Z$ is
	\begin{itemize}
		\item \define{skeletal} if the number of fixed points and one-dimensional orbits is finite;
		
		\item \define{Calabi--Yau} if the action fixes the holomorphic volume form;
		
		\item \define{\locsd} if for each connected component $F$ of the fixed locus of $Z$ there appear at least two non-zero, opposite $\gpT$-weights in the weight decomposition of the normal bundle of $F$ in $Z$.
	\end{itemize}
\end{defn}

For instance, if $Z$ is toric then the action by its dense torus $\widehat{\mgp{T}}$ is skeletal. However, the action by this torus is generally not Calabi--Yau. Only once we pass to a suitable codimension one subtorus \smash{$\mgp{T}\subset \widehat{\mgp{T}}$} the induced action by this torus will be Calabi--Yau. In the context of toric varieties we will often refer to such a torus as the \define{Calabi--Yau torus} of $Z$.

Suppose now we are provided with a skeletal $\gpT$-action on $Z$. We will associate a decorated graph $\Gamma$ to $(Z,\gpT)$ which we call the $\gpT$\define{-diagram} of $Z$. It records the geometry locally around the one-skeleton of $Z$:
\begin{itemize}
	\item vertices $v\in V(\Gamma)$ correspond to $\gpT$-fixed points $p_v$ of $Z$;
	
	\item edges $e \in E(\Gamma)$ correspond to compact $\gpT$-preserved lines $C_e$ each connecting two fixed points;
	
	\item leafs $l\in L(\Gamma)$ correspond to non-compact $\gpT$-preserved lines $C_l$ containing one fixed point;
	
	\item half-edges $h= (v,e) \in H(\Gamma) \cong E(\Gamma)^2 \sqcup L(\Gamma)$ are decorated with the $\gpT$-weight
	\begin{equation*}
		\epsilon_h \coloneqq c_1(T_{p_v} C_e) \in \hhh^2_{\gpT}(\mr{pt},\bZ)
	\end{equation*}
	of the induced torus action on the tangent space of $C_e$ at the fixed point $p_v$.
\end{itemize}

With this notation at hand we remark that a skeletal $\gpT$-action on $Z$ is Calabi--Yau if and only if $0 = \sum_{h\ni v} \epsilon_h$ for any (and thus every) fixed point $v\in V(\Gamma)$. The action is \locsd\ if for every vertex $v$ there exist two half-edges $h\neq h'$ adjacent to $v$ with $\epsilon_h = - \epsilon_{h'}$.

In case the action of the maximal compact subgroup $\gpT_{\bR} \subset \gpT$ on $Z$ is Hamiltonian (with respect to some sufficiently generic symplectic form on $Z$) the moment map
\begin{equation*}
	\mu : Z \longrightarrow \mr{Lie} \gpT_{\bR} \cong \hhh^2_{\gpT}(\mr{pt},\bR) \cong \bR^m
\end{equation*}
provides us with an embedding of $\Gamma$ into $\bR^m$. As in the following example we will often use this embedding for a better visualisation.

\begin{example}
	\label{ex: A1 x C3 1}
	Let us illustrate the setup in a concrete example:
	\begin{equation*}
		Z = \Tot{}_{\bP^1} \big( \cO(-2) \big) \times \Aaff[3]\,.
	\end{equation*}
	Let $\widehat{\gpT} \cong \Gm[5]$ be a dense torus of $Z$. We assume that it acts on the coordinate lines of $\Aaff[3]$ with tangent weights $\epsilon_3,\epsilon_4,\epsilon_5$ respectively. Moreover, we denote by $\epsilon_1$ the tangent weight at $0\in \bP^1 \subset Z$ and finally by $-\epsilon_2$ the $\widehat{\gpT}$-weight on the holomorphic two-form of $\Tot{}_{\bP^1} ( \cO(-2) )$.
	
	The fivefold $Z$ has two fixed points $0,\infty \in \bP^1 \subset Z$ with tangent weights
	\begin{equation*}
		(\epsilon_1, \epsilon_2-\epsilon_1,\epsilon_3,\epsilon_4,\epsilon_5) \qquad \text{and} \qquad (-\epsilon_1, \epsilon_2+\epsilon_1,\epsilon_3,\epsilon_4,\epsilon_5)
	\end{equation*}
	respectively. If we pass to a subtorus $\gpT\cong \Gm[4]$ where the relation $\sum_{i=2}^5 \epsilon_i = 0$ holds then the $\gpT$-action on $Z$ is Calabi--Yau. However, this torus action is not \locsd. To get such an action we have to impose additional constraints for which we have essentially two options: (A) one imposes that $\epsilon_i = -\epsilon_j$ or (B) that $\epsilon_i = \epsilon_1 - \epsilon_2$ and $\epsilon_j = - \epsilon_1 - \epsilon_2$ for some $i,j\in\{3,4,5\}$. We denote the three-dimensional, respectively two-dimensional, subtori realising these constraints by $\gpT_{\mr{A}}$ and $\gpT_{\mr{B}}$. One checks that the action by these tori is indeed \locsd. See \Cref{fig: A1 x C3} for an illustration of the two torus diagrams.
\end{example}

\begin{figure}
	\centering
	\begin{tikzpicture}[scale=1.1]
		\draw (-1,-1) -- (0,0) -- (3,0) -- (4,-1);
		\draw[Cyan] (0,0) -- (0,1.4);
		\draw[Cyan] (3,0) -- (3,1.4);
		\draw[Green] (0,0) -- (-0.5,1.2);
		\draw[Green] (3,0) -- (2.5,1.2);
		\node[Green] at (-0.25,0.6) {$\circ$};
		\node[Green,left] at ($(0,0)!0.75!(-0.5,1.2)$) {$\epsilon_i$};
		\node[Green] at (2.75,0.6) {$\circ$};
		\node[Green,left] at ($(3,0)!0.75!(2.5,1.2)$) {$\epsilon_i$};
		\draw[BurntOrange] (0,0) -- (0.5,-1.2);
		\node[BurntOrange,left] at ($(0,0)!0.75!(0.5,-1.2)$) {$\epsilon_j$};
		\draw[BurntOrange] (3,0) -- (3.5,-1.2);
		\node[BurntOrange,left] at ($(3,0)!0.75!(3.5,-1.2)$) {$\epsilon_j$};
		\node[circle,inner sep=2pt,fill=black] at (0,0){};
		\node[circle,inner sep=2pt,fill=black] at (3,0){};
		\node at (1.5,-2) {(A)}; 
		\draw[white] (7,0) -- (7,1.4);
		\draw (6,-1) -- (7,0) -- (10,0) -- (11,-1);
		\draw[Cyan] (7,0) -- (7,-1.4);
		\draw[Cyan] (10,0) -- (10,-1.4);
		\draw[Green] (7,0) -- (8,1);
		\node[Green] at (7.5,0.5) {$\circ$};
		\node[Green,left] at ($(7,0)!0.75!(8,1)$) {$\epsilon_i$};
		\draw[Green] (10,0) -- (11,1);
		\node[Green,left] at ($(10,0)!0.75!(11,1)$) {$\epsilon_i$};
		\draw[BurntOrange] (7,0) -- (6,1);
		\node[BurntOrange,left] at ($(7,0)!0.75!(6,1)$) {$\epsilon_j$};
		\draw[BurntOrange] (10,0) -- (9,1);
		\node[BurntOrange] at (9.5,0.5) {$\circ$};
		\node[BurntOrange,left] at ($(10,0)!0.75!(9,1)$) {$\epsilon_j$};
		\node[circle,inner sep=2pt,fill=black] at (7,0){};
		\node[circle,inner sep=2pt,fill=black] at (10,0){};
		\node at (8.5,-2) {(B)}; 
	\end{tikzpicture}
	\caption{Illustration of (A) the embedding of the $\gpT_{\mr{A}}$-diagram in $\mr{Lie} \gpT_{\mr{A},\bR} \cong \bR^3$ and (B) the $\gpT_{\mr{B}}$-diagram in $\mr{Lie} \gpT_{\mr{B},\bR} \cong \bR^2$. The blue, green and orange lines indicate the three torus preserved coordinate lines of $\Aaff[3]$. The circle highlights the choice of a \distdir\ at each vertex as will be introduced in \Cref{sec: sign and order}.}
	\label{fig: A1 x C3}
\end{figure}
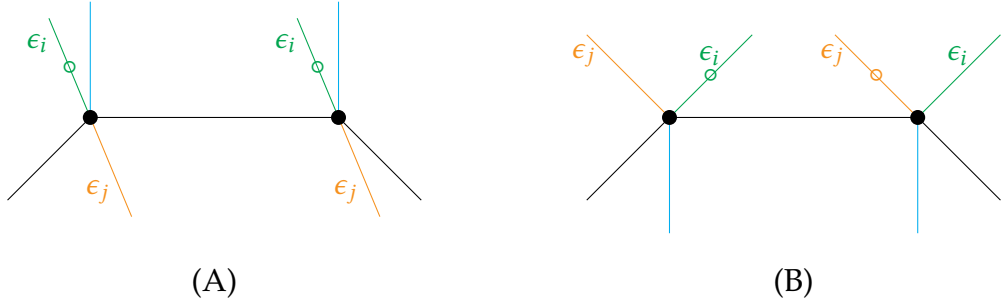

\subsection{Gromov--Witten invariants}
\label{sec: GW invariants}
From now on let $Z$ always denote a Calabi--Yau fivefold together with a skeletal, Calabi--Yau and \locsd\ action by a torus $\gpT$. We consider the $\gpT$-equivariant genus-$g$ Gromov--Witten invariants of $Z$ in curve class $\beta$:
\begin{equation*}
	\GWarg{Z}{\gpT}{g,\beta} \coloneqq \int_{[\Mbar_{g}(Z,\beta)]^{\vir}_{\gpT}} 1 \,.
\end{equation*}
In case the moduli space is not proper this invariant is defined as a $\gpT$-equivariant residue assuming that the $\gpT$-fixed locus is proper.%:
%\begin{equation*}
%	\int_{[\Mbar_{g}(Z,\beta)]^{\vir}_{\gpT}} 1 = \sum_{F_\gamma\subseteq \Mbar_{g}(Z,\beta)^{\gpT}} \int_{[F_\gamma]^{\vir}} \frac{1}{e(N_{F_\gamma,\Mbar_{g}(Z,\beta)}^{\vir})}.
%\end{equation*}
%For a skeletal torus action the connected components of $\Mbar_{g}(Z,\beta)^{\gpT}$ are labelled by certain decorated graphs $\gamma$ whose vertices correspond to components of the domain getting contracted to a fixed point and edges correspond to rational covers of the $\gpT$-preserved lines fully ramified over the fixed points.

The invariants $\GWarg{Z}{\gpT}{g,\beta}$ are rational functions in the torus weights of homogenous degree $2g-2$. Hence, by the degree grading
\begin{equation*}
	\GWarg{Z}{\gpT}{\beta} \coloneqq \sum_{g \geq 0} \GWarg{Z}{\gpT}{g,\beta}
\end{equation*}
is a well-defined formal series. Moreover, we denote
\begin{equation*}
	\GWarg{Z}{\gpT}{} \coloneqq \sum_{\beta \neq 0} Q^\beta ~ \GWarg{Z}{\gpT}{\beta}
\end{equation*}
where the sum runs over all effective curve classes of $Z$ and we write
\begin{equation*}
	1+\sum_{\beta \neq 0} Q^\beta ~ \GWdiscarg{Z}{\gpT}{\beta} \coloneqq \GWdiscarg{Z}{\gpT}{} \coloneqq \exp \GWarg{Z}{\gpT}{}
\end{equation*}
for the generating series of disconnected invariants.

Virtual localisation \cite{GP97:virtloc} decomposes the Gromov--Witten invariants of $Z$ into contributions of the individual $\gpT$-fixed loci of the moduli space:
\begin{equation}
	\label{eq: GW torus localisation}
	\GWdiscarg{Z}{\gpT}{\beta} = \sum_{g \geq 0} \int_{[\Mbar{}_{g}^\bullet(Z,\beta)]^{\vir}_{\gpT}} 1 = \sum_{\gamma} \int_{[F_{\gamma}]^{\vir}} \frac{1}{e_{\gpT}(N^{\vir}_\gamma)}\,.
\end{equation}
The fixed loci are labelled by decorated graphs $\gamma$ whose vertices correspond to components of the domain curve being contracted to a fixed point and edges correspond to rational covers of torus preserved lines fully ramified over the fixed points. Edges are decorated by the degree of their associated cover. We denote by $d_e$ the degree of the covering of the torus orbit in $Z$ labelled by $e\in E(\Gamma)$. This yields an assignment $\boldsymbol{d}:E(\Gamma) \rightarrow \bZ_{\geq 0}$ which is subject to condition $\beta = \sum_e d_e [C_e]$. We call $\boldsymbol{d}$ the \define{skeletal degree} of $\gamma$. We write
\begin{equation*}
	\GWdiscarg{Z}{\gpT}{\boldsymbol{d}} \coloneqq \sum_{\substack{\gamma \text{ has skeletal} \\ \text{degree }\boldsymbol{d}}} ~~ \int_{[F_{\gamma}]^{\vir}} \frac{1}{e_{\gpT}(N^{\vir}_\gamma)} \,.
\end{equation*}
for the partial contribution of the fixed loci with fixed skeletal degree to the overall Gromov--Witten invariant. We use a similar notation to denote partial contributions of connected invariants as well.

\subsection{Combinatorial preparations}
Our vertex formalism will depend on a certain combinatorial choice which we will describe now.

\subsubsection{The choice of \distdir s and orders}
\label{sec: sign and order}
Let $\Gamma$ be the $\gpT$-diagram of $Z$. For each vertex $v\in V(\Gamma)$ we make the following choices. By our assumption that the torus action is \locsd\ there must be (at least) two half-edges $h,h'$ at $v$ whose weights are opposite: $\epsilon_h = - \epsilon_{h'}$. We will call them the \define{\sd\ half-edges} at $v$. Out of the two, we choose a \define{\distdir}  $h_v \in \{h,h'\}$ and write
\begin{equation*}
	\epsilon_v \coloneqq \epsilon_{h_v}\,.
\end{equation*}
Moreover, we fix a \define{cyclic order} $\sigma_v$ of the remaining three half-edges adjacent to $v$. Collectively, we will refer to such a choice $(h_v,\sigma_v)_{v\in V(\Gamma)}$ for every vertex as a choice of \distdir s and orders.

\begin{example}
	\label{ex: A1 x C3 2}
	Let us revisit \Cref{ex: A1 x C3 1} where we considered $Z=\Tot{}_{\bP^1} \big( \cO(-2) \big) \times \Aaff[3]$. We identified two tori $\gpT_{\mr{A}}$ and $\gpT_{\mr{B}}$ whose action on $Z$ is Calabi--Yau and \locsd. The planar embedding of the torus diagrams presented in \Cref{fig: A1 x C3} naturally provides us with a choice of cyclic order after fixing an orientation of $\bR^2$. For the torus $\gpT_{\mr{A}}$ we may choose the same \distdir\ $\epsilon_v = \epsilon_i$ at both fixed points. For $\gpT_{\mr{B}}$ we choose $\epsilon_i$ at $0$ and $\epsilon_j$ at $\infty$. As in \Cref{fig: A1 x C3} we will indicate the choice of a \distdir\ by a circle on the associated stratum.
\end{example}

\subsubsection{Framing and \mxp} % \frmxp
Suppose we fixed a choice of \distdir s and orders. Then to every edge $e=(h,h')$ none of whose half-edges is an \sd\ half-edge we assign what we call a \define{\frp} $f_e\in \bZ/2\bZ$ and a \define{\mxp} $p_e\in \bZ/2\bZ$: Suppose $e$ links the vertices $v$ and $v'$. Then the weights $\epsilon_h$, $\epsilon_{\sigma_v(h)}$, $\epsilon_v$, $\epsilon_{\sigma_{v'}(h')}$ and $\epsilon_{v'}$ either satisfy linear relations
\begin{equation}
	\label{eq: weight reln 1}
	\epsilon_{\sigma_v(h)} = (-1)^{p_1}\epsilon_{\sigma_{v'}(h')} + f_1 \epsilon_h \qquad \text{and} \qquad \epsilon_{v} = (-1)^{p_2}\epsilon_{v'} + f_2 \epsilon_h
\end{equation}
or
\begin{equation}
	\label{eq: weight reln 2}
	\epsilon_{\sigma_v(h)} = (-1)^{p_1}\epsilon_{v'} + f_1 \epsilon_h \qquad \text{and} \qquad \epsilon_{v} = (-1)^{p_2}\epsilon_{\sigma_{v'}(h')} + f_2 \epsilon_h
\end{equation}
for some $f_1,f_2\in \bZ$ and $p_1,p_2\in\bZ/2\bZ$. We define
\begin{equation*}
	f_e \equiv f_1+ f_2  \,,\qquad p_e \equiv p_1 + p_2\,.
\end{equation*}

There is an alternative characterisation of the \mxp\ which is often more practical: The normal bundle of the line $C_e$ splits as
\begin{equation*}
	N_{C_e} Z \cong \cO_{\bP^1}(a_1) \oplus \cO_{\bP^1}(a_2) \oplus \cO_{\bP^1}(a_3) \oplus \cO_{\bP^1}(a_4)\,.
\end{equation*}
Every factor can be associated with one of the half-edges other than $h$ and $h'$ at each of the fixed points $v$ and $v'$. Hence, the splitting induces a bijection
\begin{equation*}
	\alpha: \big\{h_v,h_{\sigma(h)},h_v',h_{\sigma^2(h)}\big\} \isomrightarrow \big\{1,2,3,4\big\} \isomrightarrow \big\{h_{v'},h_{\sigma(h')},h_{v'}',h_{\sigma^2(h')}\big\}
\end{equation*}
between the half-edges at $v$ and $v'$. Then the \mxp\ of $e$ can be identified with the cardinality
\begin{equation*}
	p_e \equiv \big|\alpha\big(\{h_v,h_{\sigma(h)}\}\big) \cap \{h_{v'},h_{\sigma(h')}\}\big| \,.
\end{equation*}

\begin{example}
	\label{ex: A1 x C3 4}
	We continue our running example $Z=\Tot{}_{\bP^1} \big( \cO(-2) \big) \times \Aaff[3]$. There is one compact edge $e$ associated with the zero section $\bP^1 \subset Z$. One checks that in both cases (A) and (B) $f_e$ is even while $p_e$ is odd.
\end{example}

\subsection{Diagrammatic rules}
In this section we will state the diagrammatic rules that allow the evaluation of the Gromov--Witten invariants  $\GWdiscarg{Z}{\gpT}{\boldsymbol{d}}$ after having fixed a choice of \distdir s and orders. There is, however, a technical condition we need to impose on the skeletal degree $\boldsymbol{d}$.

\begin{defn}
	\label{defn: curve class condition}
	We say that $\boldsymbol{d}\in\bZ_{\geq 0}^{E(\Gamma)}$ is \define{supported away from \sd\ strata} if $d_e=0$ whenever at least one of the half-edges $h$, $h'$ of $e=(h,h')$ is an \sd\ half-edge.
\end{defn}

\begin{example}
	\label{ex: A1 x C3 3}
	With the \distdir\ and order we fixed in \Cref{ex: A1 x C3 2} for our running example we can infer from \Cref{fig: A1 x C3} that in both cases (A) and (B) any multiple of the zero section is supported away from \sd\ strata as all \sd\ half-edges are non-compact legs.
\end{example}

\subsubsection{Partition labels}
We are now ready to state the rules of our vertex formalism. We decorate each half-edge $h$ of the $\gpT$-diagram $\Gamma$ with a partition $\mu_h$ subject to the following conditions:
\begin{itemize}
	\item $\mu_h = \varnothing$ whenever $h$ is a leaf or an \sd\ half-edge;
	
	\item for each compact edge $e=(h,h')\in E(\Gamma)$ we have $\mu_h = \mu_{h'}$ if the \mxp\ is even and $\mu_h = \mu_{h'}^{\transpose}$ otherwise;
	
	\item for all edges $e=(h,h')$ we have $|\mu_h| = |\mu_{h'}| = d_e$.
\end{itemize}
We denote the set of all decorations of half-edges by partitions satisfying the above conditions by $\cP_{\Gamma, \boldsymbol{p}, \boldsymbol{d}}$. Note that through the \mxp , the second condition depends on the choice of \distdir s and orders we fixed.

\subsubsection{Edge weights}
Given a partition label $\boldsymbol{\mu} \in \cP_{\Gamma, \boldsymbol{p}, \boldsymbol{d}}$ we assign a weight to each edge $e$ as follows: Denote the half-edges associated to this edge by $h=(v,e)$ and $h'=(v',e)$. Then $e$ gets assigned the weight
\begin{equation*}
	E(e,\boldsymbol{\mu}) \coloneqq (-1)^{(f_e+p_e)|\mu_h|} ~ \exp \left(\frac{\kappa({\mu_h})}{2} \, \frac{\epsilon_v \epsilon_{\sigma(h)} + (-1)^{p_e+1} \epsilon_{v'} \epsilon_{\sigma(h')}}{\epsilon_h}\right)
\end{equation*}
where $\kappa(\mu) \coloneqq \sum_{i=1}^{\ell(\mu)} \mu_i (\mu_i - 2i + 1)$ denotes the second Casimir invariant.

\subsubsection{Vertex weights}
Given three partitions $\mu_1$, $\mu_2$ and $\mu_3$ we introduce the topological vertex function
\begin{equation}
	\label{eq: TV formula}
	\cW_{\mu_1,\mu_2,\mu_3}(q) = q^{\kappa(\mu_1)/2} s_{\mu_3\,}\!\big(q^\rho\big) \sum_{\nu} s_{\frac{\mu_1^{\transpose}}{\nu}} \!\big(q^{\rho+\mu_3}\big) \, s_{\frac{\mu_2}{\nu}\,} \!\big(q^{\rho+\mu_3^{\transpose}}\big)\,.
\end{equation}
Here, $s_{\alpha/\beta}(q^{\rho+\gamma})$ denotes the skew Schur function $$s_{\frac{\alpha}{\beta}}(x_1,x_2,\ldots)$$ evaluated at $x_i = q^{-i+1/2+\gamma_i}$. A priori, this makes \smash{$s_{\alpha/\beta}(q^{\rho+\gamma})$} a formal Laurent series in \smash{$q^{-1/2}$}. It can, however, be shown that the series converges to a rational function implying that also \eqref{eq: TV formula} is a rational function in \smash{$q^{1/2}$}. (We will recall this fact in more detail in the proof of \Cref{thm: main part generalised GV loc sd}.)

Now given a partition label $\boldsymbol{\mu} \in \cP_{\Gamma, \boldsymbol{p}, \boldsymbol{d}}$ we assign a weight to each vertex $v$ as follows: Remember that we fixed a cyclic permutation $\sigma_v = (h_1 \, h_2 \, h_3)$ of three half-edges at $v$ and that our choice of \distdir\ singled out a distinct torus weight $\epsilon_v$. The weight we assign to $v$ is
\begin{equation*}
	\cW(v,\boldsymbol{\mu}) \coloneqq \cW_{ \mu_{h_1},\mu_{h_2},\mu_{h_3} }(\re^{\epsilon_v})\,.
\end{equation*}
This assignment is well defined since the topological vertex function is invariant under cyclic permutations of partitions.

\subsection{The vertex formula}
With these diagrammatic rules at our disposal we are finally able to state the first main result of this paper.
\begin{thm}
	\label{thm: vertex formalism main part}
	Let $Z$ be a Calabi--Yau fivefold with a skeletal, Calabi--Yau and \locsd\ action by a torus $\gpT$. Fix a choice of \distdir s and orders. Then for all skeletal degrees $\boldsymbol{d}$ supported away from \sd\ strata we have
	\begin{equation}
		\label{eq: vertex formula main part}
		\GWdiscarg{Z}{\gpT}{\boldsymbol{d}} = \sum_{\boldsymbol{\mu}\in \cP_{\Gamma, \boldsymbol{p}, \boldsymbol{d}} } ~ \prod_{e \in E(\Gamma)} E(e,\boldsymbol{\mu}) \prod_{v \in V(\Gamma)} \cW(v,\boldsymbol{\mu})\,.
	\end{equation}
\end{thm}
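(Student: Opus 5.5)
The proof follows the strategy of Li--Liu--Liu--Zhou \cite{LLLZ09:MathTopVert}, i.e.\ capped localisation. First apply virtual localisation \eqref{eq: GW torus localisation}. Then group the fixed loci $\gamma$ of skeletal degree $\boldsymbol{d}$ according to their restriction to each vertex $v$ and each edge $e$ of $\Gamma$.

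Because $\boldsymbol{d}$ is supported away from \sd\ strata, at each vertex $v$ only the three half-edges in the cyclic order $\sigma_v$ carry nonzero degree. The contracted components at $p_v$ therefore produce triple-type Hodge integrals with boundary partitions on three legs. The normal directions $\epsilon_v$ and $-\epsilon_v$ contribute $e(\mathbb{E}^\vee\otimes\epsilon_v)e(\mathbb{E}^\vee\otimes(-\epsilon_v))$. By Mumford's relation $c(\mathbb{E})c(\mathbb{E}^\vee)=1$ this factor equals $(-\epsilon_v^2)^{g}$, up to the usual $\epsilon_v$-factors from the node and normal-bundle terms. Hence the fivefold vertex integrand becomes the threefold (Calabi--Yau, since the three remaining weights sum to zero) triple Hodge integral. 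Its genus expansion is governed by $-\epsilon_v^2$, exactly as in \eqref{eq: GW X times C2 sd limit}.

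To avoid matching edge and vertex factors by hand, I will use capped localisation. Degenerate $Z$ along each compact edge $C_e$, so that $C_e$ is replaced by the relative geometry of a $\bP^1$-bundle with normal bundle $\bigoplus_{i}\cO(a_i)$ relative to its two fibres at infinity. The degeneration formula then expresses $\GWdiscarg{Z}{\gpT}{\boldsymbol{d}}$ as a sum over relative conditions $\eta_h$, with gluing factors $\mathfrak{z}(\eta)\prod\epsilon_h^{\ell(\eta)}$-type terms. The summands are products of \emph{capped vertices}, namely relative invariants of the local geometry at $p_v$ capped off by the three legs, and \emph{capped edges}, namely relative invariants of $\bP^1\times\Aaff[4]$-type with framings $f_e$.

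For the capped vertex I apply the dimensional reduction. Relative localisation reduces it to the formal threefold relative vertex in genus variable $-\epsilon_v^2$ with framing fixed by the weights. Using LLLZ's identification of the formal relative vertex with $\cW_{\mu_1,\mu_2,\mu_3}$, after the change of basis from $\eta$ (relative conditions) to $\mu$ (Schur/character basis, with $\chi_\mu(\eta)/\mathfrak{z}(\eta)$), this gives $\cW(v,\boldsymbol{\mu})$ with $q=\re^{\epsilon_v}$. For the capped edge, the rank-four normal bundle pairs with the two \sd-compatible directions through \eqref{eq: weight reln 1} or \eqref{eq: weight reln 2}, so the Hodge factors again reduce via Mumford. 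The edge becomes the threefold framed tube/rubber contribution, which in the $\mu$-basis is diagonal and equals $(-1)^{f|\mu|}\exp(\kappa(\mu)\,\cdot\,\text{framing}/2)$ by the ELSV/cut-and-join analysis of double Hodge integrals. The framing in the exponent must be computed as $(\epsilon_v\epsilon_{\sigma(h)}\pm\epsilon_{v'}\epsilon_{\sigma(h')})/\epsilon_h$.

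The main obstacle I anticipate is this edge step. The distinguished weight $\epsilon_v$ differs at the two ends, so the genus variables $\epsilon_v^2$ and $\epsilon_{v'}^2$ must be reconciled. The relations \eqref{eq: weight reln 1} and \eqref{eq: weight reln 2} show that they agree modulo $\epsilon_h$. In the mixed case \eqref{eq: weight reln 2}, the roles of the distinguished and ordered directions swap, which I will argue transposes the partition and produces the sign $(-1)^{p_e|\mu|}$ via the standard identity $\cW$ under $q\mapsto q^{-1}$, $\mu\mapsto\mu^{\transpose}$. I would verify this first on the local model $\Tot_{\bP^1}(\cO(-2))\times\Aaff[3]$ in both cases (A) and (B), checking agreement with a direct localisation computation in low degree, before writing the general gluing argument.
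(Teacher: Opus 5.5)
Your overall strategy is the paper's: capped localisation; the capped vertex $U\times\mathbb{C}^2$ reduced by Mumford's relation to the threefold relative vertex; rubber integrals evaluated via double Hurwitz numbers; character orthogonality to assemble. (For three non-empty legs, the identification of the threefold relative vertex with $\cW$ comes from the relative GW/DT correspondence of Maulik--Oblomkov--Okounkov--Pandharipande combined with the Okounkov--Reshetikhin--Vafa vertex.)

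The gap is the capped edge, which is the genuinely five-dimensional part. Its geometry is a rank-four bundle $V\to\mathbb{P}^1$ with fibre weights $(\pm\epsilon_{\sigma(h)},\pm\epsilon_v)$ over $0$ and $(\pm\epsilon_{\sigma(h')},\pm\epsilon_{v'})$ over $\infty$. Because the anti-diagonal pair changes along the edge, $V$ is not a threefold framed tube times $\mathbb{C}^2$, so you must localise on $V$ itself. This gives rubber integrals at both ends and a middle factor $B(\lambda)$ from the fully ramified degree-$\lambda_i$ covers of the zero section, which is a ratio of eight Gamma functions.

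The rubber ends are unproblematic. Each fibre consists of two opposite pairs, so Mumford applies twice and the rubber formula gives $(\epsilon_{\sigma(h)}\epsilon_v)^{-\ell(\nu_h)-\ell(\lambda)}\sum_\mu\exp\bigl(\tfrac{\kappa(\mu)}{2}\tfrac{\epsilon_{\sigma(h)}\epsilon_v}{\epsilon_h}\bigr)\tfrac{\chi_\mu(\nu_h)\chi_\mu(\lambda)}{\mathfrak{z}(\nu_h)\mathfrak{z}(\lambda)}$, with the analogue at $\infty$. There is nothing to ``reconcile'': the fivefold series has no genus variable, and the two ends just supply the two summands in the exponent of $E(e,\boldsymbol{\mu})$.

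What you are missing is the evaluation of $B(\lambda)$. By $\Gamma(z)\Gamma(1-z)=\pi/\sin(\pi z)$ it equals $(\epsilon_{\sigma(h)}\epsilon_v\epsilon_{\sigma(h')}\epsilon_{v'})^{-\ell(\lambda)}(\prod_i\lambda_i)^{-1}$ times a product of sine ratios. This is where \eqref{eq: weight reln 1}, \eqref{eq: weight reln 2} (your ``agreement modulo $\epsilon_h$'') are actually used: they collapse each ratio to $(-1)^{p_e+f_e\lambda_i}$.

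That sign, not any property of $\cW$, is what produces the transposition. Inserting $(-1)^{p_e\ell(\lambda)+f_e|\lambda|}$ into the sum over $\lambda$ and using $\sum_\lambda(-1)^{\ell(\lambda)}\chi_{\mu_1}(\lambda)\chi_{\mu_2}(\lambda)/\mathfrak{z}(\lambda)=(-1)^{|\mu_1|}\delta_{\mu_1,\mu_2^{\transpose}}$ gives $\mu_h=\mu_{h'}^{\transpose}$ and the sign $(-1)^{p_e|\mu_h|}$ exactly when $p_e=p_1+p_2$ is odd.

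Your rule, a diagonal tube with transposition arising ``in the mixed case'' \eqref{eq: weight reln 2} via $q\mapsto q^{-1}$, $\mu\mapsto\mu^{\transpose}$, keys on the wrong datum and gives wrong formulas. In the threefold limit $X\times\mathbb{C}^2$ only \eqref{eq: weight reln 1} holds, with $p_1=1$ and $p_2=0$, yet the AKMV gluing pairs $\mu$ with $\mu^{\transpose}$. More generally, reversing the cyclic order at one end of $e$ replaces $\epsilon_{\sigma_v(h)}$ by $-\epsilon_{\sigma_v(h)}-\epsilon_h$. That flips $p_e$, and hence whether the theorem transposes, without changing which of the two relations holds.

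The identity $\cW_{\mu_1,\mu_2,\mu_3}(q^{-1})=\pm\cW_{\mu_1^{\transpose},\mu_2^{\transpose},\mu_3^{\transpose}}(q)$ at most explains why the answer is independent of the choice of distinguished direction at a vertex. Using it at an edge would move the vertex argument away from $\re^{\epsilon_v}$.

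Two smaller points. First, $Z$ cannot literally be degenerated along $C_e$; capped localisation rearranges the localisation graph sum by inserting rubber integrals. Second, the gluing factor at $\widetilde{D}_h\cong\mathbb{C}^4$ is $\mathfrak{z}(\nu_h)(\epsilon_{\sigma(h)}\epsilon_v)^{2\ell(\nu_h)}$, coming from its four tangent weights, not a power of $\epsilon_h$. This is what cancels the factors $(\epsilon_{\sigma(h)}\epsilon_v)^{-\ell}$ produced by the vertex and edge terms.
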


We recover the disconnected Gromov--Witten invariants in a curve class $\beta$ by summing over all skeletal degrees $\boldsymbol{d}$ satisfying $\beta = \sum_e d_e [C_e]$:
\begin{equation*}
	\GWdiscarg{Z}{\gpT}{\beta} = \sum_{\boldsymbol{d}} \GWdiscarg{Z}{\gpT}{\boldsymbol{d}}\,.
\end{equation*}
To be able to apply \Cref{thm: vertex formalism main part} one has to assume that none of the skeletal degrees in the above sum is supported on \sd\ strata. In this case we say that $\beta$ is supported away from \sd\ strata.

\begin{cor}
	\label{cor: vertex formalism curve class}
	(\Cref{thm: vertex formalism}) Let $Z$ be a Calabi--Yau fivefold with a skeletal, Calabi--Yau and \locsd\ action by a torus $\gpT$. Fix a choice of \distdir s and orders. Then for all effective curve classes $\beta$ supported away from \sd\ strata we have
	\begin{flalign}
		\label{eq: vertex formula main part curve class}
		&&\GWdiscarg{Z}{\gpT}{\beta} = \sum_{\boldsymbol{d}} \sum_{\boldsymbol{\mu}\in \cP_{\Gamma, \boldsymbol{p}, \boldsymbol{d}} } ~ \prod_{e \in E(\Gamma)} E(e,\boldsymbol{\mu}) \prod_{v \in V(\Gamma)} \cW(v,\boldsymbol{\mu})\,. && \qed
	\end{flalign}
\end{cor}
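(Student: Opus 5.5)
The corollary follows from \Cref{thm: vertex formalism main part} by sorting the localisation graph sum \eqref{eq: GW torus localisation} according to skeletal degree. The plan is as follows.

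First, fix the effective class $\beta$. Every $\gpT$-fixed locus $F_\gamma$ of $\Mbar{}^\bullet_g(Z,\beta)$ is labelled by a decorated graph $\gamma$. The graph $\gamma$ determines a unique skeletal degree $\boldsymbol{d}\in\bZ_{\geq 0}^{E(\Gamma)}$ with $\beta=\sum_e d_e[C_e]$. Leaves carry no degree: they are non-compact, so no proper stable map covers them.

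Conversely, only finitely many such $\boldsymbol{d}$ occur. By the skeletal assumption $E(\Gamma)$ is finite, and the fixed locus is assumed proper (which is implicit in the residue definition). Hence the disconnected localisation sum splits as a finite sum
\begin{equation*}
	\GWdiscarg{Z}{\gpT}{\beta}=\sum_{\boldsymbol{d}}\GWdiscarg{Z}{\gpT}{\boldsymbol{d}},
\end{equation*}
over all skeletal degrees of class $\beta$. This is just the identity stated right after \Cref{thm: vertex formalism main part}. The only point to check is that the grading by genus is compatible: each $\GWdiscarg{Z}{\gpT}{\boldsymbol{d}}$ is again a well-defined formal series, because it is a partial sum of the homogeneous pieces.

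Second, the hypothesis that $\beta$ is supported away from \sd\ strata means, by definition, that every $\boldsymbol{d}$ in this sum satisfies \Cref{defn: curve class condition}. Therefore \Cref{thm: vertex formalism main part} applies to each summand with the same fixed choice of \distdir s and orders. Substituting \eqref{eq: vertex formula main part} into each term gives \eqref{eq: vertex formula main part curve class}.

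There is no real obstacle here beyond bookkeeping. The substantive content, namely capped localisation and the identification of the vertex and edge terms, lies entirely in \Cref{thm: vertex formalism main part}. The one subtlety worth noting is that the choice of \distdir s and orders enters both $\cP_{\Gamma,\boldsymbol{p},\boldsymbol{d}}$ and the weights. It must therefore be fixed once, uniformly for all $\boldsymbol{d}$, which the statement does.
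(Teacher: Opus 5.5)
Your proposal is correct and is the paper's argument: write $\GWdiscarg{Z}{\gpT}{\beta}=\sum_{\boldsymbol{d}}\GWdiscarg{Z}{\gpT}{\boldsymbol{d}}$ over skeletal degrees with $\beta=\sum_e d_e[C_e]$, then apply \Cref{thm: vertex formalism main part} to each summand. This is allowed because ``$\beta$ supported away from \sd\ strata'' is defined to mean that every such $\boldsymbol{d}$ is. Your remarks on finiteness and on fixing the \distdir s and orders uniformly are harmless additions to what the paper treats as immediate.
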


\begin{rmk}
	\label{rmk: weaker condition curve class}
	We remark that it actually suffices to impose a slightly weaker condition on the curve class $\beta$: Suppose that for every $\boldsymbol{d}$ satisfying $\beta = \sum_e d_e [C_e]$ we have $\GWdiscarg{Z}{\gpT}{\boldsymbol{d}}=0$ whenever there is an \sd\ edge $e$ with $d_e>0$. Then in this case \eqref{eq: vertex formula main part curve class} still holds true with the first sum only ranging over those skeletal degrees that are supported away from \sd\ strata. In \Cref{sec: O min 2 oplus 2 on P3} we will see an application where this extra freedom is indeed crucial.
\end{rmk}

The proof of \Cref{thm: vertex formalism main part} is deferred to \Cref{sec: proof vertex formalism}. In the remaining parts of this section we will first explain how the vertex formula implies \Cref{conj: generalised GV} and second we will compare our formula with the original vertex formalism for toric Calabi--Yau threefolds. To see the vertex formula at work in several examples we refer the reader to \Cref{sec: examples}.

\subsection{\texorpdfstring{On \Cref{conj: generalised GV}}{On Conjecture A}}
\label{sec: generalised GV}
We will prove \Cref{conj: generalised GV} in the setting where our vertex formalism applies in a slightly stronger version than it was stated in \Cref{thm: generalised GV loc sd} in the introduction.

\begin{thm}
	\label{thm: main part generalised GV loc sd}
	Let $Z$ be a Calabi--Yau fivefold with a skeletal, Calabi--Yau and \locsd\ action by a torus $\gpT$. Fix a basis $H^2_{\mgp{T}}(\mr{pt},\bZ)\cong \bZ[\epsilon_1,\ldots,\epsilon_{m}]$ and a choice of \distdir s and orders. Then there exist rational functions
	\begin{equation*}
		\Omega_{\boldsymbol{d}}(q_i) \in \bZ \left[ q_{1}^{\pm 1/2} , \ldots,  q_{m}^{\pm 1/2} , \, \left\{\big(1-{\textstyle\prod_i} q_i^{n_i/2} \big)^{-1}\right\}_{\boldsymbol{n}\in \bZ^{m}\setminus \{0\}} \,\right]
	\end{equation*}
	labelled by skeletal degrees $\boldsymbol{d}$ supported away from \sd\ strata such that under the change of variables $q_i= \re^{\epsilon_i}$ we have
	\begin{equation*}
		\GWarg{Z}{\gpT}{\boldsymbol{d}} = \sum_{%\substack{k\in\bZ_{> 0}\\
				k \vertspace \boldsymbol{d}%}
		} \frac{1}{k} \, \Omega_{\boldsymbol{d}/k}\big(q_i^k\big)\,.
	\end{equation*}
\end{thm}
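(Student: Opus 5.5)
The plan is to deduce the statement from \Cref{thm: vertex formalism main part} by taking the plethystic logarithm of the generating series of disconnected invariants. The key input is a general integrality result for plethystic logarithms, following Konishi and Peng, which is deferred to \Cref{sec: PLog}.

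First, introduce formal variables $Q_e$ for $e\in E(\Gamma)$ and form $Z^\bullet \coloneqq \sum_{\boldsymbol{d}} Q^{\boldsymbol{d}}\,\GWdiscarg{Z}{\gpT}{\boldsymbol{d}}$, where $\boldsymbol{d}$ runs over skeletal degrees supported away from \sd\ strata. The analogous connected series $Z^\circ$ then satisfies $Z^\bullet=\exp Z^\circ$. This holds because the fixed-locus decomposition \eqref{eq: GW torus localisation} is compatible with the exponential, and skeletal degrees add under disjoint union. The subset of degrees supported away from \sd\ strata is closed under addition and under splitting into summands, since those summands have smaller support.

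The statement is then equivalent to $Z^\circ=\sum_{k\geq1}\frac1k\Psi_k\big(\sum_{\boldsymbol{d}}\Omega_{\boldsymbol{d}}Q^{\boldsymbol{d}}\big)$, where $\Psi_k$ is the Adams operation $q_i\mapsto q_i^k$, $Q\mapsto Q^k$. In other words,
\begin{equation*}
	\sum_{\boldsymbol{d}}\Omega_{\boldsymbol{d}}Q^{\boldsymbol{d}} = \mathrm{PLog}\, Z^\bullet ,
\end{equation*}
which defines $\Omega_{\boldsymbol{d}}$ uniquely by Möbius inversion. Two things must be checked: that $\Omega_{\boldsymbol{d}}$ is a rational function, and that it lies in the stated ring $R$.

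Second, we establish the form of $Z^\bullet$. By \Cref{thm: vertex formalism main part}, each coefficient is a finite sum of products of edge and vertex weights. Write $\epsilon_h$, $\epsilon_v$, $\epsilon_{\sigma(h)}$ in the fixed basis, and note that $\kappa(\mu)$ is always even. The edge weight is $\pm\exp$ of an integer multiple of
\begin{equation*}
	\frac{\epsilon_v\epsilon_{\sigma(h)}\pm\epsilon_{v'}\epsilon_{\sigma(h')}}{\epsilon_h}.
\end{equation*}
Using the linear relations \eqref{eq: weight reln 1} or \eqref{eq: weight reln 2}, this quotient simplifies to an integral linear form, which I expect to be of the shape $f\epsilon_h+\ldots$. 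The simplification relies on the matching parities, which make the quadratic terms cancel up to a multiple of $\epsilon_h$. Hence the edge weight is a signed monomial in $q_i^{1/2}$.

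For the vertex weights, recall the standard fact that $s_\mu(q^\rho)=q^{\ast}\prod_{\square}(1-q^{h(\square)})^{-1}$ with an integer-coefficient numerator. The skew Schur functions at $q^{\rho+\mu}$ are rational with denominators products of $(1-q^{n})$, which follows via the Cauchy identity or the Zhou/AKMV hook-type formulas. Specialised at $q=\re^{\epsilon_v}$, every vertex weight therefore lies in $R$, with denominators of the form $1-\prod q_i^{n_i/2}$ and monic integer numerators. Consequently each coefficient of $Z^\bullet$ lies in $R$, and $Z^\bullet\in 1+\mathfrak m\,R[[Q]]$.

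Third, and this is the main obstacle, $R$ is not closed under the operations in $\mathrm{PLog}$: the Möbius inversion introduces factors $\mu(k)/k$, so integrality is not formal. Following Konishi and Peng, I would prove in \Cref{sec: PLog} a lemma of the following form. Let $F=1+\sum F_{\boldsymbol{d}}Q^{\boldsymbol{d}}$, where each $F_{\boldsymbol{d}}$ is a rational function whose denominator is a product of factors $1-x^{\boldsymbol n}$, with $x=q^{1/2}$, and whose numerator has integer coefficients. Then $\mathrm{PLog}\,F$ has coefficients of the same form.

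The idea of the proof is to reduce to power series. Expand each factor $1/(1-x^{\boldsymbol n})$ in an appropriate cone, chosen so that it is a geometric series that commutes with Adams operations. The claim then reduces to the classical fact that $\mathrm{PLog}$ of a series with integer coefficients, $1+\mathfrak m\,\bZ[[x^{\pm},Q]]$ in a suitable completion, has integer coefficients. This holds because $\mathrm{PLog}(1+a)$ is computed via Frobenius congruences $\Psi_p(a)\equiv a^p \pmod p$, or equivalently via a product expansion $\prod(1-x^{\boldsymbol n}Q^{\boldsymbol d})^{-c}$ with $c\in\bZ$.

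It remains to show that $\Omega_{\boldsymbol{d}}$ is rational with controlled denominators. For this, observe that $\mathrm{PLog}$ of a coefficient of degree $\boldsymbol{d}$ involves only finitely many terms, namely $\Psi_k$ of products of lower coefficients. The denominators that appear are therefore products of $\Psi_k$ of the original factors, which are again of the form $1-x^{k\boldsymbol n}$. Since an integer-coefficient series that equals a rational function with monic denominator has an integer numerator (Gauss's lemma), the coefficients of $\Omega_{\boldsymbol{d}}$ are integers.

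Combining these steps gives $\Omega_{\boldsymbol{d}}\in R$. Since the coefficients are integers, no factors of $\tfrac12$ are needed.
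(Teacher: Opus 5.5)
Your proof is correct. Its first two steps match the paper:

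\begin{itemize}
\item You restrict to skeletal degrees supported away from anti-diagonal strata. Equivalently, you set $Q_e=0$ on those edges, which commutes with $\exp$.
\item Edge weights are signed $\gpT$-characters, because $\kappa(\mu)$ is even and $\epsilon_h$ divides $\epsilon_v\epsilon_{\sigma(h)}-(-1)^{p_e}\epsilon_{v'}\epsilon_{\sigma(h')}$.
\item Vertex weights lie in $R$. What matters there is that the numerators have integer coefficients, not that they are monic.
\end{itemize}

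The genuine difference is how you show that the plethystic logarithm preserves $R$.

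The paper stays inside $R$ and follows Konishi--Peng. It uses an explicit M\"obius/multinomial formula for $\Omega_{\boldsymbol d}$ and Konishi's congruences for multinomial coefficients. Its one ring-specific input is the congruence $G^{p^a}\equiv(\Psi_pG)^{p^{a-1}} \bmod p^a$. This is proved in $\Rep(\gpT)$ and then extended to fractions $G_1/\prod_j(1-\chi_j)$.

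You instead expand every denominator along a generic direction. There, integrality of the plethystic logarithm becomes the elementary product-expansion statement for integer series, and you then pull back.

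To make this precise, fix $\xi\in\bR^m$ with $\xi\cdot\boldsymbol n\neq 0$ for all $\boldsymbol n\neq 0$. Let $\iota$ be the expansion map into series with $\xi$-well-ordered support. For a fixed $\boldsymbol d$ it suffices to use supports in translates of the pointed cone spanned by the finitely many relevant $\pm\boldsymbol n$. This ring is preserved by every $\Psi_k$, since $\xi\cdot k\boldsymbol n$ has the same sign as $\xi\cdot\boldsymbol n$. So $\iota$ commutes with $\Psi_k$, and $\mathrm{Exp}$ of an integer series is integral there.

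The pull-back needs no Gauss lemma. Write $\Omega_{\boldsymbol d}=P/D$ with $P\in\bQ[x^{\pm 1}]$ and $D=\prod_j(1-x^{\boldsymbol n_j})$, where $x=(q_i^{1/2})_i$. Then $P=D\cdot\iota(\Omega_{\boldsymbol d})$ is a product of integer series, hence integral.

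The trade-off is as follows. Your route replaces the number theory with a choice of completion and is arguably more transparent. The paper's route never leaves $R$ and isolates the single congruence that makes the argument work for any $\lambda$-ring satisfying it.
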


\begin{proof}
	We may identify $R=\bZ [ q_{1}^{\pm 1/2} , \ldots, q_{m}^{\pm 1/2} , \, \{(1-{\textstyle\prod_i} q_i^{n_i/2} )^{-1}\}_{\boldsymbol{n}} ]$ with the ring of virtual representations of a double cover of $\mgp{T}$ localised at the augmentation ideal. Since by \Cref{lem: PLog preserves integrality} the plethystic logarithm maps a series with coefficients in $R$ to one with coefficients in $R$, it suffices to prove that
	\begin{equation*}
		\GWdiscarg{Z}{\gpT}{\boldsymbol{d}} \in R
	\end{equation*}
	for all $\boldsymbol{d}$ supported away from \sd\ strata. This is true if we show that actually every individual factor in our vertex formula \eqref{eq: vertex formula main part} is an element in $R$. Indeed, for edge terms this is a consequence of the fact that $\epsilon_h$ divides $\epsilon_v \epsilon_{\sigma(h)} - (-1)^{p_e} \epsilon_{v'} \epsilon_{\sigma(h')}$ for all edges $e=(h,h')$ by the linear relations \eqref{eq: weight reln 1} and \eqref{eq: weight reln 2}. This can be seen in a case-by-case analysis. As a consequence, we can write an edge term as
	\begin{equation*}
		E(e,\boldsymbol{\mu}) = (-1)^{(f_e+p_e)|\mu_h|} q_h^{\kappa(\mu_h)/2}
	\end{equation*}
	where $q_h$ is a $\gpT$-character. Since $\kappa(\mu_h)$ is integer, we thus deduce that every edge weight lies in $R$.
	
	Regarding vertex weights, let us show that $\cW_{\mu_1,\mu_2,\mu_3}(q)$ is a rational function in $q^{1/2}$ with poles only at zero and roots of unity. Indeed, up to a leading monomial factor, the topological vertex depends on $q$ only through the specialised skew Schur functions \smash{$s_{\alpha/\beta}(q^{\rho+\mu})$}. The latter are uniquely determined from the specialisation of the power functions $p_k$. The rationality and pole constraint claimed thus follows from the evaluation
	\begin{equation*}
		p_k\big(q^{\rho+\mu}\big) = \frac{1}{q^{k/2}-q^{-k/2}} + \sum_{i=1}^{\ell(\mu)} q^{(-i+1/2+\mu_i)k} - q^{(-i+1/2)k}\,.
	\end{equation*}
	Alternatively, one may also interpret the rationality and the restriction of poles in the vertex weight as a consequence of the relative Gromov--Witten/Donaldson--Thomas correspondence for toric threefolds \cite[Thm.~1 \& 3]{MOOP11:GWDTtoric}.
\end{proof}

Again, we obtain the analogue of \Cref{thm: main part generalised GV loc sd} for invariants labelled by curve classes by summing over skeletal degrees.

\begin{cor}  (\Cref{thm: generalised GV loc sd})
	\label{thm: main part generalised GV curve class loc sd}
	Under the assumptions of \Cref{thm: main part generalised GV loc sd} there exist \mbox{rational} functions
	\begin{equation*}
		\Omega_{\beta}(q_i) \in \bZ \left[  q_{1}^{\pm 1/2} , \ldots,  q_{m}^{\pm 1/2} , \, \left\{\big(1-{\textstyle\prod_i} q_i^{n_i/2} \big)^{-1}\right\}_{\boldsymbol{n}\in \bZ^{m}\setminus \{0\}} \,\right]
	\end{equation*}
	labelled by curve classes $\beta$ supported away from \sd\ strata such that under the change of variables $q_i= \re^{\epsilon_i}$ we have
	\begin{flalign*}
		&& \GWarg{Z}{\gpT}{\beta} = \sum_{%\substack{k\in\bZ_{> 0}\\
				k \vertspace \beta%}
		} \frac{1}{k} \, \Omega_{\beta/k}\big(q_i^k\big)\,. && \qed
	\end{flalign*}
\end{cor}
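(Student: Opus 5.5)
The plan is to deduce the corollary directly from \Cref{thm: main part generalised GV loc sd} by summing over skeletal degrees. Since $\beta$ is supported away from \sd\ strata, every skeletal degree $\boldsymbol{d}$ with $\beta=\sum_e d_e[C_e]$ is itself supported away from \sd\ strata. So the connected series decomposes as $\GWarg{Z}{\gpT}{\beta}=\sum_{\boldsymbol{d}\mapsto\beta}\GWarg{Z}{\gpT}{\boldsymbol{d}}$, where $\boldsymbol{d}\mapsto\beta$ means $\sum_e d_e[C_e]=\beta$. This holds because virtual localisation for connected invariants splits the fixed loci by skeletal degree. This sum is finite: $\Gamma$ has finitely many edges, and since we work with a residue whose fixed locus is proper, only finitely many $\boldsymbol{d}$ map to $\beta$. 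Then I define
\begin{equation*}
\Omega_\beta \coloneqq \sum_{\boldsymbol{d}\mapsto\beta}\Omega_{\boldsymbol{d}},
\end{equation*}
with $\Omega_{\boldsymbol{d}}$ from \Cref{thm: main part generalised GV loc sd}. As a finite sum of elements of the ring $R$, it again lies in $R$.

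Next I verify the multiple-cover formula by regrouping. Substituting the theorem gives $\GWarg{Z}{\gpT}{\beta}=\sum_{\boldsymbol{d}\mapsto\beta}\sum_{k\mid\boldsymbol{d}}\frac1k\Omega_{\boldsymbol{d}/k}(q_i^k)$. Consider the map $(\boldsymbol{d},k)\mapsto(\boldsymbol{d}'=\boldsymbol{d}/k,k)$. It is a bijection between pairs with $\boldsymbol{d}\mapsto\beta$ and $k\mid\boldsymbol{d}$ on one side, and pairs with $k\mid\beta$ and $\boldsymbol{d}'\mapsto\beta/k$ on the other. Indeed, $\boldsymbol{d}=k\boldsymbol{d}'$ recovers the inverse, and $k\boldsymbol{d}'\mapsto\beta$ exactly when $\boldsymbol{d}'\mapsto\beta/k$.

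One must also check that each such $\boldsymbol{d}'$ is supported away from \sd\ strata. This holds because $\boldsymbol{d}'$ has the same support as $k\boldsymbol{d}'$, and conversely every $\boldsymbol{d}'\mapsto\beta/k$ satisfies this, since $k\boldsymbol{d}'\mapsto\beta$. Hence the double sum equals $\sum_{k\mid\beta}\frac1k\sum_{\boldsymbol{d}'\mapsto\beta/k}\Omega_{\boldsymbol{d}'}(q_i^k)=\sum_{k\mid\beta}\frac1k\Omega_{\beta/k}(q_i^k)$, as claimed.

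The only delicate point is the bookkeeping of "$k\mid\beta$" when $H_2$ has torsion, or when $\beta/k$ is ambiguous. I would interpret $k\mid\beta$ as ranging over classes $\beta'$ with $k\beta'=\beta$ that are images of skeletal degrees. For those, $\Omega_{\beta'}$ is defined as above, and it vanishes automatically when no skeletal degree maps to $\beta'$. This matches the regrouping exactly, so no further argument is needed.
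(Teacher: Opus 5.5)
Your proposal is correct and is the same argument as the paper, which deduces this corollary from \Cref{thm: main part generalised GV loc sd} simply by summing over all skeletal degrees $\boldsymbol{d}$ with $\sum_e d_e[C_e]=\beta$. Your definition $\Omega_\beta=\sum_{\boldsymbol{d}\mapsto\beta}\Omega_{\boldsymbol{d}}$, the reindexing $\boldsymbol{d}=k\boldsymbol{d}'$, and the check that each $\boldsymbol{d}'$ is still supported away from \sd\ strata spell out exactly the bookkeeping the paper leaves implicit.
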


\begin{rmk}
	The same result holds under the slightly weaker but more technical assumption stated in \Cref{rmk: weaker condition curve class}.
\end{rmk}

\begin{rmk}
	\label{rmk: refined generalised GV}
	The fact that a lift of the Gromov--Witten series labelled by skeletal degrees to a rational function exists by \Cref{thm: main part generalised GV loc sd} suggests there should be a refinement of \Cref{conj: generalised GV} along the following lines. The yet-to-be-constructed moduli space $\MTwoMod_{\beta}(Z)$ of M2-branes in curve class $\beta$ should admit a morphism to the Chow variety. Also the moduli space of stable maps admits such a morphism by taking the support of a stable map:
	\begin{equation*}
	\begin{tikzcd}
		\MTwoMod_{\beta}(Z) \ar[dr] & & \Mbar{}^{\bullet}(Z,\beta) \ar[dl]\\
		&\mr{Chow}_{\beta}(Z)&
	\end{tikzcd}
	\end{equation*}
	Pushing forward an appropriate K-theory class along the left arrow should yield an element		
	\begin{equation*}
		\widehat{\Omega}_\beta \in K_{\gpT}\big( \mr{Chow}_{\beta}(Z) \big)_{\mr{loc}}\,.
	\end{equation*}
	It should be related to the push-forward of the virtual fundamental class along the right arrow via the Chern character and the plethysm on the Chow variety (cf.~\cite[Sec.~2.3.5]{NO14:membranes}) so that \Cref{conj: generalised GV} is recovered by pushing the refined identity forward to a point. Since for skeletal torus actions all fixed points of $\mr{Chow}_{\beta}(Z)$ are isolated and labelled by the skeletal degrees, we see that with \Cref{thm: main part generalised GV loc sd} we actually proved such a refined version of \Cref{conj: generalised GV}.
\end{rmk}

\subsection{The globally \sd\ situation}
\label{sec: globally sd limit}
In this section we will specialise our vertex formalism to compute the local contribution of a Calabi--Yau threefold $X\subset Z$ which is the fixed locus of a Calabi--Yau $\Gm_q$-action on $Z$.

\subsubsection{The general case}
Let us describe the local setup. Let $X$ be a smooth toric Calabi--Yau threefold together with the action by its Calabi--Yau torus $\gpT'\cong \Gm[2]$. Let $\cL$ be a $\gpT'$-equivariant line bundle on $X$ and let $\Gm_q$ act on its fibres with character $q^{-1}$. We obtain a Calabi--Yau torus action on the local Calabi--Yau fivefold
\begin{equation*}
	Z \coloneqq \Tot{}_X \cL \oplus \cL^\vee \qquad \reflectbox{\actson} \qquad \gpT' \times \Gm_q \eqqcolon \gpT\,.
\end{equation*}

To apply our vertex formalism in this situation, we first fix a specific choice of \distdir s and orders: For each vertex $v$ of the $\gpT$-diagram, that is for each torus fixed point $p_v\in X$, we choose the \distdir\ to be the half-edge associated with the line bundle $\cL$:
\begin{equation*}
	\epsilon_v = - c_1(\cL|_{p_v})\,.
\end{equation*}
Assuming that the induced action by $\gpT'_{\bR}$ is Hamiltonian, the moment map $\mu: X \rightarrow \hhh^2_{\gpT'}(\mr{pt},\bR)$ yields an embedding of the $\gpT$-diagram into $\hhh^2_{\gpT'}(\mr{pt},\bR) \cong \bR^2$. Hence, fixing an orientation of $\bR^2$ yields a cyclic order for the three half-edges associated to $T_{p_v}X$ at each vertex $v$. With this choice of \distdir s and orders the \mxp\ of each edge is odd.
To determine the edge weights, note that the normal bundle of each torus preserved line splits into
\begin{equation*}
	N_{C_e}X \oplus \cL|_{C_e}\oplus \cL^\vee|_{C_e} \cong \cO_{\bP^1}(-1+f_{e,1}) \oplus \cO_{\bP^1}(-1-f_{e,1}) \oplus \cO_{\bP^1}(f_{e,2}) \oplus \cO_{\bP^1}(-f_{e,2})
\end{equation*}
for some $f_{e,1},f_{e,2} \in \bZ$. Writing $h=(v,e)$ and $h'=(v',e)$ for the half-edges of $e$ the torus weights at the fixed points are related by
\begin{equation*}
	\epsilon_{\sigma_v(h)} = - \epsilon_{\sigma_{v'}(h')} + f_{e,1} \epsilon_h\,, \qquad \epsilon_v = \epsilon_{v'} + f_{e,2} \epsilon_h\,.
\end{equation*}
Thus, writing $q_i = \re^{\epsilon_i}$ our vertex formula \eqref{eq: vertex formula main part} specialises to
\begin{equation}
\label{eq: formula globally sd}
\begin{split}
	& \GWdiscarg{Z}{\mgp{T}}{\boldsymbol{d}} = \\
	& \sum_{\boldsymbol{\mu}\in \cP_{\Gamma, \boldsymbol{p}, \boldsymbol{d}}} \prod_{e \in E(\Gamma)} (-1)^{(f_{e,1}+f_{e,2}+1)|\mu_h|} \, \left(q_v^{f_{e,1}} q_{\sigma_v(h)}^{f_{e,2}} q_h^{ - f_{e,1}f_{e,2}}\right)^{\kappa({\mu_h}) / 2}  \prod_{v \in V(\Gamma)} \cW_{\mu_{h^v_{1}},\mu_{h^v_{2}},\mu_{h^v_{3}}}(q_v) \,.
\end{split}
\end{equation}

\subsubsection{The threefold limit}
\label{sec: TV limit}
Let us further specialise to the case where $\cL \cong \cO_X$ or in other words to the situation where $Z$ is the product of $X$ with the affine plane:
\begin{equation*}
	Z=X\times \Aaff[2]\,.
\end{equation*}
The torus $\Gm_q$ acts anti-diagonally on the affine plane with weights $\pm \epsilon \coloneqq \pm c_1(q)$. As explained in \cite[Sec.~2.4]{BS24:refined}, in this situation the $\gpT$-equivariant Gromov--Witten invariants of $Z$ recover the ones of the threefold $X$ where the weight $\epsilon$ takes the role of the genus counting variable:
\begin{equation}
	\label{eq: GW CY5 to CY3}
	\GWdiscarg{Z}{\mgp{T}}{\boldsymbol{d}} = \sum_{g \in \bZ} (-\epsilon^2)^{g-1} ~ \GWdiscarg{X}{\mgp{T}}{\boldsymbol{d}}\,.
\end{equation}
Specialising $\epsilon_v=\epsilon$ and $f_{e,2}=0$ in equation \eqref{eq: formula globally sd}, our vertex formalism thus yields the following formula for these Gromov--Witten invariants:
\begin{equation*}
	\sum_{g \in \bZ} (-\epsilon^2)^{g-1} ~ \GWdiscarg{X}{\mgp{T}}{\boldsymbol{d}} = \sum_{\boldsymbol{\mu}\in \cP_{\Gamma, \boldsymbol{p}, \boldsymbol{d}}} ~ \prod_{e \in E(\Gamma)} (-1)^{(f_e+1)|\mu_h|} q^{\kappa({\mu_h})  f_e /2}   \prod_{v \in V(\Gamma)} \cW_{\mu_{h^v_{1}},\mu_{h^v_{2}},\mu_{h^v_{3}}}(q) \,.
\end{equation*}
This is precisely the topological vertex formula for toric Calabi--Yau threefolds of Aganagic--Klemm--Mari\~{n}o--Vafa \cite{AKMV05:TopVert} as stated in \cite{LLLZ09:MathTopVert}. Note also that in the product case $X\times \Aaff[2]$ one may identify $f_e$ with what is usually called the framing factor; this is why in the general case we chose to call its congruence class modulo two the \frp .

%-------------------------------------------------------------------------------
\section{Examples}
\label{sec: examples}

\subsection{\texorpdfstring{$\boldsymbol{\Tot{}_{\bP^1} ( \cO(-2) ) \times \Aaff[3]}$}{Tot O_P1(-2) x C3} (continued)}
\label{sec: A1 x C3}
Let us apply the vertex formalism to our running example $Z=\Tot{}_{\bP^1} ( \cO(-2) ) \times \Aaff[3]$ (\Cref{ex: A1 x C3 1,ex: A1 x C3 2,ex: A1 x C3 3,ex: A1 x C3 4}). First, we consider case (A) which is a special instance of the situation described in \Cref{sec: TV limit}: We have $Z = X\times \Aaff[2]$ and $\gpT_{\mr{A}}$ acts on the coordinate lines of the affine plane with opposite weights. Hence, the formalism reduces to the usual topological vertex method which yields
\begin{equation}
	\label{eq: A1 x C3 formula A}
	\GWdiscarg{Z}{\gpT_{\mr{A}}}{} = \sum_{\mu} Q^{|\mu|} q_i^{\kappa(\mu) /2} s_{\mu}\big(q_i^{\rho}\big) \, s_{\mu^{\transpose}}\big(q_i^{\rho}\big) = \Exp \frac{Q}{\big(q_i^{1/2} - q_i^{-1/2}\big)^2}
\end{equation}
where we write $q_i = \re^{\epsilon_i}$ and $\Exp$ denotes the plethystic exponential
\begin{equation*}
	\Exp \big( f(q,Q)\big) \coloneqq \exp \left( \sum_{k>0} \frac{1}{k} \, f\big(q^k,Q^k\big) \right)\,.
\end{equation*}

Situation (B) is more interesting. Here, the \mxp\ of the unique compact edge is odd too but now the vertices carry different choices for the \distdir . If we apply \Cref{thm: vertex formalism main part} we get the formula
\begin{equation}
	\GWdiscarg{Z}{\gpT_{\mr{B}}}{} = \sum_{\mu} Q^{|\mu|} q_i^{-\kappa(\mu)/2} \, s_{\mu}\big(q_i^{-\rho}\big) \, s_{\mu^{\transpose}}\big(q_j^{\rho}\big) = \Exp \frac{Q}{\big(q_i^{1/2} - q_i^{-1/2}\big)\big(q_j^{1/2} - q_j^{-1/2}\big)}\,.
\end{equation}
Formula \eqref{eq: A1 x C3 formula A} and the above are both specialisations of the following conjectural formula for the four dimensional Calabi--Yau torus $\gpT$ acting on $Z$ \cite[Conj.~3.4]{BS24:refined}:
\begin{equation}
	\label{eq: A1 x C3 conj formula}
	\GWdiscarg{Z}{\gpT}{} = \Exp \frac{\big(q_2^{1/2} - q_2^{-1/2}\big) ~ Q}{\big(q_3^{1/2} - q_3^{-1/2}\big)\big(q_4^{1/2} - q_4^{-1/2}\big)\big(q_5^{1/2} - q_5^{-1/2}\big)}\,.
\end{equation}

\begin{rmk}
	\label{rmk: ref top vert A1 x C3}
	As already explained in \cite[Sec.~7.2.5]{BS24:refined}, the fact that neither of the formulae for torus actions (A) and (B) agree with any quantity computed via the refined topological vertex \cite{IKV09:RefVert} is due to the non-compactness of the moduli space of stable maps to the threefold $\Tot{}_{\bP^1} (\cO(-2)) \times \Aaff$. The refined topological vertex evaluates the $\epsilon_3 \rightarrow \pm \infty$ limit of formula \eqref{eq: A1 x C3 conj formula}. In the following section we will analyse a larger class of toric threefolds exhibiting the same phenomenon.
\end{rmk}

\medskip

\subsection{Strip geometries}
\label{sec: strips}
Generalising the last example, our formalism applies to a wider class of so-called strip geometries. By this we mean a product $Z=X\times\Aaff[2]$ where $X$ is the toric variety whose fan is the cone over a triangulated strip
\begin{equation*}
	\begin{tikzpicture}[scale=1.1]
		\draw (0,0) -- (2,0) -- (0,2) -- (0,0);
		\draw (2,0) -- (2,2) -- (0,2);
		\draw (2,0) -- (4,2) -- (2,2);
		\draw (2,0) -- (6,2) -- (4,2);
		\draw (2,0) -- (4,0) -- (6,2);
		\draw (4,0) -- (6,0) -- (6,2);
		\draw (6,0) -- (8,2) -- (6,2);
%		\draw (6,0) -- (8,0) -- (8,2);
		\draw[dashed] (6,0) -- (6.7,0);
		\draw[dashed] (8,2) -- (8.7,2);
	\end{tikzpicture}
\end{equation*}
placed at height one. The torus diagram of $X$ takes the following shape:
\begin{equation}
	\label{eq: strip toric diagram}
	\begin{tikzpicture}[baseline={(current bounding box.center)}, scale=1.1]
		\draw[gray] (0,0) -- (2,0) -- (0,2) -- (0,0);
		\draw[gray] (2,0) -- (2,2) -- (0,2);
		\draw[gray] (2,0) -- (4,2) -- (2,2);
		\draw[gray] (2,0) -- (6,2) -- (4,2);
		\draw[gray] (2,0) -- (4,0) -- (6,2);
		\draw[gray] (4,0) -- (6,0) -- (6,2);
		\draw[gray] (6,0) -- (8,2) -- (6,2);
%		\draw[gray] (6,0) -- (8,0) -- (8,2);
		\draw[gray, dashed] (6,0) -- (6.7,0);
		\draw[gray, dashed] (8,2) -- (8.7,2);
		\draw (-0.5,0.7) -- (0.5,0.7) -- (1.5,1.7) -- (3.45,1.7) -- (3.95,1.2) -- (4.2,0.7) -- (4.6,0.3) -- (6.15,0.3) -- (6.45,0);% -- (8,0.15);
		\draw[dashed] (6.45,0) -- (6.75,-0.3);
		\draw (0.5,0.7) -- (0.5,-0.5);
		\draw (1.5,1.7) -- (1.5,2.5);
		\draw (3.45,1.7) -- (3.45,2.5);
		\draw (3.95,1.2) -- (3.95,2.5);
		\draw (4.2,0.7) -- (4.2,-0.5);
		\draw (4.6,0.3) -- (4.6,-0.5);
		\draw (6.15,0.3) -- (6.15,2.5);
%		\draw (6.3,0.15) -- (6.3,-0.5);
		%
		\node[circle,inner sep=1pt,fill=black] at (0.5,0.7){};
		\node[circle,inner sep=1pt,fill=black] at (1.5,1.7){};
		\node[circle,inner sep=1pt,fill=black] at (3.45,1.7){};
		\node[circle,inner sep=1pt,fill=black] at (3.95,1.2){};
		\node[circle,inner sep=1pt,fill=black] at (4.2,0.7){};
		\node[circle,inner sep=1pt,fill=black] at (4.6,0.3){};
		\node[circle,inner sep=1pt,fill=black] at (6.15,0.3){};
%		\node[circle,inner sep=1pt,fill=black] at (6.3,0.15){};
	\end{tikzpicture}
\end{equation}
Here, we presented its embedding in $\bR^2$ provided by the moment map of the two-dimensional Calabi--Yau torus of $X$. Now let $\mgp{T}'\cong \Gm[4]$ be a Calabi--Yau torus acting on $Z$. Note that by construction all $\mgp{T}'$-weights attached to upwards pointing legs coincide. We denote their weight by $\epsilon_2$. Similarly, let us write $\epsilon_3$ for the $\mgp{T}'$-weight associated to downwards pointing legs. Finally, denote the tangent weights at the origin of $\Aaff[2]$ by $\epsilon_4$ and $\epsilon_5$. We remark that in $\mgp{T}'$-equivariant cohomology $\epsilon_2,\ldots,\epsilon_5$ are linearly independent.

Now consider the two-dimensional subtorus $\gpT\subset \gpT'$ on which the relations $\epsilon_2 = -\epsilon_4$ and $\epsilon_3 = -\epsilon_5$ hold. With these constraints the induced $\gpT$-action on $Z$ is \locsd\ and we can apply \Cref{thm: vertex formalism main part}. For this we fix a \distdir\ at each vertex by choosing $\epsilon_v = \epsilon_4$ whenever a vertex $v$ carries an upwards pointing leg and $\epsilon_v = -\epsilon_5$ otherwise. A choice of \distdir\ at each vertex is fixed by choosing an orientation of the plane $\bR^2$ for the embedding of the diagram \eqref{eq: strip toric diagram}. With this choice the \mxp\ of an edge is even if and only if it connects two vertices where one has an upwards and the other one a downwards pointing leg attached.

To state the vertex formula we label the vertices in \eqref{eq: strip toric diagram} from left to right by $v_1,\ldots,v_N$. The index set decomposes into $I_{\mr{u}} \sqcup I_{\mr{d}} = \{1,\ldots,N\}$ labelling vertices with an upwards respectively downwards pointing leg. We decorate the edge $e$ connecting $v_i$ with $v_{i+1}$ with a partition $\mu_i$ and write $Q_i = Q^{C_e}$. With this notation \Cref{thm: vertex formalism main part} yields the formula
\begin{equation*}
	\GWdiscarg{Z}{\gpT}{} = \sum_{\mu_1,\ldots,\mu_{N+1}} \,  \prod_{i=1}^{N-1}Q_i^{|\mu_i|} q_{i,i+1}^{-\kappa(\mu_i)/2} ~ \prod_{i\in I_{\mr{u}}} \cW_{\mu_{i},\mu_{i-1}^{\transpose},\varnothing}\big(q_4\big) ~ \prod_{i\in I_{\mr{d}}} \cW_{\mu_{i-1}^{\transpose},\mu_{i},\varnothing}\big(q_5^{-1}\big) 
\end{equation*}
where
\begin{equation*}
	q_{i,i+1} = \begin{cases}
		q_4 & i,i+1\in I_{\mr{u}} \\
		q_5 & i,i+1\in I_{\mr{d}} \\
		q_4 q_5 & i \in I_{\mr{u}} \text{ and } i+1 \in I_{\mr{d}} \\
		1 & \text{otherwise.}
	\end{cases}
\end{equation*}
If we plug in the topological vertex formula \eqref{eq: TV formula} and use the identities
\begin{equation*}
	\kappa\big(\mu^{\transpose}\big) = - \kappa\big(\mu\big)\,, \qquad q^{-\kappa(\mu)/2}\, s_{\mu}\big(q^{\rho}\big) = s_{\mu^{\transpose}}\big(q^{-\rho}\big)
\end{equation*}
we get
\begin{align*}
	&\qquad\GWdiscarg{Z}{\gpT}{} =\\
	%& = \sum_{\substack{\mu_1,\ldots,\mu_{N-1} \\ \nu_1,\ldots,\nu_{N}}} \prod_{i=1}^{N-1}Q_i^{|\mu_i|} q_{i,i+1}^{-\kappa(\mu_i)/2} \prod_{i\in I_{\mr{u}}} q_4^{\kappa(\mu_{i})/2} s_{\frac{\mu_{i}^{\transpose}}{\nu_i}}(q_4^{\rho}) \, s_{\frac{\mu_{i-1}^{\transpose}}{\nu_i}}(q_4^{\rho}) ~ \prod_{i\in I_{\mr{d}}} q_5^{-\kappa(\mu_{i-1}^{\transpose})/2} s_{\frac{\mu_{i-1}}{\nu_i}}(q_5^{-\rho}) \, s_{\frac{\mu_{i}}{\nu_i}}(q_5^{-\rho}) \\
	& \sum_{\substack{\mu_1,\ldots,\mu_{N-1} \\ \nu_2,\ldots,\nu_{N-1}}} \prod_{i=1}^{N-1} Q_i^{|\mu_i|} s_{\mu_1}\!\big(q_{v_1}^{-\rho}\big) \left(\prod_{\substack{1<i<N\\i\in I_{\mr{u}}}} s_{\frac{\mu_{i-1}^{\transpose}}{\nu_i}}\!\big(q_4^{\rho}\big)\, s_{\frac{\mu_{i}^{\transpose}}{\nu_i}}\!\big(q_4^{\rho}\big)\right) \left(\prod_{\substack{1<i<N\\i\in I_{\mr{d}}}} s_{\frac{\mu_{i-1}}{\nu_i}}\big(q_5^{-\rho}\big) \, s_{\frac{\mu_{i}}{\nu_i}}\big(q_5^{-\rho}\big)\right) s_{\mu_{N-1}^{\transpose}}\!\big(q_{v_N}^{\rho}\big) 
\end{align*}
where we write $q_{v} = q_4$ for a vertex $v$ carrying an upwards pointing leg and $q_v=q_5$ otherwise. Following the approach of \cite{IKP06:VertexOnStrip}, one may evaluate the sum over partitions $\mu_i$, $\nu_i$ using the homogeneity of Schur functions $Q^{|\mu|-|\nu|} s_{\mu/\nu}(x) = s_{\mu/\nu}(Qx)$ and repeatedly applying the following specialisation of the skew Cauchy identities \cite[Sec.~I.5]{MacD95:SymFctns}:
\begin{align*}
	\sum_{\mu} s_{\frac{\mu}{\nu_1}} \big(Q q^{\rho}\big) ~ s_{\frac{\mu}{\nu_2}} \big(t^{\rho}\big) & = \Exp\left(\phantom{-}\frac{Q}{(q^{1/2} - q^{-1/2})(t^{1/2} - t^{-1/2})}\right) \cdot \sum_{\mu} s_{\frac{\nu_2}{\mu}} \big(Q q^{\rho}\big) ~ s_{\frac{\nu_1}{\mu}} \big(t^{\rho}\big) \,,\\
	\sum_{\mu} s_{\frac{\mu^{\transpose}}{\nu_1}} \big(Q q^{\rho}\big) ~ s_{\frac{\mu}{\nu_2}} \big(t^{\rho}\big) & = \Exp\left(-\frac{Q}{(q^{1/2} - q^{-1/2})(t^{1/2} - t^{-1/2})}\right) \cdot \sum_{\mu} s_{\frac{\nu_2^{\transpose}}{\mu}} \big(Q q^{\rho}\big) ~ s_{\frac{\nu_1^{\transpose}}{\mu^{\transpose}}} \big(t^{\rho}\big)\,.
\end{align*}
The resulting formula is
\begin{equation}
	\label{eq: strip formula}
	\GWdiscarg{Z}{\gpT}{} = \Exp \left(\sum_{1 \leq m \leq n \leq N} \frac{\prod_{k=m}^n Q_k}{\big(q_{v_m}^{1/2} - q_{v_m}^{-1/2}\big)\big(q_{v_n}^{1/2} - q_{v_n}^{-1/2}\big)}\right) \,.
\end{equation}
Conjectural formulas for strip geometries beyond \locsd\ torus action like \eqref{eq: A1 x C3 conj formula} will be presented in \cite{HS26:Experiments}.
\begin{rmk}
	\label{rmk: ref top vert strip}
	In general, the above expression agrees with formulae produced via the refined topological vertex only when the moduli space of stable maps to $X$ is proper in all genera and curve classes. This is for instance the case for the resolved conifold. For non-proper the moduli spaces the quantities generally disagree.
\end{rmk}

\begin{rmk}
	\label{rmk: gauge th only sd limit}
	A toric Calabi--Yau threefold $X$ engineering supersymmetric $\mr{SU}(N)$ gauge theory on $\Aaff[2]$ may be obtained by gluing two strip geometries with all legs pointing upwards resp.~downwards along the vertical non-compact directions. So from the above discussion the reader might be tempted to hope that refined invariants of such an $X$ (that is invariants on the so-called general $\Omega$ background $\epsilon_4$, $\epsilon_5$) may be computed via our vertex formalism. This is, however, unfortunately impossible because for the gluing to be compatible with the torus action one has to impose the constraint $\epsilon_2 = -\epsilon_3$ which in turn forces $\epsilon_4 = -\epsilon_5$. This means the vertex formalism presented in this note cannot compute the Gromov--Witten invariants of $X\times \Aaff[2]$ beyond the self-dual limit which is already well-studied in the literature \cite{IKP06:GaugeStringDuality}.
\end{rmk}

\medskip

\subsection{The GW dual of rank-two DT theory on the resolved conifold}
\label{sec: RC x A1}
Let us consider our first example of a fivefold featuring a compact four cycle:
\begin{equation*}
	Z = \Tot{}_{\bP^1 \times \bP^1} \big(\cO(-1,0) \oplus \cO(-1,0) \oplus \cO(0,-2)\big) \longrightarrow \bP^1 \times \bP^1
\end{equation*}
This fivefold is the product of the resolved conifold and the resolution of the $A_1$ surface singularity.

We assume that the dense torus $\widehat{\gpT}\cong \Gm[5]$ acts with tangent weights $\epsilon_1$, $\epsilon_2$ at the fixed point $(0,0)\in\bP^1 \times \bP^1$ and on the fibre of each line bundle over $(0,0)$ with tangent weight $\epsilon_3$, $\epsilon_4$ and $\epsilon_5$ respectively. We denote by $\gpT$ the two-dimensional subtorus of $\widehat{\gpT}$ for which
\begin{equation*}
	\epsilon_3 = - \epsilon_2 \,,\qquad \epsilon_4 = \epsilon_2 \,,\qquad \epsilon_5 = -\epsilon_1-\epsilon_2\,.
\end{equation*}
\Cref{fig: RC x A1} illustrates the resulting $\gpT$-diagram. Observe that this torus action is both Calabi--Yau and \locsd.

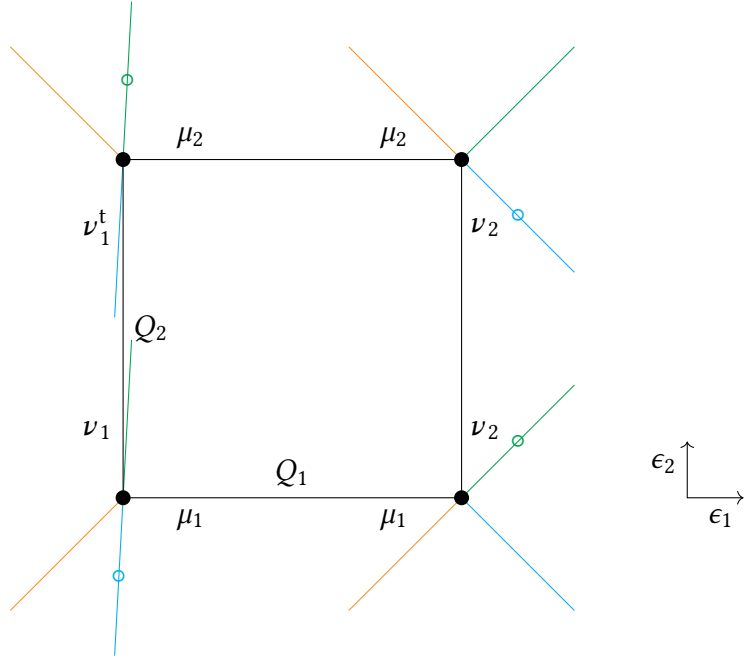
\begin{figure}
	\centering
	\begin{tikzpicture}[scale=1.5]
		\draw[Cyan] (0,0) -- (-0.075,-1.4);
		\node[Cyan] at ($(0,0)!0.5!(-0.075,-1.4)$) {$\circ$};
		\draw[Cyan] (0,3) -- (-0.075,1.6);
		\draw[Cyan] (3,0) -- (4,-1);
		\draw[Cyan] (3,3) -- (4,2);
		\node[Cyan] at ($(3,3)!0.5!(4,2)$) {$\circ$};
		\draw[Green] (0,0) -- (0.075,1.4);
		\draw[Green] (0,3) -- (0.075,4.4);
		\node[Green] at ($(0,3)!0.5!(0.075,4.4)$) {$\circ$};
		\draw[Green] (3,0) -- (4,1);
		\node[Green] at ($(3,0)!0.5!(4,1)$) {$\circ$};
		\draw[Green] (3,3) -- (4,4);
		\draw[BurntOrange] (0,0) -- (-1,-1);
		\draw[BurntOrange] (0,3) -- (-1,4);
		\draw[BurntOrange] (3,0) -- (2,-1);
		\draw[BurntOrange] (3,3) -- (2,4);
		\draw (0,0) -- (3,0) -- (3,3) -- (0,3) -- (0,0);
		\node[below] at ($(0,0)!0.2!(3,0)$) {$\mu_1$};
		\node[below] at ($(0,0)!0.8!(3,0)$) {$\mu_1$};
		\node[right] at ($(3,0)!0.2!(3,3)$) {$\nu_2$};
		\node[right] at ($(3,0)!0.8!(3,3)$) {$\nu_2$};
		\node[above] at ($(3,3)!0.2!(0,3)$) {$\mu_2$};
		\node[above] at ($(3,3)!0.8!(0,3)$) {$\mu_2$};
		\node[left] at ($(0,0)!0.2!(0,3)$) {$\nu_1$};
		\node[left] at ($(0,0)!0.8!(0,3)$) {$\nu_1^{\transpose}$};
		\node[above] at (1.5,0) {$Q_1$};
		\node[right] at (0,1.5) {$Q_2$};
		\node[below] at (5.3,0) {$\epsilon_1$};
		\node[left] at (5,0.3) {$\epsilon_2$};
		\draw[<->] (5,0.5) -- (5,0) -- (5.5,0);
		\node[circle,inner sep=2pt,fill=black] at (0,0){};
		\node[circle,inner sep=2pt,fill=black] at (3,0){};
		\node[circle,inner sep=2pt,fill=black] at (0,3){};
		\node[circle,inner sep=2pt,fill=black] at (3,3){};
	\end{tikzpicture}
	\caption{The $\gpT$-diagram of $\Tot \cO(-1,0) \oplus \cO(-1,0) \oplus \cO(0,-2)$ with a choice of \distdir\ at each vertex and half-edges decorated by partitions. The half-edges associated with a line bundle are coloured blue, green and orange respectively. All vertical lines on the left should be parallel. The tilt of the coloured half-edges is solely for display purposes.}
	\label{fig: RC x A1}
\end{figure}

To apply \Cref{thm: vertex formalism main part} we pick \distdir s at each vertex as indicated in \Cref{fig: RC x A1}. We order the remaining half-edges clockwise for the bottom and anti-clockwise for the top vertices. The resulting \mxp s may be inferred from \Cref{fig: RC x A1} from how we decorate half-edges by partitions. In this situation \Cref{cor: vertex formalism curve class} provides us with the following formula:
\begin{equation*}
	\begin{split}
		& \qquad \GWdiscarg{Z}{\gpT}{}= \\[0.3em]
		&\sum_{\substack{\mu_1,\mu_2 \\ \nu_1,\nu_2}} Q_1^{|\mu_1|+|\mu_2|} Q_2^{|\nu_1|+|\nu_2|} (-1)^{|\nu_1|} \mathcal{W}_{\nu_1,\mu_1,\varnothing}\big(q_2^{-1}\big) ~%
		\mathcal{W}_{\mu_1,\nu_2,\varnothing}\big(q_1 q_2\big) ~ %
		\mathcal{W}_{\mu_2,\nu_2,\varnothing}\big(q_1 q_2^{-1}\big) ~ %
		\mathcal{W}_{\nu_1^{\transpose},\mu_2,\varnothing}\big(q_2\big)\,.
	\end{split}
\end{equation*}
Based on computer experiments we expect that the above sum over partitions can be carried out explicitly to yield the following formula.
\begin{conj}
	\label{conj: closed form RC times A1}
	We have
	\begin{equation*}
		\GWdiscarg{Z}{\gpT}{} = \Exp \left(\frac{\big(q_1^{1/2} + q_1^{3/2}\big)\, Q_1}{\big(1-q_1 q_2\big)\big(1-q_1 q_2^{-1}\big)}\right) \cdot \Exp \left( \frac{\big(1-q_1\big)^2 \, q_2 \, Q_2}{\big(1-q_2\big)^2\big(1-q_1q_2\big)\big(1-q_1 q_2^{-1}\big)}\right)\,.
	\end{equation*}
\end{conj}

\begin{rmk}
	The above observed factorisation into a contribution coming from the resolved conifold and another coming from the resolution of the $A_1$ singularity without the presence of cross-terms conjecturally occurs in more a general situation: Such a factorisation should happen for all products of the form $X \times \cA_r$ where $X$ is a Calabi--Yau threefold and $\cA_r$ is the resolution of the $A_r$ surface singularity. When the torus action on $\cA_r$ is Calabi--Yau the absence of cross-terms is a consequence of the vanishing of the virtual fundamental class due to the nowhere vanishing holomorphic two-form on the surface. Numerical evidence beyond Calabi--Yau torus actions will be presented in \cite{HS26:Experiments}.
\end{rmk}

%\begin{rmk}
%	Conceptionally, the factorisation can be motivated as follows: Modulo perturbative corrections (which can be identified with the $\cA_r$-factor) the M2-brane index on $X \times \cA_r$ engineers rank-$r$ Donaldson--Thomas theory on $X$ \cite[Sec.~5.4 \& 5.5]{NO14:membranes} where K\"{a}hler variables of $\cA_r$ being identified with $\mathop{GL}(r)$-equi\-vari\-ant parameters of the Donaldson--Thomas invariants. The absence of cross-terms is then a consequence of the independence of the latter invariants of these equivariant parameters \cite[Eq.~(5.39)]{dZNPZ22:PlayingIndexMth}.
%\end{rmk}

\medskip

\subsection{\texorpdfstring{$\boldsymbol{\Tot{}_{\bP^2}\big( \cO(-1)^{\oplus 3}\big)}$}{Tot O(-1)\^{}3 -> P2}}
\label{sec: O minus 1 plus 3 on P2}
In this section we consider the fivefold
\begin{equation*}
	Z = \Tot{}_{\bP^2}\big(\cO_{}(-1) \oplus \cO_{}(-1) \oplus \cO_{}(-1)\big) \longrightarrow \bP^2
\end{equation*}
with respect to a specific torus action. The easiest way to describe it is by presenting $Z$ as a quotient
\begin{equation*}
	Z = \Aaff[6] \setminus V(x_0 x_1 x_2) \big/ \Gm 
\end{equation*}
where the torus $\Gm$ acts via
\begin{equation*}
	\big(t,(x_0,x_1,x_2,y_0,y_1,y_2)\big) \longmapsto ( t x_0, t x_1, t x_2, t^{-1} y_0, t^{-1} y_1, t^{-1} y_2 )
\end{equation*}
on affine space. The natural torus action of $\Gm[6]$ on $\Aaff[6]$, whose tangent weights we denote by $\epsilon_0,\epsilon_1,\epsilon_2,\alpha_0,\alpha_1,\alpha_2$, descends to an action on the quotient $Z$. At the fixed point $[1:0:0]\in\bP^2\subset Z$ the tangent weights read
\begin{equation*}
	\epsilon_1 - \epsilon_0\,, \qquad \epsilon_2 - \epsilon_0 \,, \qquad \alpha_0 + \epsilon_0\,, \qquad \alpha_1 + \epsilon_0\,, \qquad \alpha_2 + \epsilon_0
\end{equation*}
and similar for the two other fixed points. We denote by $\gpT\cong \Gm[3]$ the subtorus of $\Gm[6]$ where the following relations hold:
\begin{equation*}
	\alpha_0 = -\epsilon_0 + \epsilon_1 - \epsilon_2 \,, \qquad \alpha_1 = -\epsilon_0 - \epsilon_1 + \epsilon_2  \,, \qquad \alpha_2 = \epsilon_0 - \epsilon_1 - \epsilon_2\,.
\end{equation*}
One checks that the action of $\gpT$ on $Z$ is Calabi--Yau and \locsd\ as illustrated in \Cref{fig: O minus 1 plus 3 on P2}. We choose the \distdir s as indicated in the figure and orient all half-edges at the vertices clockwise. %
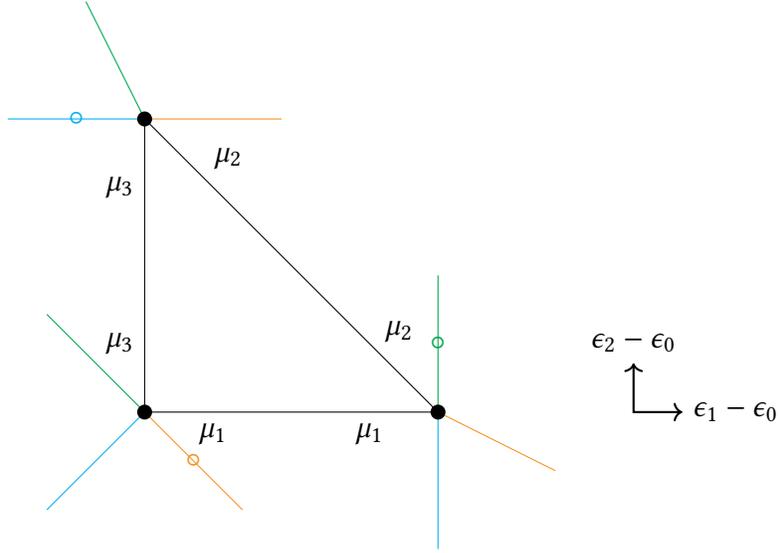
\begin{figure}%
	\centering
	\begin{tikzpicture}[scale=1.3]
		\draw[Cyan] (0,0) -- (-1,-1);
		\draw[Cyan] (0,3) -- (-1.4,3);
		\node[Cyan] at (-0.7,3) {$\circ$};
		\draw[Cyan] (3,0) -- (3,-1.4);
		\draw[Green] (0,0) -- (-1,1);
		\draw[Green] (0,3) -- (-0.6,4.2);
		\draw[Green] (3,0) -- (3,1.4);
		\node[Green] at (3,0.7) {$\circ$};
		\draw[BurntOrange] (0,0) -- (1,-1);
		\node[BurntOrange] at (0.5,-0.5) {$\circ$};
		\draw[BurntOrange] (0,3) -- (1.4,3);
		\draw[BurntOrange] (3,0) -- (4.2,-0.6);
		\draw (0,0) -- (3,0) -- (0,3) -- (0,0);
		\node[circle,inner sep=2pt,fill=black] at (0,0){};
		\node[circle,inner sep=2pt,fill=black] at (3,0){};
		\node[circle,inner sep=2pt,fill=black] at (0,3){};
		\node[right] at (5.5,0) {$\epsilon_1-\epsilon_0$};
		\node[above] at (5,0.5) {$\epsilon_2-\epsilon_0$};
		\draw[<->,thick] (5,0.5) -- (5,0) -- (5.5,0);
		\node[below] at (0.7,0) {$\mu_1$};
		\node[below] at (2.3,0) {$\mu_1$};
		\node[above] at (2.6,0.6) {$\mu_2$};
		\node[right] at (0.6,2.6) {$\mu_2$};
		\node[left] at (0,0.7) {$\mu_3$};
		\node[left] at (0,2.3) {$\mu_3$};		
	\end{tikzpicture}
	\caption{The $\gpT$-diagram of $\Tot \cO_{\bP^2}(-1)^{\oplus 3}$ with a choice of a \distdir\ at each vertex and half-edges decorated by partitions.}
	\label{fig: O minus 1 plus 3 on P2}
\end{figure}%
Applying \Cref{cor: vertex formalism curve class} to this setup yields the formula
\begin{equation}
	\label{eq: formula Omin1 plus 3 P2}
\begin{split}
	\GWdiscarg{Z}{\gpT}{} = \sum_{\mu_1, \mu_2, \mu_3}   (-Q)^{|\mu_1|+|\mu_2|+|\mu_3|} \big(q_1^{-1} q_2\big)^{\kappa(\mu_1)/2} \big(q_2^{-1} q_0\big)^{\kappa(\mu_2)/2}  \big(q_0^{-1}q_1\big)^{\kappa(\mu_3)/2}&  \\[-0.3em]
	\times \cW_{\mu_1,\mu_2,\varnothing}\big(q_2 q_0^{-1}\big) ~
	\cW_{\mu_2,\mu_3,\varnothing}\big(q_0 q_1^{-1}\big) ~ %
	\cW_{\mu_3,\mu_1,\varnothing}\big(q_1 q_2^{-1}\big) & \,. %
\end{split}
\end{equation}
The author is not aware of any trick that allows one to carry out the above sum explicitly. However, one may still use the formula to determine $\Omega_{d[H]}$ in low degree $d$. As formula \eqref{eq: formula Omin1 plus 3 P2} inherited the full $\mf{S}_3$ Weyl symmetry of $\bP^2$ we may expand the \MemInds\ in terms of elementary symmetric polynomials:
\begin{equation*}
	e_1 = q_0^{1/2} + q_1^{1/2} + q_2^{1/2}\,, \qquad e_2 = (q_0 q_1)^{1/2} + (q_0 q_2)^{1/2} + (q_1 q_2)^{1/2}\,, \qquad e_3 = (q_0 q_1 q_2)^{1/2}\,.
\end{equation*}
The resulting expressions become particularly nice when normalised by the symmetrised $q$-number
\begin{equation*}
	[n]_q \coloneqq \frac{q^{n/2} - q^{-n/2}}{q^{1/2} - q^{-1/2}} \,.
\end{equation*}
Moreover, let us write $\overline{q}_i \coloneqq q_{i}^{1/2} q_{i+1}^{-1/2}$. With this notation the \MemInds\ extracted from \eqref{eq: formula Omin1 plus 3 P2} in low degree read:
{\allowdisplaybreaks%
\begin{align*}
	\Omega_{1[H]} & = - \prod_{i=0}^2\frac{1}{[2]_{\overline{q}_i} }  \\[0.5em]
	\Omega_{2[H]} & = \prod_{i=0}^2\frac{1}{[2]_{\overline{q}_i} }  \\[0.5em]
	\Omega_{3[H]} & = \prod_{i=0}^2\frac{1}{[2]_{\overline{q}_i } [4]_{\overline{q}_i} } \\
	& \qquad \times \left(-e_1^4 e_2^4 e_3^{-4}+3 e_1^2 e_2^5 e_3^{-4}-e_2^6 e_3^{-4}+3 e_1^5 e_2^2 e_3^{-3}-8 e_1^3 e_2^3 e_3^{-3}-e_1^6 e_3^{-2} +11 e_1^2 e_2^2 e_3^{-2} \right. \\
	& \qquad \hspace{1.3em}\left.-3 e_2^3 e_3^{-2} -3 e_1^3 e_3^{-1}\right) \\[0.5em]
	\Omega_{4[H]} & = \prod_{i=0}^2\frac{[3]_{\overline{q}_i} }{ [2]_{\overline{q}_i} [4]_{\overline{q}_i} [6]_{\overline{q}_i} } \\
	& \qquad \times\left(e_1^9 e_2^9 e_3^{-9}-8 e_1^7 e_2^{10} e_3^{-9}+21 e_1^5 e_2^{11} e_3^{-9}-19 e_1^3 e_2^{12} e_3^{-9}+3 e_1 e_2^{13} e_3^{-9}-8 e_1^{10} e_2^7 e_3^{-8} \right.\\ 
	&\qquad \hspace{1.3em} +63 e_1^8 e_2^8 e_3^{-8}-155 e_1^6 e_2^9 e_3^{-8}+109 e_1^4 e_2^{10} e_3^{-8}+15 e_1^2 e_2^{11} e_3^{-8}-2 e_2^{12} e_3^{-8} \\
	&\qquad \hspace{1.3em} +21 e_1^{11} e_2^5 e_3^{-7}-155 e_1^9 e_2^6 e_3^{-7}+295 e_1^7 e_2^7 e_3^{-7}+56 e_1^5 e_2^8 e_3^{-7}-332 e_1^3 e_2^9 e_3^{-7}\\
	&\qquad \hspace{1.3em} +8 e_1 e_2^{10} e_3^{-7}-19 e_1^{12} e_2^3 e_3^{-6}+109 e_1^{10} e_2^4 e_3^{-6}+56 e_1^8 e_2^5 e_3^{-6}-958 e_1^6 e_2^6 e_3^{-6}\\
	&\qquad \hspace{1.3em} +816 e_1^4 e_2^7 e_3^{-6}+294 e_1^2 e_2^8 e_3^{-6}-3 e_2^9 e_3^{-6}+3 e_1^{13} e_2 e_3^{-5}+15 e_1^{11} e_2^2 e_3^{-5}\\
	&\qquad \hspace{1.3em} -332 e_1^9 e_2^3 e_3^{-5}+816 e_1^7 e_2^4 e_3^{-5}+492 e_1^5 e_2^5 e_3^{-5}-1356 e_1^3 e_2^6 e_3^{-5}-129 e_1 e_2^7 e_3^{-5}\\
	&\qquad \hspace{1.3em} -2 e_1^{12} e_3^{-4}+8 e_1^{10} e_2 e_3^{-4}+294 e_1^8 e_2^2 e_3^{-4}-1356 e_1^6 e_2^3 e_3^{-4}+554 e_1^4 e_2^4 e_3^{-4}\\
	&\qquad \hspace{1.3em} +950 e_1^2 e_2^5 e_3^{-4}+35 e_2^6 e_3^{-4}-3 e_1^9 e_3^{-3}-129 e_1^7 e_2 e_3^{-3}+950 e_1^5 e_2^2 e_3^{-3}\\
	&\qquad \hspace{1.3em} -690 e_1^3 e_2^3 e_3^{-3}-374 e_1 e_2^4 e_3^{-3}+35 e_1^6 e_3^{-2}-374 e_1^4 e_2 e_3^{-2}+305 e_1^2 e_2^2 e_3^{-2}\\
	&\qquad \hspace{1.3em} \left.+70 e_2^3 e_3^{-2}+70 e_1^3 e_3^{-1}-60 e_1 e_2 e_3^{-1}\right)
\end{align*}}

\begin{rmk} \label{rmk: features of O min 1 oplus 3 on P2}
	Let us emphasise the following remarkable features of the above formulae.
	\begin{enumerate}
		\item First, observe that all formulae are in agreement with \Cref{conj: generalised GV}: All expressions are elements in localised equivariant K-theory with integer coefficients.
		
		\item However, in the limit $\overline{q}_i \rightarrow 1$ coefficients in the above formulae feature negative powers of $2$. This is in accordance with \Cref{conj: generalised GV} since this is precisely the limit in which the torus action becomes non-skeletal.
		
		\item It should also be stressed that powers of $2$ are indeed the worst denominators that appear. The cancellations ensuring this are surprisingly fine-tuned and based on numerical data we conjecture the following general behaviour in higher degree.
		
%		\item The formulae feature half-integer powers of the representations $q_i$. This is in accordance with the comment in \Cref{conj: generalised GV} that one may have to pass to a cover of the original torus in order to ensure that a lift from Cohomology to K-theory indeed exists.
	\end{enumerate}
\end{rmk}

\begin{conj}
	\label{conj: O minus 1 plus 3 on P2}
	For any $d > 1$ we have
	\begin{equation*}
		\left(\prod_{i=0}^2 \prod_{n=1}^{d-1}\frac{ [2n]_{\overline{q}_i} }{[ \mr{od}(n)]_{\overline{q}_i}}\right) \cdot \Omega_{d[H]} \in \bZ[e_1^{\pm 1}, e_2,e_3]
	\end{equation*}
	where $\mr{od}(n)$ denotes the odd part of an integer $n$.
\end{conj}

We checked this conjecture numerically up to degree ten. Conjectural formulae for low degree $\Omega_{\beta}$ on the full four-dimensional Calabi--Yau torus of $Z$ will be presented in \cite{HS26:Experiments}.

\medskip

\subsection{\texorpdfstring{$\boldsymbol{\Tot{}_{\bP^3}\big( \cO(-2)^{\oplus 2}\big)}$}{Tot O(-2)\^{}2 -> P3}}
\label{sec: O min 2 oplus 2 on P3}
In this section we discuss
\begin{equation*}
	Z = \Tot{}_{\bP^3}\big( \cO(-2) \oplus \cO(-2)\big) \longrightarrow \bP^3 \,.
\end{equation*}
As in the last section we present this variety as a quotient
\begin{equation*}
	Z = \Aaff[6] \setminus V(x_0 x_1 x_2 x_3) \big/ \Gm
\end{equation*}
where the torus acts on affine space via
\begin{equation*}
	\big(t,(x_0,x_1,x_2,x_3,y_0,y_1)\big) \longmapsto ( t x_0, t x_1, t x_2, t x_3, t^{-2} y_0, t^{-2} y_1 )\,.
\end{equation*}
The action of $\Gm[6]$ on $\Aaff[6]$, whose tangent weights we denote by $\epsilon_0,\epsilon_1,\epsilon_2, \epsilon_3,\alpha_0,\alpha_1$, descends to the quotient $Z$. We will analyse the equivariant Gromov--Witten theory of $Z$ with respect to a four-dimensional torus $\gpT\subset \Gm[6]$ which is subject to the constraints $\alpha_0=-\epsilon_1$ and $\alpha_1 = \epsilon_0-\epsilon_2-\epsilon_3$. One can check that this torus action is Calabi--Yau and \locsd. Indeed, for instance at the fixed point $[1:0:0:0]\in \bP^3\subset Z$ we find the tangent weights
\begin{equation*}
	\epsilon_1 - \epsilon_0\,, \qquad \epsilon_2 - \epsilon_0\,, \qquad \epsilon_3 - \epsilon_0\,, \qquad -\epsilon_1 + \epsilon_0 \,,\qquad 2 \epsilon_0 -\epsilon_2 -\epsilon_3\,.
\end{equation*}
The $\gpT$-diagram of $Z$ which is displayed in \Cref{fig: T diagram O min 2 oplus 2 on P3} indicates the tangent weights at the remaining fixed points.
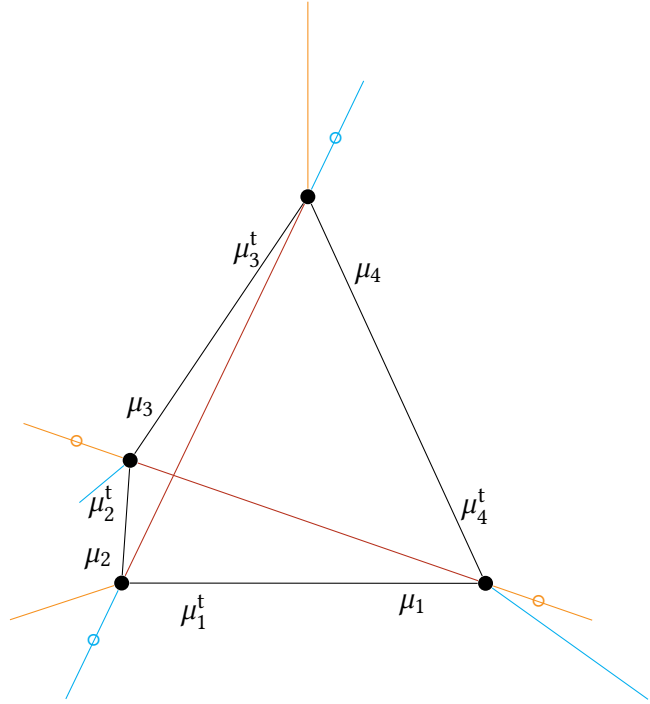
\begin{figure}
\centering
\begin{tikzpicture}[x={({cos(19.1)},{-sin(19.1)},0)},z={({-sin(40)},{-cos(40)},0)},scale=2.5]
	\foreach \pos [count=\Ind] in {{0.00, 0.00, 0.00},{2.00, 0.00, 0.00},{1.00, 1.73, 0.00},{1.00, 0.58, 1.63}}{
		\node[circle,inner sep=2pt,fill=black](p\Ind) at (\pos){};
	}
	%
%	\draw[thick, yellow] (p1) -- (p3) -- (p2) -- (p4) -- (p1);
	\draw (p1) -- (p3) -- (p2) -- (p4) -- (p1);
%	\draw[line width=3pt,red] (p1) -- (p2);
%	\draw[line width=3pt,red] (p3) -- (p4);
	\draw[BrickRed] (p1) -- (p2);
	\draw[BrickRed] (p3) -- (p4);
	\node[below] at ($(p2)!0.2!(p4)$) {$\mu_1$};
	\node[below] at ($(p2)!0.8!(p4)$) {$\mu_1^{\transpose}$};
	\node[left] at ($(p4)!0.2!(p1)$) {$\mu_2$};
	\node[left] at ($(p4)!0.65!(p1)$) {$\mu_2^{\transpose}$};
	\node[left] at ($(p1)!0.2!(p3)$) {$\mu_3$};
	\node[left] at ($(p1)!0.8!(p3)$) {$\mu_3^{\transpose}$};
	\node[right] at ($(p3)!0.2!(p2)$) {$\mu_4$};
	\node[right] at ($(p3)!0.8!(p2)$) {$\mu_4^{\transpose}$};
	\draw[Cyan] (p3) -- (1., 2.075, -0.489);
	\node[Cyan] at (1., 1.9025, -0.2445) {$\circ$};
	\draw[Cyan] (p4) -- (1., 0.235, 2.119);
	\node[Cyan] at (1., 0.4075, 1.8745) {$\circ$};
	\draw[BurntOrange] (p1) -- (-0.6, 0., 0.);
	\node[BurntOrange] at (-0.3, 0., 0.) {$\circ$};
	\draw[BurntOrange] (p2) -- (2.6, 0., 0.);
	\node[BurntOrange] at (2.3, 0., 0.) {$\circ$};
	\draw[Cyan] (p1) -- (-0.6, -0.693, -0.489);
	\draw[Cyan] (p2) -- (2.6, -0.693, -0.489);
	\draw[BurntOrange] (p3) -- (1., 2.768, 0.);
	\draw[BurntOrange] (p4) -- (1., 0.928, 2.608);
\end{tikzpicture}
\caption{The $\gpT$-diagram of $\Tot \cO_{\bP^3}(-2)^{\oplus 2}$ with a choice of a \distdir\ at each vertex and half-edges decorated by partitions.}
\label{fig: T diagram O min 2 oplus 2 on P3}
\end{figure}

Now note that despite the fact that the $\gpT$-action on $Z$ is skeletal, Calabi--Yau and \locsd\ we cannot readily apply \Cref{cor: vertex formalism curve class} to compute the Gromov--Witten invariants of $Z$  --- at least not in the form it is stated. The problem is that the class of a line can be supported on both of the two \sd\ edges of the $\gpT$-diagram which are highlighted red in \Cref{fig: T diagram O min 2 oplus 2 on P3}. However, as explained in \Cref{rmk: weaker condition curve class} the conclusion of \Cref{cor: vertex formalism curve class} still holds if we can show that $$\GWdiscarg{Z}{\gpT}{\boldsymbol{d}}=0$$ whenever $\boldsymbol{d}$ has non-trivial support on one of the two red edges. So let $F_\gamma$ be a component of $\gpT$-fixed locus of $\Mbar(Z,\beta)$ parametrising stable maps with non-zero support on one of the red edges. It suffices to show that $[F_\gamma]^{\vir}=0$. Indeed, the restriction of the vector bundle $Z \rightarrow \bP^3$ to each of the red edges is isomorphic to $\cO_{\bP^1}(-2) \oplus \cO_{\bP^1}(-2)$. Moreover, $\mgp{T}$ acts on one of the line bundles in a way that the holomorphic two-form of $\Tot{}_{\bP^1} ( \cO(-2) )$ is fixed. This nowhere vanishing $\mgp{T}$-invariant holomorphic two-form thus yields a trivial factor in the obstruction bundle of $F_\gamma$. This implies the vanishing of the virtual class as desired.

Hence, we may apply the conclusion of \Cref{cor: vertex formalism curve class} to our case at hand. Note that by the vanishing we have just shown we can decorate the red edges with trivial partitions as indicated in \Cref{fig: T diagram O min 2 oplus 2 on P3}. We fix a cyclic order at each vertex $v$ by demanding that $\sigma_v$ maps the half-edge decorated with $\mu_i$ to $\mu_{i+1}$. Together with the choice of \distdir s indicated in \Cref{fig: T diagram O min 2 oplus 2 on P3} this implies that all edges have negative \mxp . Let us write $Q\coloneqq Q^{[L]}$ where $[L]$ is the class of a line in $\bP^3$. We obtain the following formula for the Gromov--Witten invariants of $Z$:
\begin{equation}
\label{eq: formula O min 2 oplus 2 on P3}
\begin{split}
	&\qquad \GWdiscarg{Z}{\gpT}{} = \\[0.3em]
	&\sum_{\substack{\mu_1,\mu_2,\\\mu_3,\mu_4}} Q^{\sum_{i=1}^4|\mu_i|}  \big(q_0^{-2}q_1^2 q_2^{-1} q_3\big)^{\kappa(\mu_1)/2} \big(q_0 q_1^{-1} q_2^{-2} q_3^2\big)^{\kappa(\mu_2)/2}  \big(q_0^{2}q_1^{-2} q_2 q_3^{-1}\big)^{\kappa(\mu_3)/2} \big(q_0^{-1} q_1 q_2^{2} q_3^{-2}\big)^{\kappa(\mu_4)/2} \\[-0.8em]
	& \hspace{4.5em} \times \cW_{\mu_4^{\transpose},\mu_1,\varnothing}\big(q_0 q_1^{-1}\big) ~ \cW_{\mu_1^{\transpose},\mu_2,\varnothing}\big(q_2 q_3^{-1}\big) ~ \cW_{\mu_2^{\transpose},\mu_3,\varnothing}\big(q_0^{-1} q_1\big) ~ \cW_{\mu_3^{\transpose},\mu_4,\varnothing}\big(q_2^{-1} q_3\big) \,. 
\end{split}
\end{equation}

As in the last example we are not able to carry out the above sums explicitly but we may still employ the formula to extract \MemInds\ in low degree. Experimentally we observe the following.

\begin{conj}
	\label{conj: O min 2 oplus 2 on P3}
	$\Omega_{d[L]}$ is a Laurent polynomial in $q_0$, $q_1$, $q_2$ and $q_3$.
\end{conj}

Moreover, observe that formula \eqref{eq: formula O min 2 oplus 2 on P3} only depends on the characters $q_0 q_1^{-1}$ and $q_2 q_3^{-1}$ and is invariant under $q_0 q_1^{-1} \rightarrow q_0^{-1} q_1$ and $q_2 q_3^{-1} \rightarrow q_2^{-1} q_3$. The latter symmetry is inherited from the $\bZ/2\bZ \times \bZ/2\bZ$ symmetry of \Cref{fig: T diagram O min 2 oplus 2 on P3}. Hence, assuming the absence of poles other than zero or infinity, the \MemInds\ can be expanded as
\begin{equation*}
	\Omega_{d[L]} \eqqcolon \sum_{k_1,k_2\geq 1} N_{d;k_1,k_2} \, [k_1]_{q_0q_1^{-1}} \, [k_2]_{q_2q_3^{-1}}
\end{equation*}
with $N_{d;k_1,k_2}\in\bZ$. In degree $d\leq 3$ the only non-vanishing invariants are:
\begin{equation*}
	N_{2;1,1} = 2 \, , \qquad N_{3;4,4} = -2\,.
\end{equation*}
All non-zero invariants for $4\leq d \leq 6$ are listed in \Crefrange{tab: d4 GV O min 2 oplus 2 on P3}{tab: d6 GV O min 2 oplus 2 on P3}. We checked that \Cref{conj: O min 2 oplus 2 on P3} holds up to degree ten numerically. Conjectural formulae for low-degree \MemInds\ on the full four dimensional Calabi--Yau torus of $Z$ will be presented in \cite{HS26:Experiments}.
\begin{table}[H]
	\centering
	\begin{minipage}{.4\linewidth}
	\centering
	\begin{tabular}{c|ccccc}
			& 1 & 3 & 5 & 7 & 9 \\ \hline
			1 & -1 &   & 2 &   & 2 \\
			3 &   & 2 &   &   &   \\
			5 & 2 &   & 2 &   & 2 \\
			7 &   &   &   &   &   \\
			9 & 2 &   & 2 &   & 2 
	\end{tabular}
	\caption{$N_{4;k_1,k_2}$}
	\label{tab: d4 GV O min 2 oplus 2 on P3}
	\end{minipage}%
	\begin{minipage}{.6\linewidth}
	\centering
	\begin{tabular}{c|cccccccc}
		& 2 & 4 & 6 & 8 & 10 & 12 & 14 & 16 \\ \hline
		2 & -2 &   & -2 & -2 &   &   &   &   \\
		4 &   & 2 & -2 & -2 & -2 & -2 &   & -2 \\
		6 & -2 & -2 & -4 & -4 & -2 & -2 &   & -2 \\
		8 & -2 & -2 & -4 & -4 & -2 & -2 &   & -2 \\
		10 &   & -2 & -2 & -2 & -2 & -2 &   & -2 \\
		12 &   & -2 & -2 & -2 & -2 & -2 &   & -2 \\
		14 &   &   &   &   &   &   &   &   \\
		16 &   & -2 & -2 & -2 & -2 & -2 &   & -2
	\end{tabular}
	\caption{$N_{5;k_1,k_2}$}
	\label{tab: d5 GV O min 2 oplus 2 on P3}
\end{minipage}%
\end{table}

\begin{table}[H]
	\centering
	\begin{tabular}{c|ccccccccccccc}
		& 1 & 3 & 5 & 7 & 9 & 11 & 13 & 15 & 17 & 19 & 21 & 23 & 25 \\ \hline
		1 & 7 & -3 & 7 & 1 & 10 & 4 & 12 & 4 & 8 & 4 & 4 &   & 4 \\
		3 & -3 & -3 & -1 & 3 & 2 & 2 & 2 & 2 &   &   &   &   &   \\
		5 & 7 & -1 & 13 & 5 & 14 & 6 & 14 & 6 & 8 & 4 & 4 &   & 4 \\
		7 & 1 & 3 & 5 & 9 & 10 & 6 & 10 & 6 & 4 & 2 & 2 &   & 2 \\
		9 & 10 & 2 & 14 & 10 & 18 & 8 & 20 & 8 & 12 & 6 & 6 &   & 6 \\
		11 & 4 & 2 & 6 & 6 & 8 & 4 & 8 & 4 & 4 & 2 & 2 &   & 2 \\
		13 & 12 & 2 & 14 & 10 & 20 & 8 & 20 & 8 & 12 & 6 & 6 &   & 6 \\
		15 & 4 & 2 & 6 & 6 & 8 & 4 & 8 & 4 & 4 & 2 & 2 &   & 2 \\
		17 & 8 &   & 8 & 4 & 12 & 4 & 12 & 4 & 8 & 4 & 4 &   & 4 \\
		19 & 4 &   & 4 & 2 & 6 & 2 & 6 & 2 & 4 & 2 & 2 &   & 2 \\
		21 & 4 &   & 4 & 2 & 6 & 2 & 6 & 2 & 4 & 2 & 2 &   & 2 \\
		23 &   &   &   &   &   &   &   &   &   &   &   &   &   \\
		25 & 4 &   & 4 & 2 & 6 & 2 & 6 & 2 & 4 & 2 & 2 &   & 2 
	\end{tabular}
	\caption{$N_{6;k_1,k_2}$}
	\label{tab: d6 GV O min 2 oplus 2 on P3}
\end{table}

%-------------------------------------------------------------------------------
\section{Proof of the vertex formalism}
\label{sec: proof vertex formalism}
In this \namecref{sec: proof vertex formalism} we will prove \Cref{thm: vertex formalism main part}. Throughout this section let $Z$ be a Calabi--Yau fivefold with a skeletal, Calabi--Yau and \locsd\ action by a torus $\gpT$. Moreover, we fix a choice of \distdir s and orders. We will deduce the vertex formula for Gromov--Witten invariants of $Z$ from capped localisation. This method repackages the formula for Gromov--Witten invariants resulting from torus localisation in a more practical form. This idea was pioneered by Li--Liu--Liu--Zhou in \cite{LLLZ09:MathTopVert} (see also \cite{LLZ03:MarinoVafaFormula,LLZ07:2PartnHodgeInt}) and was key in Maulik--Oblomkov--Okunkov--Pandharipande's proof of the Gromov--Witten/Donaldson--Thomas correspondence for toric threefolds \cite{MOOP11:GWDTtoric}.

\subsection{Torus localisation}
Recall from \Cref{sec: GW invariants} that we refer to an assignment of a non-negative integer to each edge of the $\gpT$-diagram of $Z$ as a skeletal degree. Given such an assignment $\boldsymbol{d}:E(\Gamma) \rightarrow \bZ_{\geq 0}$ we denoted by
\begin{equation}
	\label{eq: GW torus localisation 2}
	\GWdiscarg{Z}{\gpT}{\boldsymbol{d}} \coloneqq \sum_{\substack{\gamma \text{ has skeletal} \\ \text{support }\boldsymbol{d}}} ~~ \int_{[F_{\gamma}]^{\vir}} \frac{1}{e_{\gpT}(N^{\vir}_\gamma)}
\end{equation}
the contribution of all $\gpT$-fixed loci parametrising stable maps whose cover of the torus orbit associated with an edge $e$ has degree $d_e$.

\subsection{Capped localisation}
Each term on the right-hand side of \eqref{eq: GW torus localisation 2} can be written as a product of weights labelled by vertices and edges of the $\gpT$-diagram. Vertices are weighted by Hodge integrals and edges by some closed-form combinatorial factors. The idea of capped localisation is to repackage this decomposition in a way that vertex weights become relative Gromov--Witten invariants of a partial compactification of $\Aaff[5]$ and edge weights become relative invariants of a vector bundle over a rational curve relative to two fibres. The result is a formula expressing the Gromov--Witten invariant with skeletal degree $\boldsymbol{d}$ as a weighted sum over so-called capped markings which record the relative conditions at the interface between partially compactified vertices and edges. To be precise, by a \define{capped marking} $\boldsymbol{\nu}$ we mean the assignment of a partition $\nu_h$ to every half-edge $h$ of the $\gpT$-diagram of $Z$ satisfying
\begin{itemize}
	\item $\nu_h = \varnothing$ whenever $h$ is a leaf;
	
	\item $|\nu_h| = |\nu_{h'}| = d_e$ when $e=(h,h')$.
\end{itemize}
We denote by $\cP_{\Gamma,\boldsymbol{d}}$ the set of all such capped markings. We remark that since we assume $\boldsymbol{d}$ is supported away from \sd\ strata we have $\nu_h = \varnothing$ whenever $h$ is an \sd\ half-edge. This assumption will become crucial later in our proof (see \Cref{rmk: support 1,rmk: support 2}).

In the following we will describe the partial compactifications of vertices and edges and their associated weights needed in order to state the capped localisation formula. We do so by lifting the constructions described in \cite[Sec.~7]{LLLZ09:MathTopVert} and \cite[Sec.~2.3]{MOOP11:GWDTtoric} from the threefold to our fivefold setting.

\subsubsection{Capped vertices}
\label{sec: capped vertex}
We begin by discussing the local model for partial compactifications of vertices in $\Gamma$. For this denote by $U$ the result of blowing up $\bP^1\times\bP^1\times\bP^1$ along the lines
\begin{equation*}
	\infty \times \bP^1 \times 0 \,,\qquad 0 \times \infty \times \bP^1\,,\qquad \bP^1 \times 0 \times \infty 
\end{equation*}
and deleting the pre-image of the three $\Gm[3]$-preserved lines through $(\infty,\infty,\infty)$ under the blow-up morphism. The construction provides us with a partial compactification of $\Aaff[3]\ni (0,0,0)$ where the $i$\textsuperscript{th} coordinate axis is compactified to a $\bP^1$ with normal bundle $N_{\bP^1}U = \cO_{\bP^1} \oplus \cO_{\bP^1}(-1)$. We denote by $D_i\subset U$ the divisor at $\infty\in \bP^1$. Note that $D_1$, $D_2$ and $D_3$ are pairwise disjoint and that whenever the action of a torus $\gpT'\subset \Gm[3]$ on $\Aaff[3]$ is Calabi--Yau, then its action on $D_i \cong \Aaff[2]$ will be so as well.

Now let $v$ be a vertex of the $\gpT$-diagram $\Gamma$ of $Z$. Locally at the associated fixed point $p_v$ the fivefold $Z$ looks like $\Aaff[5] \cong T_{p_v} Z$ with the induced torus action. The choice of a \distdir\ $\epsilon_v$ and cyclic order $\sigma_v = (h_1\, h_2 \, h_3)$ singles out a decomposition
\begin{equation}
	\label{eq: TpvZ decomposition}
	T_{p_v} Z \cong \Aaff[5] = \Aaff[3] \times \Aaff[2] 
\end{equation}
where $\gpT$ acts on the coordinate lines of $\Aaff[2]$ with opposite weights $\epsilon_v$ and $-\epsilon_v$ and the action on $\Aaff[3]$ is Calabi--Yau. We choose the following partial compactification at $p_v$:
\begin{equation*}
	(\widetilde{U}\vertspace \widetilde{D}_{h_1} + \widetilde{D}_{h_2} + \widetilde{D}_{h_3}) \coloneqq (U \times \Aaff[2] \vertspace D_1 \times \Aaff[2] + D_2 \times \Aaff[2] + D_3 \times \Aaff[2])\,.
\end{equation*}
The $\gpT$-action on $\Aaff[3] \times \Aaff[2]$ lifts to $U \times \Aaff[2]$. Its tangent weights at the origin of $D_i \times \Aaff[2] \cong \Aaff[4]$ are
\begin{equation*}
	\epsilon_{\sigma(h_i)}, - \epsilon_{\sigma(h_i)}, \epsilon_v, -\epsilon_v\,.
\end{equation*}

Now suppose we are given a capped marking $\boldsymbol{\nu}$. Then only the partitions $\nu_{h_1}$, $\nu_{h_2}$ and $\nu_{h_3}$ of the half-edges adjacent to $v$ may be non-trivial. We weight the vertex $v$ by the relative Gromov--Witten invariants of $(\widetilde{U}\vertspace \widetilde{D}_{h_1} + \widetilde{D}_{h_2} + \widetilde{D}_{h_3})$:
\begin{equation*}
	\widetilde{C}(v, \boldsymbol{\nu}) \coloneqq \sum_{g\in \bZ} %u^{2g-2+\sum_i \ell(\nu_{h_i})}
	~\int_{[\Mbar{}_{g}^\bullet(\widetilde{U}\vertspace \widetilde{D}_{h_1} + \widetilde{D}_{h_2} + \widetilde{D}_{h_3},\nu_{h_1},\nu_{h_2},\nu_{h_3})]^{\vir}_{\gpT}} 1 \,.
\end{equation*}

\begin{rmk}
	\label{rmk: support 1}
	We want to stress that due to our assumption that $\boldsymbol{d}$ is supported away from \sd\ strata we do not need to compactify vertices in the two \sd\ directions as the associated half-edges are decorated with empty partitions. This fact is going to be crucial for evaluating the vertex weights later in \Cref{cor: vertex weight formula}.
\end{rmk}

\subsubsection{Capped edges}
\label{sec: capped edges}
Let $e$ be an edge with none of its half-edges $h$ and $h'$ being an \sd\ half-edge. At the vertices linked by $e$ we have already chosen a partial compactification whose interface at $h$ and $h'$ is the divisor $\widetilde{D}_{h}\cong \Aaff[4]$ and $\widetilde{D}_{h'}$ respectively. Now let $\gpT$ act on $\bP^1$ with tangent weight $-\epsilon_h$ at $0$ and $-\epsilon_{h'} = \epsilon_h$ at $\infty$. There is a $\gpT$-equivariant rank-four vector bundle $V$ on $\bP^1$ whose fibres at $0$ and $\infty$ are $\widetilde{D}_{h}$ and $\widetilde{D}_{h'}$ compatible with the $\gpT$-actions.

The vector bundle $V$ will serve as the partial compactification for the edge $e$. For a capped marking $\boldsymbol{\nu}$ we weight $e$ by the relative Gromov--Witten invariant
\begin{equation*}
	\widetilde{E}(e,\boldsymbol{\nu}) \coloneqq \sum_{g\in \bZ} %u^{2g-2+\ell(\nu_{h})+\ell(\nu_{h'})} 
	~\int_{[\Mbar{}^\bullet_g(V \vertspace \widetilde{D}_{h} + \widetilde{D}_{h'} , \nu_h , \nu_{h'})]^{\vir}_{\gpT}}1\,.
\end{equation*}

\subsubsection{The formula}
Capped localisation then yields the following formula:
\begin{equation}
	\label{eq: capped localisation}
	\GWdiscarg{Z}{\gpT}{\boldsymbol{d}} = \sum_{\boldsymbol{\nu} \in \cP_{\Gamma,\boldsymbol{d}}} \prod_{e\in E(\Gamma)} \widetilde{E}(e,\boldsymbol{\nu}) \prod_{v\in V(\Gamma)} \widetilde{C}(v,\boldsymbol{\nu}) \prod_{h\in H(\Gamma)}\mf{z}(\nu_h) \left(\epsilon_{\sigma_v(h)} \epsilon_v \right)^{2\ell(\nu_{h})} \,.
\end{equation}
The last product features the gluing terms associated with each interface $\widetilde{D}_h$ where $\ell(\nu)$ denotes the length of a partition $\nu$ and $\mf{z}(\nu) = |\Aut(\nu)| \prod_{i} \nu_i$. As discussed in \cite[Sec.~2.4]{MOOP11:GWDTtoric} and explained in full detail in \cite[Sec.~7]{LLLZ09:MathTopVert} formula \eqref{eq: capped localisation} may be proven by inserting appropriate rubber integrals of $\widetilde{D}_h \times \bP^1$ and combinatorial factors at the half-edges.

\subsection{Simplification of the capped localisation formula}
We will deduce our vertex formula \eqref{eq: vertex formula main part} from \eqref{eq: capped localisation} by explicitly evaluating the vertex and edge weights and reorganising the resulting expression.

\subsubsection{Vertex weights}
We recall the topological vertex formula for the relative Gromov--Witten invariants of the partial compactification $(U\vertspace D_1 + D_2 + D_3)$ of $\Aaff[3]$ described in \Cref{sec: capped vertex}. Let $\gpT$ be a torus acting on $\Aaff[3]$.

\begin{thm}
	\label{thm: vertex formula}
	Let $\nu_1$, $\nu_2$, $\nu_3$ be partitions. If the $\gpT$-action on $\Aaff[3]$ is Calabi--Yau we have
	\begin{equation*}
	\begin{split}
		&\sum_{g} u^{2g-2+\sum_i \ell(\nu_{i})} \int_{[\Mbar{}_{g}^\bullet({U}\vertspace {D}_{1} + {D}_{2} + {D}_{3},\nu_{1},\nu_{2},\nu_{3})]^{\vir}_{\gpT}} 1 \\
		& \qquad\qquad= \left(\prod_{i=1}^3 (-1)^{|\nu_i|} (\ri \epsilon_{i+1})^{-\ell(\nu_{i})}\right) \sum_{\lambda_1,\lambda_2,\lambda_3}  \cW_{\lambda_1,\lambda_2,\lambda_3}(\re^{\ri u}) ~ \prod_{i=1}^3 \frac{\chi_{\lambda_i}(\nu_i)}{\mf{z}(\nu_i)}
	\end{split}
	\end{equation*}
	where $\chi_{\lambda}(\nu)$ is the character of the irreducible representation labelled by a partition $\lambda$ evaluated at the conjugacy class $\nu$.
\end{thm}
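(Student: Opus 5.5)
This is the threefold topological vertex theorem for relative invariants of $(U\vertspace D_1+D_2+D_3)$. I would not reprove it from scratch. Instead I would deduce it from the mathematical topological vertex of Li--Liu--Liu--Zhou \cite{LLLZ09:MathTopVert}, supplemented by the relative GW/DT correspondence of \cite{MOOP11:GWDTtoric}, and spend the effort on matching conventions.

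The starting point is the formal relative vertex $\hat{C}_{\vec\nu}(\lambda;\boldsymbol{w})$ of \cite[Sec.~7]{LLLZ09:MathTopVert}. It is defined exactly as the generating series of disconnected relative invariants of the partial compactification $(\hat{Y}\vertspace \hat{D})$ of $\Aaff[3]$ with three disjoint divisors at infinity, and $U$ is the same space. Li--Liu--Liu--Zhou prove two facts about it.
\begin{enumerate}
\item For a Calabi--Yau torus, $\hat{C}$ is independent of the framing and equals the combinatorial vertex $\cW$ after the change of basis from the relative conditions $\nu_i$ to Schur labels $\lambda_i$.
\item That change of basis is implemented by $\chi_{\lambda}(\nu)/\mf{z}(\nu)$, via Frobenius' formula $p_\nu=\sum_\lambda \chi_\lambda(\nu)s_\lambda$.
\end{enumerate}
The identification $q=\re^{\ri u}$ comes from the genus variable $u$. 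This is the standard Mari\~no--Vafa/LLZ convention, in which $u^{2g-2}$ pairs with $(q^{1/2}-q^{-1/2})^{2g-2}$.

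The work is in the prefactors. Restricting to the Calabi--Yau subtorus, the relative tangent weights at $D_i$ are $\pm\epsilon_{i+1}$, so I would read each factor off as follows.
\begin{itemize}
\item The term $(\ri\epsilon_{i+1})^{-\ell(\nu_i)}$ arises from normalising the relative insertions. Each boundary point contributes an equivariant weight of the fibre direction of $D_i$, and the power of $\ri$ accompanies the substitution $u\mapsto \ri u$ combined with the $u^{\ell(\nu_i)}$ in the exponent.
\item The sign $(-1)^{|\nu_i|}$ comes from LLLZ's sign conventions for relative conditions and framing, i.e.\ their $\sqrt{-1}^{\ell}$ and $(-1)^{|\nu|}$ factors.
\end{itemize}
To confirm these, I would check the one-leg case $\nu_2=\nu_3=\varnothing$ against the Mari\~no--Vafa formula (Liu--Liu--Zhou, Okounkov--Pandharipande). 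There the relative invariant is a triple Hodge integral with explicit answer $\sum_\lambda \chi_\lambda(\nu)/\mf{z}(\nu)\, q^{\kappa(\lambda)f/2}s_\lambda(q^\rho)$, evaluated at framing zero.

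For an independent route to rationality and to the three-leg formula, one can alternatively invoke \cite[Thm.~1 \& 3]{MOOP11:GWDTtoric}. There the capped GW vertex equals the capped DT vertex, and the latter equals the topological vertex by Okounkov--Reshetikhin--Vafa. I would use this as a consistency check, or in place of the LLLZ reference.

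The main obstacle is the bookkeeping of normalisations: the powers of $\ri$ and $\epsilon_{i+1}$, the signs, and the choice of which of $\pm\epsilon_{i+1}$ attaches to $D_i$ under the cyclic order. Mistakes there would not affect the structure of the argument but would corrupt the later edge and gluing computations. I would fix them by testing the degree-one, one-leg case, where the relative invariant is $1/(\epsilon\cdot)$ times an explicit Hodge integral, and by checking cyclic symmetry.
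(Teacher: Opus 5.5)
Citing the threefold topological vertex and then matching normalisations is the right kind of argument, and it is what the paper does. However, the reference your main line rests on does not cover the statement as posed. The Li--Liu--Liu--Zhou theory \cite{LLLZ09:MathTopVert} identifies the Gromov--Witten relative vertex with the combinatorial vertex $\cW$ only when at least one of $\nu_1,\nu_2,\nu_3$ is empty. That identification rests on the two-partition Hodge integral formula of \cite{LLZ07:2PartnHodgeInt}, and on the Mari\~{n}o--Vafa formula \cite{LLZ03:MarinoVafaFormula,OP04:unknot} for a single leg. The case of three non-empty partitions is not proven there. So your item (i), as attributed, fails exactly in the genuinely three-legged case, and your one-leg test against Mari\~{n}o--Vafa can neither detect nor repair this. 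The gap is closed by the route you list only as a consistency check: the relative GW/DT correspondence for the capped vertex of toric threefolds \cite{MOOP11:GWDTtoric}, combined with the Okounkov--Reshetikhin--Vafa evaluation of the three-leg DT vertex \cite[Eq.~(3.23)]{ORV06:QuCYandCrystals}. This is precisely the argument the paper gives, with the one- and two-leg results cited only as earlier special cases. You should make it the main step rather than an alternative.

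A smaller inaccuracy: the LLLZ vertex is not framing-independent. Under a change of framing it changes by explicit factors of the form $\pm q^{n\kappa(\lambda)/2}$ in the Schur basis. The stated formula has no framing factor because of the specific geometry of $U$. Each compactified axis has normal bundle $\cO_{\bP^1}\oplus\cO_{\bP^1}(-1)$, so for a Calabi--Yau torus the weights on $D_i$ are $\pm\epsilon_{i+1}$, and this choice fixes the framing to the one in which no $q^{\kappa/2}$ factor appears. Your account of the prefactor $(-1)^{|\nu_i|}(\ri\epsilon_{i+1})^{-\ell(\nu_i)}$ and of $q=\re^{\ri u}$ is heuristic. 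It is, however, at the same level of detail as the paper, which also imports these normalisations from the cited sources.
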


This formula was first proven in the case where two partitions are empty \cite{LLZ03:MarinoVafaFormula,OP04:unknot} and later for one partition being empty \cite{LLZ07:2PartnHodgeInt}. The general formula is a consequence of the relative Gromov--Witten/Donaldson--Thomas correspondence for toric threefolds \cite{MOOP11:GWDTtoric} together with the formula for the three-leg Donaldson--Thomas vertex \cite[Eq.~(3.23)]{ORV06:QuCYandCrystals}.

By our assumption that the $\gpT$-action on $Z$ is \locsd\ the above threefold formula will allow us to evaluate our fivefold vertex weights. Indeed, the partial compactification at a vertex $v$ is the product $\widetilde{U} = U \times \Aaff[2]$. A direct comparison of the perfect obstruction theories shows that the relative genus-$g$ invariants of $\widetilde{U}$ differ from those of $U$ by an insertion of $\HodgeLambda[g]{\epsilon_v} \HodgeLambda[g]{-\epsilon_v}$ where
\begin{equation*}
	\HodgeLambda[g]{\epsilon} = \sum_{k\geq 0} \lambda_k (-1)^k \epsilon^{g-k-1}
\end{equation*}
and $\lambda_k$ is the $k$\textsuperscript{th} Chern class of the Hodge bundle. Hence, we get the following corollary as an immediate consequence of \Cref{thm: vertex formula} and Mumford's relation \cite[Sec. 5]{Mu83}
\begin{equation}
	\label{eq: Mumfords relation}
	\HodgeLambda[g]{\epsilon} \, \HodgeLambda[g]{-\epsilon} = (-1)^{g-1} \epsilon^{2g-2}\,.
\end{equation}

\begin{cor}
	\label{cor: vertex weight formula}
	Let $\boldsymbol{\nu}$ be a capped marking. Then
	\begin{flalign*}
		&& \widetilde{C}(v,\boldsymbol{\nu}) = \left(\prod_{i=1}^3 (-1)^{|\nu_i|}(\epsilon_{\sigma(h_i)} \epsilon_v)^{-\ell(\nu_{h_i})}\right) \sum_{\lambda_1,\lambda_2,\lambda_3}  \cW_{\lambda_1,\lambda_2,\lambda_3}(\re^{u \epsilon_v}) ~ \prod_{i=1}^3 \frac{\chi_{\lambda_i}(\nu_i)}{\mf{z}(\nu_i)} \,. && \qed
	\end{flalign*}
\end{cor}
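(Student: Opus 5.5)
The plan is to compare obstruction theories and then invoke \Cref{thm: vertex formula} at each genus. The partial compactification at $v$ is $\widetilde{U} = U\times\Aaff[2]$, with relative divisors $\widetilde{D}_{h_i} = D_i\times\Aaff[2]$, and $\gpT$ acts on $\Aaff[2]$ with weights $\epsilon_v,-\epsilon_v$. A relative stable map to $(\widetilde{U}\vertspace \widetilde{D}_{h_1}+\widetilde{D}_{h_2}+\widetilde{D}_{h_3})$ is a relative map to $(U\vertspace D_1+D_2+D_3)$ together with a map to $\Aaff[2]$.

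\emph{Step 1: reduce to the moduli space of $U$.} Since only the $\gpT$-equivariant residue matters, we may localise with respect to the $\Aaff[2]$-factor. The fixed locus is the moduli space of relative maps to $U$, embedded via the origin of $\Aaff[2]$. Its virtual normal bundle is $R\pi_*f^*(\cO\otimes\epsilon_v\oplus\cO\otimes(-\epsilon_v))$. For a connected genus-$g$ domain, $R^0\pi_*\cO=\cO$ and $R^1\pi_*\cO=\mathbb{E}^\vee$. The inverse Euler class of the moving part is therefore
\[
\frac{e(\mathbb{E}^\vee\otimes\epsilon_v)\,e(\mathbb{E}^\vee\otimes(-\epsilon_v))}{\epsilon_v\cdot(-\epsilon_v)}=\HodgeLambda[g]{\epsilon_v}\,\HodgeLambda[g]{-\epsilon_v}
\]
up to the sign convention built into $\HodgeLambda[g]{\cdot}$. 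For disconnected domains this factor is multiplicative over the connected components. The relative conditions are unaffected, because the divisors are products with $\Aaff[2]$.

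\emph{Step 2: apply Mumford's relation.} By \eqref{eq: Mumfords relation}, a connected component of genus $g_j$ contributes the constant factor $(-1)^{g_j-1}\epsilon_v^{2g_j-2}$. For a disconnected curve of total arithmetic genus $g$ these factors multiply to $(-1)^{g-1}\epsilon_v^{2g-2}=(\ri\epsilon_v)^{2g-2}$, using $\sum_j(2g_j-2)=2g-2$. Hence the genus-$g$ disconnected relative invariant of $\widetilde U$ equals $(\ri\epsilon_v)^{2g-2}$ times the corresponding invariant of $U$.

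\emph{Step 3: substitute and match normalisations.} Set $u=\ri\epsilon_v$ in \Cref{thm: vertex formula}. The left side of \Cref{thm: vertex formula} then becomes
\[
(\ri\epsilon_v)^{\sum_i\ell(\nu_i)}\,\widetilde{C}(v,\boldsymbol{\nu}).
\]
The right side is evaluated at $\re^{\ri u}=\re^{-\epsilon_v}$. Solving for $\widetilde{C}$ gives the prefactor
\[
\prod_i(-1)^{|\nu_i|}\,(\ri\epsilon_{\sigma(h_i)})^{-\ell(\nu_i)}\,(\ri\epsilon_v)^{-\ell(\nu_i)}=\prod_i(-1)^{|\nu_i|}\,(-1)^{\ell(\nu_i)}\,(\epsilon_{\sigma(h_i)}\epsilon_v)^{-\ell(\nu_i)}.
\]
Here the labelling is such that $\epsilon_{i+1}$ corresponds to $\epsilon_{\sigma(h_i)}$, matching the tangent weights at $D_i$.

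\emph{Main obstacle.} The hard part is matching the residual signs and the orientation $q$ versus $q^{-1}$ exactly. That is, one must show that the factor $(-1)^{\ell(\nu_i)}$ and the argument $\re^{-\epsilon_v}$ combine to the stated formula with $\re^{u\epsilon_v}$. The approach I would try first is the symmetry $\epsilon_v\mapsto-\epsilon_v$, since the $\Aaff[2]$-contribution is even in $\epsilon_v$. Combined with the identity $\cW_{\lambda^\transpose}(q^{-1})\sim(-1)^{|\lambda|}\cW_\lambda(q)$ and $\chi_{\lambda^\transpose}(\nu)=(-1)^{|\nu|-\ell(\nu)}\chi_\lambda(\nu)$, reindexing $\lambda_i\mapsto\lambda_i^\transpose$ should absorb these signs and invert $q$, provided one tracks carefully the convention for $u$ in the statement.
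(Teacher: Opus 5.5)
Your proposal is correct and follows the paper's route: compare obstruction theories to get the insertion $\HodgeLambda[g]{\epsilon_v}\HodgeLambda[g]{-\epsilon_v}$, apply Mumford's relation \eqref{eq: Mumfords relation}, then use \Cref{thm: vertex formula}; your Step 1 reproduces the paper's $\HodgeLambda[g]{\cdot}$ exactly, with no sign ambiguity. The sign issue you flag is not a real obstacle: only $(\pm\ri\epsilon_v)^{2g-2}$ enters, so you may substitute $u=-\ri\epsilon_v$ instead, which gives $\re^{\ri u}=\re^{\epsilon_v}$ and $(-\ri\epsilon_v)^{-\ell(\nu_i)}(\ri\epsilon_{\sigma(h_i)})^{-\ell(\nu_i)}=(\epsilon_{\sigma(h_i)}\epsilon_v)^{-\ell(\nu_i)}$ directly (reading the stray $u$ in $\re^{u\epsilon_v}$ as $1$), and your transposition argument also works, since $|\lambda_i|=|\nu_i|$ gives $(-1)^{|\lambda_i|+|\nu_i|-\ell(\nu_i)}=(-1)^{\ell(\nu_i)}$, which cancels the extra sign.
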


\begin{rmk}
	\label{rmk: loc sd assumption}
	We stress that in order to deduce the corollary it was essential to assume that the torus action on $Z$ is \locsd. Otherwise we could not have used Mumford's relation to reduce the fivefold invariant to a threefold one. In order to establish a vertex formalism which applies beyond the \locsd\ situation a better understanding of quintuple Hodge integrals will be crucial. See \cite{GPS26:5Hodge} for a first step in this direction.
\end{rmk}

\begin{rmk}
	\label{rmk: support 2}
	To be able to evaluate the vertex terms it is also crucial to assume that the skeletal degree is supported away from \sd\ strata. It is unclear to the author whether there is an appropriate substitute for the partial compactification $\widetilde{U}$ we chose at each vertex which lets one drop this assumption. On the level of bare vertices (that is Hodge integrals) dropping the requirement that $d_e=0$ for edges ending at \sd\ half-edges would require closed formulae for the 3-legged threefold vertex with descendants.
\end{rmk}

\subsubsection{Edge weights}
Let $e=(h,h')$ be an edge with none of its half-edges being an \sd\ half-edge. We will evaluate the edge weight $\widetilde{E}(e,\boldsymbol{\nu})$ via virtual localisation. Recall from \Cref{sec: capped edges} that we weight $e$ by the relative Gromov--Witten invariant of a rank-four vector bundle $V$ on $\bP^1$ relative to the fibres at $0$ and $\infty$ which we identified with the divisors $\widetilde{D}_h$ and $\widetilde{D}_{h'}$ respectively. Further, recall that the tangent $\gpT$-weights at the origin of these fibres are
\begin{equation*}
	(\epsilon_{\sigma(h)},-\epsilon_{\sigma(h)},\epsilon_v,-\epsilon_v)\,, \qquad (\epsilon_{\sigma(h')},-\epsilon_{\sigma(h')},\epsilon_{v'},-\epsilon_{v}')\,.
\end{equation*}

Evaluating the relative Gromov--Witten invariants of $(V\vertspace \widetilde{D}_h + \widetilde{D}_{h'})$ via virtual localisation \cite{GP97:virtloc,GV05:RelVirtLoc} yields a decomposition of the edge weight into
\begin{equation}
	\label{eq: edge weight localised}
	\widetilde{E}(e,\boldsymbol{\nu}) = \sum_{\lambda \vdash d_e} A({\nu_h,\lambda}) \cdot B({\lambda}) \cdot  A^{\prime}({\lambda,\nu_{h'}}) \cdot ({\textstyle\prod_i} \lambda_i) \,\mf{z}(\lambda) \, (\epsilon_{\sigma(h)} \epsilon_v \epsilon_{\sigma(h')} \epsilon_{v'})^{2\ell(\lambda)}
\end{equation}
where the middle factor
\begin{equation*}
	B({\lambda}) =\frac{1}{\prod_i \lambda_i} \prod_{i=1}^{\ell(\lambda)} \left(\frac{\lambda_i}{\epsilon_h}\right)^4 \frac{ \Gamma\!\left(\frac{\lambda_i \epsilon_{\sigma_{v'}(h')}}{\epsilon_h}\right) \, \Gamma\!\left(-\frac{\lambda_i \epsilon_{\sigma_{v'}(h')}}{\epsilon_h}\right) \, \Gamma\!\left(\frac{\lambda_i \epsilon_{v'}}{\epsilon_h}\right) \, \Gamma\!\left(-\frac{\lambda_i \epsilon_{v'}}{\epsilon_h}\right) }{ \Gamma\!\left(\frac{\lambda_i \epsilon_{\sigma_{v}(h)}}{\epsilon_h} + 1\right) \, \Gamma\!\left(-\frac{\lambda_i \epsilon_{\sigma_{v}(h)}}{\epsilon_h} + 1\right) \, \Gamma\!\left(\frac{\lambda_i \epsilon_{v}}{\epsilon_h} + 1\right) \, \Gamma\!\left(-\frac{\lambda_i \epsilon_{v}}{\epsilon_h} + 1\right) } 
\end{equation*}
comes from components of a torus fixed stable map which are degree $\lambda_i$ covers of the zero section $\bP^1 \subset V$ fully ramified over $0$ and $\infty$. The $A$-factors are rubber integrals arising from domain components mapping into bubbles at $0$ and $\infty$. More precisely, we have
\begin{equation*}
	A({\nu_h,\lambda}) = \sum_{g} %u^{2g-2+\ell(\nu_h) + \ell(\lambda)} 
	~\int_{[\Mbar{}_{g}^\bullet(\bP^1 , \nu_{h}, \lambda)^\sim]^{\vir}} \frac{\HodgeLambda[g]{\epsilon_{\sigma(h)}} \, \HodgeLambda[g]{-\epsilon_{\sigma(h)}} \, \HodgeLambda[g]{\epsilon_{v}} \, \HodgeLambda[g]{-\epsilon_{v}}}{\epsilon_{h}-\psi_\infty}
\end{equation*}
with a similar formula for $A^{\prime}$. The last factor arises from gluing the domains responsible for the $A$,$B$ and $A'$ terms along the nodes over $0$ and $\infty$. We chose to present the middle term in a rather non-standard way to facilitate the evaluation of this factor.

\begin{lem}
	\label{lem: edge weight B}
	We have
	\begin{equation*}
		B({\lambda}) = (-1)^{p_e \ell(\lambda)+ f_e |\lambda|} \, (\epsilon_{\sigma_v (h)} \epsilon_v \, \epsilon_{\sigma_{v'} (h')} \epsilon_{v'})^{-\ell(\lambda)} \frac{1}{\prod_i \lambda_i}\,.
	\end{equation*}
\end{lem}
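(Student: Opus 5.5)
The plan is to evaluate $B(\lambda)$ one factor $i$ at a time, by pairing each Gamma function in the numerator with one in the denominator and applying the reflection formula $\Gamma(x)\Gamma(-x) = -\pi/(x\sin \pi x)$ together with $\Gamma(1+x)\Gamma(1-x) = \pi x/\sin\pi x$. Write $x = \lambda_i\epsilon_{\sigma_{v'}(h')}/\epsilon_h$, $y = \lambda_i\epsilon_{v'}/\epsilon_h$ for the numerator arguments and $x' = \lambda_i\epsilon_{\sigma_v(h)}/\epsilon_h$, $y'=\lambda_i\epsilon_v/\epsilon_h$ for the denominator arguments. Then the $i$-th factor of $B(\lambda)$, apart from the prefactor $(\lambda_i/\epsilon_h)^4$, equals
\begin{equation*}
	\frac{1}{x\,y\,x'\,y'}\cdot\frac{\sin(\pi x')\,\sin(\pi y')}{\sin(\pi x)\,\sin(\pi y)}\,,
\end{equation*}
since the two minus signs from the numerator cancel. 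The rational part gives $\epsilon_h^4/(\lambda_i^4\,\epsilon_{\sigma(h')}\epsilon_{v'}\epsilon_{\sigma(h)}\epsilon_v)$. Combined with $(\lambda_i/\epsilon_h)^4$, it yields exactly $(\epsilon_{\sigma_v(h)}\epsilon_v\epsilon_{\sigma_{v'}(h')}\epsilon_{v'})^{-1}$ per part. Together with the explicit $1/\prod_i\lambda_i$, this accounts for everything in the claimed formula except the sign.

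The remaining task is the ratio of sines, which is where the linear relations \eqref{eq: weight reln 1} or \eqref{eq: weight reln 2} enter. I would treat case \eqref{eq: weight reln 1} first. There $x' = (-1)^{p_1}x + f_1\lambda_i$ and $y' = (-1)^{p_2}y + f_2\lambda_i$. Since $\lambda_i f_j\in\bZ$, we have $\sin(\pi((-1)^{p}x + n)) = (-1)^{n+p}\sin(\pi x)$. The ratio is therefore $(-1)^{\lambda_i f_1 + p_1 + \lambda_i f_2 + p_2} = (-1)^{p_e + f_e\lambda_i}$. In case \eqref{eq: weight reln 2} the same computation goes through with the roles of $x$ and $y$ swapped, giving the identical sign. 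Taking the product over $i$ produces $(-1)^{p_e\ell(\lambda) + f_e|\lambda|}$, as claimed.

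The main obstacle is bookkeeping rather than mathematics. I would double-check that the pairing of numerator and denominator arguments is consistent with the relations: specifically, that $\epsilon_{\sigma_v(h)}$ and $\epsilon_v$ relate to $\epsilon_{\sigma_{v'}(h')}$ and $\epsilon_{v'}$ with the orientation of $\epsilon_h$ used in $B$. I would also confirm that the identities hold as formal identities. The arguments are ratios of weights rather than numbers, so the Gamma-function manipulation should be understood via the standard interpretation of such products in localisation, equivalently via the identity $\prod_{k=1}^{n-1}(a - k\epsilon)$-type finite products for the actual weight factors. There the sine ratio becomes a ratio of finite products that agree up to the stated sign. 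If this formal interpretation is worrying, I would rewrite $B$ in its original finite-product form, in which the integrality of $f_j\lambda_i$ makes the cancellation manifest, and redo the sign count there.
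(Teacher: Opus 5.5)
Your proposal is correct and is essentially the paper's argument: the reflection formula rewrites $B(\lambda)$ as $(\epsilon_{\sigma_v(h)}\epsilon_v\,\epsilon_{\sigma_{v'}(h')}\epsilon_{v'})^{-\ell(\lambda)}\frac{1}{\prod_i\lambda_i}$ times a product of sine ratios. The linear relations \eqref{eq: weight reln 1} or \eqref{eq: weight reln 2} then collapse each ratio to $(-1)^{p_e+f_e\lambda_i}$. Your explicit treatment of both cases, including the cancellation of the two minus signs from $\Gamma(x)\Gamma(-x)=-\pi/(x\sin\pi x)$, is exactly the case-by-case check the paper leaves to the reader, and your bookkeeping is right.
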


\begin{proof}
	By the reflection formula of the Gamma function $\Gamma(z)\Gamma(1-z) =  \frac{\pi}{\sin \pi z}$ we can write $B({\lambda})$ as
	\begin{equation*}
		B({\lambda}) = (\epsilon_{\sigma_v (h)} \epsilon_v \, \epsilon_{\sigma_{v'} (h')} \epsilon_{v'})^{-\ell(\lambda)} \frac{1}{\prod_i \lambda_i} \prod_{i=1}^{\ell(\lambda)} \frac{\sin \frac{\pi \lambda_i \epsilon_{\sigma_v (h)}}{\epsilon_h} \, \sin \frac{\pi \lambda_i \epsilon_{v}}{\epsilon_h} }{\sin \frac{\pi \lambda_i \epsilon_{\sigma_{v'} (h')}}{\epsilon_h} \, \sin \frac{\pi \lambda_i \epsilon_{v'}}{\epsilon_h} } \,.
	\end{equation*}
	Now due to the linear relations \eqref{eq: weight reln 1} or \eqref{eq: weight reln 2} satisfied by the weights $\epsilon_h$, $\epsilon_{\sigma_v (h)}$, $\epsilon_v$, $\epsilon_{\sigma_{v'} (h')}$ and $\epsilon_{v'}$ the last product simplifies to a factor $\pm 1$. One can check in a case-by-case analysis that by our definition of the \frmxp\ we have
	\begin{equation*}
		\frac{\sin \frac{\pi \lambda_i \epsilon_{\sigma_v (h)}}{\epsilon_h} \, \sin \frac{\pi \lambda_i \epsilon_{v}}{\epsilon_h} }{\sin \frac{\pi \lambda_i \epsilon_{\sigma_{v'} (h')}}{\epsilon_h} \, \sin \frac{\pi \lambda_i \epsilon_{v'}}{\epsilon_h} } = (-1)^{p_e + f_e \lambda_i}\,. \qedhere
	\end{equation*}
\end{proof}

The $A$-terms are evaluated as follows.

\begin{lem}
	\label{lem: edge weight A}
	We have
	\begin{equation*}
		A({\nu_h,\lambda}) = (\epsilon_{\sigma_v (h)} \epsilon_v)^{-\ell(\nu_h)-\ell(\lambda)} \sum_{\mu} \exp \left( \frac{\kappa(\mu)}{2} \frac{\epsilon_{\sigma_v (h)} \epsilon_v}{\epsilon_h} \right) ~ \frac{\chi_\mu (\nu_h)}{\mf{z}(\nu_h)} \frac{\chi_\mu (\lambda)}{\mf{z}(\lambda)} 
	\end{equation*}
\end{lem}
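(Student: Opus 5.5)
The plan is to reduce the fivefold rubber integral $A(\nu_h,\lambda)$ to a threefold one via Mumford's relation, and then to invoke the known evaluation of the threefold rubber integral, i.e.\ the framed one-leg (Mariño--Vafa type) edge term appearing in \cite{LLLZ09:MathTopVert}.

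\emph{Step 1: Mumford's relation.} The integrand contains $\HodgeLambda[g]{\epsilon_v}\HodgeLambda[g]{-\epsilon_v}$. By \eqref{eq: Mumfords relation} this product equals $(-1)^{g-1}\epsilon_v^{2g-2}$. Thus
\begin{equation*}
A(\nu_h,\lambda)=\sum_g (-1)^{g-1}\epsilon_v^{2g-2}\int_{[\Mbar{}_{g}^\bullet(\bP^1,\nu_h,\lambda)^\sim]^{\vir}} \frac{\HodgeLambda[g]{\epsilon_{\sigma(h)}}\HodgeLambda[g]{-\epsilon_{\sigma(h)}}}{\epsilon_h-\psi_\infty}.
\end{equation*}
Applying Mumford's relation a second time to the remaining pair removes all Hodge classes, leaving $\epsilon_{\sigma(h)}^{2g-2}$. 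The integral becomes
\begin{equation*}
\int (\epsilon_h-\psi_\infty)^{-1}.
\end{equation*}
By dimension count, only $\psi_\infty^{2g-3+\ell(\nu_h)+\ell(\lambda)}$ contributes. The result is
\begin{equation*}
\epsilon_h^{-(2g-2+\ell(\nu_h)+\ell(\lambda))}\int\psi_\infty^{2g-3+\ell(\nu_h)+\ell(\lambda)}.
\end{equation*}
Collecting powers, the total weight is $(\epsilon_{\sigma(h)}\epsilon_v)^{2g-2}\epsilon_h^{-(2g-2+\ell(\nu_h)+\ell(\lambda))}$ times a sign $(-1)^{2g-2}=1$. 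Setting $u=\epsilon_{\sigma(h)}\epsilon_v/\epsilon_h$, the generating series takes the form
\begin{equation*}
(\epsilon_{\sigma(h)}\epsilon_v)^{-\ell(\nu_h)-\ell(\lambda)}\sum_g u^{2g-2+\ell(\nu_h)+\ell(\lambda)}\int_{[\Mbar{}_{g}^\bullet(\bP^1,\nu_h,\lambda)^\sim]}\psi_\infty^{2g-3+\ell(\nu_h)+\ell(\lambda)}.
\end{equation*}
The prefactor is exactly the one in the claim.

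\emph{Step 2: rubber $\psi$-integrals via double Hurwitz numbers.} The disconnected rubber integrals of top powers of the target $\psi$ class are given by (disconnected) double Hurwitz numbers, divided by the number $r!$ of branch points with $r=2g-2+\ell(\nu_h)+\ell(\lambda)$ (cf.\ \cite{OP04:unknot}, \cite{LLZ03:MarinoVafaFormula}). Combined with the cut-and-join/Burnside formula, double Hurwitz numbers are expressed through characters, giving
\begin{equation*}
\sum_r \frac{u^r}{r!}H^\bullet_r(\nu_h,\lambda)=\sum_\mu e^{u\kappa(\mu)/2}\frac{\chi_\mu(\nu_h)\chi_\mu(\lambda)}{\mf{z}(\nu_h)\mf{z}(\lambda)}.
\end{equation*}
Here $f_2(\mu)=\kappa(\mu)/2$ is the central character of the transposition class. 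Substituting $u$ then yields the lemma.

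\emph{Main obstacle.} The hardest part is the bookkeeping of normalisations: matching the convention for disconnected rubber (including unstable degree-$d$ trivial covers, which contribute in genus where $r=0$), the automorphism factors $\mf{z}$, and the sign conventions. I would fix these by comparing with the threefold case treated in \cite[Sec.~7]{LLLZ09:MathTopVert}, where the same rubber integral appears with a single Hodge pair, and checking the identity in low degree, e.g.\ $\nu_h=\lambda=(1)$.
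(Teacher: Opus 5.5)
Your proposal is correct and follows the paper's proof. You apply Mumford's relation \eqref{eq: Mumfords relation} to both Hodge pairs, reducing the integrand to $(\epsilon_{\sigma_v(h)}\epsilon_v)^{2g-2}/(\epsilon_h-\psi_\infty)$, and then evaluate the rubber integral via double Hurwitz numbers; the paper cites that evaluation from \cite{LLZ07:2PartnHodgeInt}, whereas you rederive it from the top-$\psi$ dimension count and the Frobenius character formula with $f_2(\mu)=\kappa(\mu)/2$.
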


\begin{proof}
	The claim follows from Mumford's relation \eqref{eq: Mumfords relation} together with the rubber integral formula
	\begin{equation*}
	\begin{split}
		\sum_g u^{2g-2+\ell(\nu_1)+\ell(\nu_2)} & \int_{[\Mbar{}_{g}^\bullet(\bP^1 , \nu_1, \nu_2)^\sim]^{\vir}} \frac{1}{\epsilon - \psi_{\infty}} \\
		& \qquad = \sum_{\mu} \exp \left( \frac{\kappa(\mu)}{2} \frac{u}{\epsilon} \right) ~ \frac{\chi_\mu (\nu_1)}{\mf{z}(\nu_1)} \frac{\chi_\mu (\nu_2)}{\mf{z}(\nu_2)}\,.
	\end{split}
	\end{equation*}
	This equation is proven in \cite[Prop.~5.4 \& Eq.~(17)]{LLZ07:2PartnHodgeInt} by relating the rubber integral to double Hurwitz numbers.
\end{proof}

If we insert the formulae from \Cref{lem: edge weight B} and \labelcref{lem: edge weight A} into equation \eqref{eq: edge weight localised} and employ the orthogonality relations
\begin{align}
	\sum_\lambda \frac{\chi_{\mu_1} (\lambda) \, \chi_{\mu_2} (\lambda)}{\mf{z}(\lambda)}  & = \delta_{\mu_1,\mu_2}\,, \label{eq: chi orthogonality relation} \\
	\sum_\lambda (-1)^{\ell(\lambda)} \frac{\chi_{\mu_1} (\lambda) \, \chi_{\mu_2} (\lambda)}{\mf{z}(\lambda)} & =  \delta_{\mu_1,\mu_2^{\transpose}} (-1)^{|\mu_1|} \,. \nonumber \qedhere
\end{align}
we arrive at the following expression for the edge weights.

\begin{cor}
	\label{lem: edge weight formula}
	Let $\boldsymbol{\nu}$ be a capped marking. Then
	\begin{equation*}
		\begin{split}
			\widetilde{E}(e,\boldsymbol{\nu}) & = (-1)^{(f_e+p_e)|\nu_h|} (\epsilon_{\sigma_v (h)} \epsilon_v)^{-\ell(\nu_h)} (\epsilon_{\sigma_{v'} (h')} \epsilon_{v'})^{-\ell(\nu_{h'})} \\[0.5em]
			& \qquad \times \sum_{\mu_h,\mu_{h'}} \exp \left( \frac{\kappa(\mu_h)}{2} \frac{ \epsilon_{\sigma_v (h)} \epsilon_v + (-1)^{p_e+1}\epsilon_{\sigma_{v'} (h')} \epsilon_{v'}}{\epsilon_h} \right) ~ \frac{\chi_{\mu_h} (\nu_h)}{\mf{z}(\nu_h)} \frac{\chi_{\mu_{h'}} (\nu_{h'})}{\mf{z}(\nu_{h'})}
		\end{split}
	\end{equation*}
	where the sum is over tuples of partitions $(\mu_h,\mu_{h'})$ satisfying $\mu_h = \mu_{h'}$ if the \mxp\ of $e$ is even and $\mu_h = \mu_{h'}^{\transpose}$ if it is odd. \hfill \qed
\end{cor}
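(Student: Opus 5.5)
The plan is to substitute \Cref{lem: edge weight B} and \Cref{lem: edge weight A}, together with the analogous formula for $A'(\lambda,\nu_{h'})$, into the localised decomposition \eqref{eq: edge weight localised}. Then I would collapse the sum over $\lambda\vdash d_e$ using the character orthogonality relations \eqref{eq: chi orthogonality relation}.

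First I bookkeep the powers of the weights. \Cref{lem: edge weight A} contributes $(\epsilon_{\sigma_v(h)}\epsilon_v)^{-\ell(\nu_h)-\ell(\lambda)}$. The analogous formula for $A'$ contributes $(\epsilon_{\sigma_{v'}(h')}\epsilon_{v'})^{-\ell(\lambda)-\ell(\nu_{h'})}$. \Cref{lem: edge weight B} contributes $(\epsilon_{\sigma_v(h)}\epsilon_v\epsilon_{\sigma_{v'}(h')}\epsilon_{v'})^{-\ell(\lambda)}$. Against these stands the gluing factor $(\epsilon_{\sigma(h)}\epsilon_v\epsilon_{\sigma(h')}\epsilon_{v'})^{2\ell(\lambda)}$. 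All $\ell(\lambda)$-dependent powers cancel, leaving exactly $(\epsilon_{\sigma_v(h)}\epsilon_v)^{-\ell(\nu_h)}(\epsilon_{\sigma_{v'}(h')}\epsilon_{v'})^{-\ell(\nu_{h'})}$. Similarly, the factor $1/\prod_i\lambda_i$ in $B$ cancels the $\prod_i\lambda_i$ of the gluing term. One $\mf{z}(\lambda)$ survives from the gluing term and one $1/\mf{z}(\lambda)$ from each of $A$ and $A'$, so the net factor is $1/\mf{z}(\lambda)$.

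What remains is
\begin{equation*}
	(-1)^{f_e d_e}\sum_{\mu,\mu'} \re^{\frac{\kappa(\mu)}{2}\frac{\epsilon_{\sigma(h)}\epsilon_v}{\epsilon_h}}\,\re^{\frac{\kappa(\mu')}{2}\frac{\epsilon_{\sigma(h')}\epsilon_{v'}}{\epsilon_{h'}}}\,\frac{\chi_\mu(\nu_h)\chi_{\mu'}(\nu_{h'})}{\mf{z}(\nu_h)\mf{z}(\nu_{h'})}\sum_{\lambda}(-1)^{p_e\ell(\lambda)}\frac{\chi_\mu(\lambda)\chi_{\mu'}(\lambda)}{\mf{z}(\lambda)}.
\end{equation*}
Here the $A'$ factor carries $\epsilon_{h'}=-\epsilon_h$ in place of $\epsilon_h$, since the roles of $0$ and $\infty$ are swapped.

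If $p_e$ is even, the first orthogonality relation forces $\mu'=\mu$. The two exponents then combine to $\frac{\kappa(\mu)}{2}\frac{\epsilon_{\sigma(h)}\epsilon_v-\epsilon_{\sigma(h')}\epsilon_{v'}}{\epsilon_h}$, which matches $(-1)^{p_e+1}=-1$, and the sign is $(-1)^{f_e d_e}=(-1)^{(f_e+p_e)|\nu_h|}$.

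If $p_e$ is odd, the second relation forces $\mu'=\mu^{\transpose}$ and produces an extra $(-1)^{|\mu|}=(-1)^{d_e}$, giving total sign $(-1)^{(f_e+p_e)d_e}$. Using $\kappa(\mu^{\transpose})=-\kappa(\mu)$, the exponent becomes $\frac{\kappa(\mu)}{2}\frac{\epsilon_{\sigma(h)}\epsilon_v+\epsilon_{\sigma(h')}\epsilon_{v'}}{\epsilon_h}$, again matching $(-1)^{p_e+1}=+1$.

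The step I expect to need the most care is the orientation bookkeeping for $A'$. Concretely, one must confirm that its rubber integral has $\epsilon_{h'}-\psi$ in the denominator, so the Casimir exponent carries $1/\epsilon_{h'}=-1/\epsilon_h$. One must also check that the Hodge insertions there are the ones of $\widetilde D_{h'}$, with weights $\pm\epsilon_{\sigma(h')},\pm\epsilon_{v'}$, so that Mumford's relation yields exactly the factor $\epsilon_{\sigma(h')}\epsilon_{v'}$. If the conventions put $-\psi_0$ differently, I would compensate by the symmetry $\chi_\mu(\lambda)\chi_{\mu'}(\lambda)$ in the relative conditions and recheck the sign in the case $p_e$ odd.
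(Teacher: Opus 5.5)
Your proposal is correct and follows the paper's route: you substitute \Cref{lem: edge weight B}, \Cref{lem: edge weight A} and its analogue for $A'$ (with $\epsilon_{h'}=-\epsilon_h$) into \eqref{eq: edge weight localised}, cancel the $\ell(\lambda)$-dependent weight powers, $\prod_i\lambda_i$ and one factor of $\mf{z}(\lambda)$, and then collapse the $\lambda$-sum with the two orthogonality relations \eqref{eq: chi orthogonality relation}. The paper leaves the sign and Casimir bookkeeping implicit; your treatment of the even and odd mixing-parity cases, using $\kappa(\mu^{\transpose})=-\kappa(\mu)$ and the extra factor $(-1)^{d_e}$, supplies exactly that check, and it is correct.
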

 
\begin{rmk}
	Both in \Cref{lem: edge weight B} and \labelcref{lem: edge weight A} the assumptions that the $\gpT$-action is Calabi--Yau and \locsd\ are used crucially: First, the assumptions lead to the collapse of the $B$-term and second, they enable us to use Mumford's relation which is key to evaluate the $A$-terms. To go beyond \locsd\ torus actions new ideas will be necessary.
\end{rmk}

\subsubsection{Putting everything together}
If we plug our formulae for the vertex and edge weights (\Cref{cor: vertex weight formula} and \labelcref{lem: edge weight formula}) into the capped localisation formula \eqref{eq: capped localisation} we obtain
\begin{equation*}
\begin{split}
	&\qquad \GWdiscarg{Z}{\gpT}{\boldsymbol{d}} = \\
	&\sum_{\boldsymbol{\nu}\in \cP_{\Gamma,\boldsymbol{d}}} \sum_{\boldsymbol{\lambda}}  \sum_{\boldsymbol{\mu}\in \cP_{\Gamma, \boldsymbol{p}, \boldsymbol{d}}} ~  \prod_{e=(h,h')} (-1)^{(f_e+p_e) |\mu_h|} \exp\left(  \frac{\kappa(\mu_h)}{2} \frac{\epsilon_{\sigma_v (h)} \epsilon_v + (-1)^{p_e+1}\epsilon_{\sigma_{v'} (h')} \epsilon_{v'}}{\epsilon_h} \right) \\ 
	& \hspace{6.6em}\times \prod_v  \cW_{\lambda_{h_1},\lambda_{h_2},\lambda_{h_3}}\!(\re^{u \epsilon_v}) ~ \prod_h \frac{\chi_{\lambda_{h}}(\nu_h) \, \chi_{\mu_h}(\nu_h) }{\mf{z}(\nu_h)}
\end{split}
\end{equation*}
where the second sum runs over tuples of partitions $\boldsymbol{\lambda}=(\lambda_{h_1},\lambda_{h_2},\lambda_{h_3})_{v\in V(\Gamma)}$ assigning a partition to each of the three half-edges permuted by the cyclic order $\sigma_v=(h_1\, h_2\, h_3)$ at a vertex $v$. We can carry out the sum over $\boldsymbol{\nu}$ using the orthogonality relation \eqref{eq: chi orthogonality relation}. The result of this manipulation is precisely the vertex formula stated in \Cref{thm: vertex formalism main part}.

%-------------------------------------------------------------------------------
\appendix
%-------------------------------------------------------------------------------
\section{The plethystic logarithm in localised K-theory}
\label{sec: PLog}

\subsection{Integrality}
Let $\gpT$ be a torus. We consider the representation ring of $\gpT$ with coefficients in $\bZ$ localised at the augmentation ideal, i.e.~the ideal of zero dimensional virtual representations:
\begin{equation*}
	R \coloneqq \Rep(\gpT)_{\mr{loc}} = K_{\gpT}(\mr{pt})_{\mr{loc}}\,.
\end{equation*}
If we fix an isomorphism $\gpT \cong \Gm[m]$ we get the following presentation:
\begin{equation*}
	R \cong \bZ\left[q_1^{\pm 1},\ldots,q_m^{\pm 1},\left\{\frac{1}{\prod_{i=1}^m(1-q_i^{n_i})}\right\}_{\boldsymbol{n}\in\bZ^m\setminus\{0\}}\right]\,.
\end{equation*}
The ring $R$ is a lambda ring. The $k$\textsuperscript{th} Adams operation $\Psi_k$ acts on one-dimensional representations $q$ as $\Psi_k(q)= q^k$. We get a lambda ring structure on $R\llbracket Q_1,\ldots,Q_\ell\rrbracket$ by declaring that it acts on monomials as $\Psi_k(r Q^{\boldsymbol{d}}) \coloneqq \Psi_k(r) Q^{k\boldsymbol{d}}$. Let $I\subset R$ be the ideal generated by $Q_1,\ldots,Q_\ell$. We define the plethystic logarithm as
\begin{equation*}
\begin{tikzcd}
	\Log: &[-3em] 1 + I \cdot R\llbracket Q_1,\ldots,Q_\ell\rrbracket \ar[r] & I \cdot R\llbracket Q_1,\ldots,Q_\ell\rrbracket \otimes \bQ\,, \\[-2em]
	& G \ar[maps to,r] & {\displaystyle\sum_{k>0}}\frac{\mu(k)}{k} \, \Psi_k(\log G)\,.
\end{tikzcd}
\end{equation*}
A priori, by the above definition we should expect the image $\Log(G)$ of a power series with coefficients in $R$ to feature coefficients in $R\otimes \bQ$ --- one source of denominators being the factor $1/k$ and the other being the logarithm. In contrast to this expectation, one can show that the plethystic logarithm preserves integrality.

\begin{lem}
	\label{lem: PLog preserves integrality}
	The image of the plethystic logarithm lies in $I \cdot R\llbracket Q_1,\ldots,Q_\ell\rrbracket$.
\end{lem}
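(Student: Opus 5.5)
The plan is to reduce integrality of $\Log$ to the classical fact that a power series with constant term $1$ and coefficients in a suitable ring factors as an infinite product of the form $\prod (1-m)^{-c}$. The one input we need about $R$ is that it is a \emph{binomial-type} lambda ring in the following weak sense. For every $r\in R$ and $k\ge1$, the Adams operations satisfy the congruences
\begin{equation*}
	\Psi_p(r)\equiv r^p \mod pR
\end{equation*}
for all primes $p$. Moreover, $R$ is torsion free. Both facts hold for $\bZ[q_i^{\pm1}]$, since $\Psi_p$ is the Frobenius lift $q\mapsto q^p$ and $(\sum a_\alpha q^\alpha)^p\equiv\sum a_\alpha q^{p\alpha}$ mod $p$. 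They persist under localisation: if $s=1-q^{\boldsymbol n}$, then $\Psi_p(s)=1-q^{p\boldsymbol n}$ is again inverted in $R$, and $\Psi_p(s^{-1})=\Psi_p(s)^{-1}\equiv s^{-p}$ mod $p$. Thus $(R,\Psi)$ is a torsion-free ring with commuting Frobenius lifts, i.e.~a $\Lambda$-ring by Wilkerson's criterion. This is the Konishi--Peng input I would isolate first.

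Step two applies the standard Dwork-type criterion in the form used by Konishi and Peng. Write $\log G=\sum_{\boldsymbol d} a_{\boldsymbol d}Q^{\boldsymbol d}$, where a priori $a_{\boldsymbol d}\in R\otimes\bQ$. Define $\Log G=\sum_{\boldsymbol d} b_{\boldsymbol d}Q^{\boldsymbol d}$, so that
\begin{equation*}
	a_{\boldsymbol d}=\sum_{k\mid\boldsymbol d}\tfrac1k\Psi_k(b_{\boldsymbol d/k}).
\end{equation*}
I would prove $b_{\boldsymbol d}\in R$ by induction on $\boldsymbol d$, ordered by total degree. The cleanest route is to show that $G$ admits a unique product expansion
\begin{equation*}
	G=\prod_{\boldsymbol d}\prod_{j}(1-r_{\boldsymbol d,j}Q^{\boldsymbol d})^{\mp1}
\end{equation*}
with the correction at each degree lying in $R$. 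Concretely, after dividing out the factors of degree $<\boldsymbol d$, the remaining series is $1+cQ^{\boldsymbol d}+\dots$ with $c\in R$. In the $\lambda$-ring $R$, one can write $\Exp(cQ^{\boldsymbol d})=\sum_n \sigma_n(c)Q^{n\boldsymbol d}$, where the $\sigma_n$ are the symmetric-power operations. These lie in $R$ because $R$ is a genuine $\lambda$-ring, which is exactly what step one supplies. So we peel off $\Exp(cQ^{\boldsymbol d})$ and continue. Then $G=\Exp\bigl(\sum c_{\boldsymbol d}Q^{\boldsymbol d}\bigr)$ with all $c_{\boldsymbol d}\in R$, and by definition $\Log G=\sum c_{\boldsymbol d}Q^{\boldsymbol d}$, proving the lemma. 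Convergence in the $I$-adic topology is automatic, since every degree is handled by finitely many steps.

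The main obstacle is step one: justifying that the localisation $R$ carries a $\lambda$-ring structure extending the Adams operations, with $\sigma_n(r)\in R$ for all $r\in R$, rather than merely the congruences. My first attempt would be Wilkerson's theorem: a torsion-free ring with commuting endomorphisms $\Psi_p$ satisfying $\Psi_p(x)\equiv x^p$ mod $p$ has a unique $\lambda$-structure with these Adams operations. This reduces everything to the mod $p$ congruences checked above, including the inverse step. Torsion-freeness is clear, because $R$ embeds in a field of rational functions. If citing Wilkerson is undesirable, the fallback is the direct Möbius-congruence argument à la Peng. One shows $\sum_{k\mid n}\mu(k)\Psi_k(x^{n/k})\in nR$ by expanding $x$ as a fraction with monic-type denominators and reducing to monomials, where the statement is the classical necklace congruence.
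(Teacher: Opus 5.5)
Your argument is correct, but it takes a genuinely different route from the paper.

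\textbf{The paper's route.} Following Konishi, the paper expands $\Log G$ into an explicit closed formula for each coefficient $\Omega_{\boldsymbol d}$. This is a sum over multiplicities with multinomial coefficients and M\"obius weights. It then proves each summand integral by combining Konishi's congruences for multinomial coefficients with the congruence $G^{p^a}\equiv(\Psi_pG)^{p^{a-1}} \bmod p^a$. That congruence is checked on $\Rep(\gpT)$ by induction on $a$ and extended to the localisation by clearing denominators.

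\textbf{Your route.} You only use the case $a=1$: the $\Psi_p$ are commuting Frobenius lifts on the torsion-free ring $R$, and they send the inverted elements $1-q^{\boldsymbol n}$ to inverted elements. From this you invoke Wilkerson's theorem to get an honest $\lambda$-ring structure on $R$. The identity $\sigma_t(c)=\exp\bigl(\sum_k \Psi_k(c)t^k/k\bigr)$ then shows that $\Exp(cQ^{\boldsymbol d})=\sum_n\sigma_n(c)Q^{n\boldsymbol d}$ has coefficients in $R$. Finally you invert $\Exp$ triangularly, degree by degree.

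\textbf{Comparison.} Your argument is shorter and more conceptual. It also justifies the assertion that $R$ is a $\lambda$-ring, which the paper makes without comment; a localisation of a $\lambda$-ring does not automatically inherit a $\lambda$-structure. The price is that the number-theoretic content is outsourced to Wilkerson's theorem, whose proof is itself a Dwork-type congruence argument of the same flavour. The paper's route is self-contained and elementary, and it gives an explicit formula for $\Omega_{\boldsymbol d}$ in terms of the $G_{\boldsymbol\delta}$.

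\textbf{A caveat on your fallback (not needed for the main argument).} The necklace congruence $\sum_{k\mid n}\mu(k)\Psi_k(x)^{n/k}\in nR$ cannot be obtained by ``reducing to monomials'', because the expression is neither additive nor multiplicative in $x$. It does follow, one prime power $p^a\,\|\, n$ at a time, from the mod $p^a$ congruence the paper proves. Combined with the factorisation $G=\prod_{\boldsymbol d}(1-r_{\boldsymbol d}Q^{\boldsymbol d})^{-1}$ with $r_{\boldsymbol d}\in R$, obtained by the same peeling, it gives a Wilkerson-free version of your proof.
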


\begin{rmk}
	The analogous statement for $\Rep(\gpT)$, that is without localising at the augmentation ideal, holds true since the plethystic exponential acts on a representation $V$ times $Q^{\boldsymbol{d}}$ as $\Exp(V Q^{\boldsymbol{d}}) = \sum_{n\geq 0} \mr{Sym}_n(V) Q^{n\boldsymbol{d}}$ meaning that it preserves integrality. Thus, the same must be true for the plethystic logarithm since the relation $\Exp(\Log G) = G$ allows to solve for $\Log G$ recursively. Hence, the insight of \Cref{lem: PLog preserves integrality} is that the feature of preserving integrality persists after localising at the augmentation ideal.
\end{rmk}

\subsection{The proof of \Cref{lem: PLog preserves integrality}}
To prove \Cref{lem: PLog preserves integrality} we follow Konishi \cite[Sec.~5]{Kon06:Integrality} generalising ideas of Peng \cite{Pen07:ProofGVConj}. Let $G \in 1 + I \cdot R\llbracket Q_1,\ldots,Q_\ell\rrbracket$. We write $G_{\boldsymbol{d}}$ for the coefficients of this power series and $\Omega_{\boldsymbol{d}}$ for the coefficients of its plethystic logarithm:
\begin{equation*}
	\Log\left(1+\sum_{\boldsymbol{d} \neq 0} Q^{\boldsymbol{d}} ~ G_{\boldsymbol{d}}\right) \coloneqq \Log(G) \eqqcolon \sum_{\boldsymbol{d} \neq 0} Q^{\boldsymbol{d}} ~ \Omega_{\boldsymbol{d}} \,.
\end{equation*}
We need to show that $\Omega_{\boldsymbol{d}} \in R \subset R\otimes \bQ$. To do so we first find a formula for these coefficients by expanding the left-hand side of the above equation. Following \cite[Sec.~5.1]{Kon06:Integrality}, we write
\begin{equation*}
	D(\boldsymbol{d}) \coloneqq \big\{~\boldsymbol{\delta}\in\bZ^{\ell}_{\geq 0}\setminus\{0\} ~ \big| ~ \delta_i \leq d_i \text{ for all }i ~\big\}
\end{equation*}
and call an element $\boldsymbol{n} \in \bZ_{\geq 0}^{D(\boldsymbol{d})}$ a multiplicity of $\boldsymbol{d}$ if
\begin{equation*}
	\sum_{\boldsymbol{\delta}\in D(\boldsymbol{d})} n_{\boldsymbol{\delta}}  \boldsymbol{\delta}  = \boldsymbol{d}\,.
\end{equation*}
With this notation we have\footnote{We remark that there is a small typo in \cite{Kon06:Integrality} in the formula in Lemma 5.2 and the one for $G^\Gamma_{\vec{d}}(q)$ stated before the lemma: The denominator $k'|n|$ should not be present.}
\begin{equation}
	\label{eq: formula Omega}
	\Omega_{\boldsymbol{d}} = -\sum_{k \vertspace \gcd(\boldsymbol{d})} \sum_{\boldsymbol{n}} \frac{1}{k|\boldsymbol{n}|} \sum_{k'|k} \mu\left(\frac{k}{k'}\right) \frac{(k'|\boldsymbol{n}|)!}{\prod_{\boldsymbol{\delta}\in D(\boldsymbol{d})} (k'n_{\boldsymbol{\delta}})!}  \left((-1)^{|\boldsymbol{n}|}\prod_{\boldsymbol{\delta}\in D(\boldsymbol{d})} \Psi_{k/k'}\big(G_{\boldsymbol{\delta}}^{n_{\boldsymbol{\delta}}}\big)\right)^{k'}\,.
\end{equation}
where $\mu$ is the M\"{o}bius function and the second sum runs over all multiplicities $\boldsymbol{n}$ of $\boldsymbol{d}/k$ satisfying $\gcd(\boldsymbol{n}) = 1$. To prove \Cref{lem: PLog preserves integrality} it therefore suffices to show the following.

\begin{prop}
	\label{prop: every term is integral}
	For $G\in R$, $\boldsymbol{n} \in \bZ^\ell_{>0}$ with $\gcd(\boldsymbol{n}) = 1$ and $k > 0$ we have
	\begin{equation*}
		\frac{1}{k|\boldsymbol{n}|}  \sum_{k'|k} \mu\left(\frac{k}{k'}\right) \frac{(k'|\boldsymbol{n}|)!}{\prod_{i} (k'n_i)!}  \left(\Psi_{k/k'} G \right)^{k'} \in R\,.
	\end{equation*}
\end{prop}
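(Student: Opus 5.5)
The plan is to argue one prime at a time. First I would check that $R=\bigcap_p R_{(p)}$ inside $R\otimes\bQ$, where $R_{(p)}=R\otimes\bZ_{(p)}$. Recall that $R=S^{-1}\bZ[q_1^{\pm1},\ldots,q_m^{\pm1}]$, where $S$ is generated by the elements $1-q^{\boldsymbol{n}}$.
\begin{itemize}
	\item The Laurent polynomial ring is a UFD.
	\item Each generator $1-q^{\boldsymbol{n}}$ has content one.
\end{itemize}
Gauss's lemma then shows that an element $f/(Ns)$ with $f\in\bZ[q^{\pm1}]$, $s\in S$ and $N\in\bZ_{>0}$ lies in $R$ as soon as it lies in every $R_{(p)}$. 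So it suffices to fix a prime $p$, write $k=p^a m$ with $p\nmid m$, and show that the sum is divisible by $p^{a+v_p(|\boldsymbol{n}|)}$ in $R_{(p)}$.

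Next I would reorganise the Möbius sum. Since $\mu(k/k')$ vanishes unless $k/k'$ is squarefree, the divisors $k'$ with nonzero contribution pair up according to their prime-to-$p$ part $m'\mid m$.
\begin{itemize}
	\item If $a=0$, each such $k'$ equals $m'$. Here it suffices that every multinomial coefficient $\binom{k'|\boldsymbol{n}|}{k'\boldsymbol{n}}$ is divisible by $k'|\boldsymbol{n}|/\gcd(k'\boldsymbol{n})=|\boldsymbol{n}|$, using $\gcd(\boldsymbol{n})=1$. This is the classical fact that a multinomial $\binom{N}{\boldsymbol{c}}$ is divisible by $N/\gcd(\boldsymbol{c})$.
	\item If $a\ge1$, the contributing $k'$ come in pairs $k'=p^a m'$ and $k'=p^{a-1}m'$, with Möbius signs $\mu(m/m')$ and $-\mu(m/m')$ respectively. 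Set $X=(\Psi_{m/m'}G)^{m'}$, $Y=\Psi_p X$, and let $A_b=\binom{p^b m'|\boldsymbol{n}|}{p^b m'\boldsymbol{n}}$. Each pair contributes $\pm\big(A_aX^{p^a}-A_{a-1}Y^{p^{a-1}}\big)$.
\end{itemize}
In the second case I split this as
\[
A_a\big(X^{p^a}-Y^{p^{a-1}}\big)+(A_a-A_{a-1})\,Y^{p^{a-1}} .
\]

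For the first summand I need two ingredients.
\begin{itemize}
	\item $A_a$ is divisible by $p^{v_p(|\boldsymbol{n}|)}$, by the same multinomial divisibility as in the case $a=0$.
	\item $X^{p^a}\equiv Y^{p^{a-1}}$ modulo $p^a$. For this I use that $\Psi_p$ is a Frobenius lift on $R$: $\Psi_p(H)\equiv H^p \bmod p$ for all $H\in R_{(p)}$. This holds on $\bZ[q^{\pm1}]$ and passes to the localisation because $\Psi_p(1-q^{\boldsymbol{n}})=1-q^{p\boldsymbol{n}}\equiv(1-q^{\boldsymbol{n}})^p$ modulo $p$, which is a unit congruence. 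The standard lemma that $x\equiv y\bmod p$ implies $x^{p^{a-1}}\equiv y^{p^{a-1}}\bmod p^{a}$ then gives the claim.
\end{itemize}

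The main obstacle is the second summand, which requires the congruence
\[
\binom{p^a N}{p^a\boldsymbol{c}}\equiv\binom{p^{a-1}N}{p^{a-1}\boldsymbol{c}}\pmod{p^{\,a+v_p(N/\gcd(\boldsymbol{c}))}},
\]
applied with $\boldsymbol{c}=m'\boldsymbol{n}$ and $N=m'|\boldsymbol{n}|$, so that the modulus is $p^{a+v_p(|\boldsymbol{n}|)}$. My first attempt would write the multinomial as a product of binomials $\binom{p^a(c_1+\dots+c_j)}{p^a c_j}$. For each factor I would apply the Jacobsthal/Ljunggren refinement of Lucas's theorem, $\binom{p x}{p y}\equiv\binom{x}{y}\bmod p^{1+v_p(x)+v_p(y)+v_p(x-y)}$ (with the usual care at $p=2,3$), in the form $\binom{p^a x}{p^a y}\equiv\binom{p^{a-1}x}{p^{a-1}y}$ modulo $p^{a+v_p(x)}$. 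I would then telescope over the factors.

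Alternatively, I would follow Konishi and Peng directly. Their argument counts the free orbits of $\bZ/p^a$ acting by rotation on words with letter multiplicities $p^a\boldsymbol{c}$. The orbit count shows that the difference equals $p^a$ times a multinomial that is again divisible by $p^{v_p(N/\gcd(\boldsymbol{c}))}$. Summing over all pairs then gives divisibility of the whole expression by $p^{a+v_p(|\boldsymbol{n}|)}$, as required.
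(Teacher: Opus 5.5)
Your proposal is correct and is essentially the paper's proof. It uses the same pairing of the Möbius terms $k'=p^am'$ and $k'=p^{a-1}m'$ and the same three inputs: parts (i) and (ii) of \Cref{lem: Konishis lemma} for the multinomials, and the congruence $G^{p^a}\equiv(\Psi_pG)^{p^{a-1}} \bmod p^a$. Your prime-by-prime bookkeeping in $R_{(p)}$ and your split of each pair differ only cosmetically from the paper's combination of the divisibilities by $p_j^{a_j}|\boldsymbol{n}|$ and its split into $(\star)$, $(*)$, $(\dagger)$.

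The one soft spot is your own sketch of $A_a\equiv A_{a-1}$, which the paper simply cites from Konishi.
\begin{itemize}
\item The free $\bZ/p^a$-orbit count yields only the factor $p^a$; the number of free orbits is not a multinomial. The fix is to let the whole Sylow $p$-subgroup of rotations of words of length $p^aN$ act; it has order $p^{a+v_p(N)}$. Every nontrivial subgroup of it contains the rotation by $p^{a-1}N$, whose fixed words are counted by $A_{a-1}$. So $A_a-A_{a-1}$ is divisible by $p^{a+v_p(N)}$.
\item Your telescoping route is incomplete as stated. When a partial sum $s_j$ has $v_p(s_j)<v_p(N)$, you also need the divisibility of the tail multinomial $\binom{p^{a-1}N}{p^{a-1}s_j,\,p^{a-1}c_{j+1},\ldots}$ by $N/\gcd(s_j,c_{j+1},\ldots)$.
\end{itemize}
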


We require the following two lemmas.

\begin{lem}
	\label{lem: Konishis lemma}
	\cite[Lem.~A.2]{Kon06:Integrality} Let $\boldsymbol{n}\in \bZ_{>0}^\ell$ with $\gcd(\boldsymbol{n})=1$.
	\begin{enumerate}
		\item For $k\in\bZ_{>0}$ we have
		\begin{equation*}
			\frac{(k|\boldsymbol{n}|)!}{\prod_i(k n_i)!} \equiv 0 \mod |\boldsymbol{n}|\,.				
		\end{equation*}
		
		\item For $p$ a prime, $a\in\bZ_{> 0}$ and $k\in\bZ_{>0}$ not divisible by $p$ we have
		\begin{flalign*}
			&& \frac{(p^{a} k|\boldsymbol{n}|)!}{\prod_i (p^{a} k n_i)!} \equiv \frac{(p^{a-1} k|\boldsymbol{n}|)!}{\prod_i (p^{a-1} k n_i)!} \mod p^a |\boldsymbol{n}| \,. && \qed
		\end{flalign*}
	\end{enumerate}
\end{lem}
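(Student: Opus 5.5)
The plan is to treat the two parts separately. Part (i) follows from a factorisation of the multinomial coefficient and B\'ezout. For part (ii) I would first prove the congruence modulo $p^a$ by a Frobenius-type argument on polynomials, and then upgrade it to modulus $p^a|\boldsymbol{n}|$ using the same factorisation as in (i).

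For (i), write $N=k|\boldsymbol{n}|$ and $M(k)=N!/\prod_j(kn_j)!$. For each index $i$ we have
\begin{equation*}
M(k)=\frac{N}{kn_i}\,T_i(k),\qquad T_i(k)\coloneqq\frac{(N-1)!}{(kn_i-1)!\prod_{j\neq i}(kn_j)!}\in\bZ .
\end{equation*}
Here $T_i(k)$ is an integer because it counts words with prescribed letter multiplicities whose first letter is $i$. Hence $n_iM(k)=|\boldsymbol{n}|\,T_i(k)$ is divisible by $|\boldsymbol{n}|$ for every $i$. Since $\gcd(\boldsymbol{n})=1$, we can choose integers $c_i$ with $\sum_ic_in_i=1$, and then $M(k)=\sum_ic_in_iM(k)\equiv0\bmod|\boldsymbol{n}|$.

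For (ii), the first step is the congruence modulo $p^a$. Starting from $(x_1+\dots+x_\ell)^p\equiv x_1^p+\dots+x_\ell^p \bmod p$, I would use the standard lifting fact that $f\equiv g\bmod p^j$ implies $f^p\equiv g^p\bmod p^{j+1}$. By induction this gives
\begin{equation*}
(x_1+\dots+x_\ell)^{p^ak|\boldsymbol{n}|}\equiv(x_1^p+\dots+x_\ell^p)^{p^{a-1}k|\boldsymbol{n}|}\mod p^a .
\end{equation*}
Comparing the coefficients of $\prod_ix_i^{p^akn_i}$ on both sides yields $M(p^ak)\equiv M(p^{a-1}k)\bmod p^a$.

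The second step upgrades this to modulus $p^a|\boldsymbol{n}|$ via the identity $n_iD=|\boldsymbol{n}|\,\Delta T_i$. Here $D\coloneqq M(p^ak)-M(p^{a-1}k)$ and $\Delta T_i\coloneqq T_i(p^ak)-T_i(p^{a-1}k)$. Suppose we know $\Delta T_i\equiv0\bmod p^a$ for all $i$. Then $n_iD\equiv0\bmod p^a|\boldsymbol{n}|$ for every $i$, and the B\'ezout combination from (i) gives $D\equiv0\bmod p^a|\boldsymbol{n}|$. In fact one index suffices. Write $|\boldsymbol{n}|=p^br$ with $p\nmid r$, and pick $i$ with $p\nmid n_i$, which exists since $\gcd(\boldsymbol{n})=1$. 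The prime-to-$p$ part of the modulus is already handled by (i), since $|\boldsymbol{n}|$ divides both $M(p^ak)$ and $M(p^{a-1}k)$. So only the $p^{a+b}$-part remains, and $n_iD=|\boldsymbol{n}|\,\Delta T_i$ reduces it to $\Delta T_i\equiv0\bmod p^a$ for this single $i$.

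The main obstacle is proving $T_i(p^ak)\equiv T_i(p^{a-1}k)\bmod p^a$. The coefficient-extraction argument does not apply directly, because $T_i$ is the coefficient of a monomial with exponent $p^akn_i-1$ in $(\sum x_j)^{p^ak|\boldsymbol{n}|-1}$, and neither this exponent nor the total degree is divisible by $p$. I would first try a combinatorial argument. Let $\bZ/p^a$ act by rotation on words of length $p^ak|\boldsymbol{n}|$ with letter counts $p^akn_j$. Count pairs (word, marked position carrying letter $i$); the number of such pairs is $p^akn_i\,M(p^ak)/n_i$-type expressions, so this relates $T_i$ to orbit counts. Orbits of non-maximal size correspond to words periodic under a rotation by a multiple of $p$, and these are counted by $T_i(p^{a-1}k)$, in the style of necklace (Gauss) congruences. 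If this bookkeeping proves messy, the fallback is to work with binomials. Factor $T_i$ as a product of binomial coefficients $\binom{p^aA-1}{p^aB-1}$ and $\binom{p^aA}{p^aB}$, and apply the classical congruence $\binom{p^aA}{p^aB}\equiv\binom{p^{a-1}A}{p^{a-1}B}\bmod p^a$. Together with $\binom{p^aA-1}{p^aB-1}=\tfrac{B}{A}\binom{p^aA}{p^aB}$, this reduces the claim to tracking the $p$-adic valuation of $B/A$, which is controlled because $p\nmid n_i$.
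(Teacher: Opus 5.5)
Part (i), the Frobenius-lifting congruence $M(p^ak)\equiv M(p^{a-1}k)\bmod p^a$, and the identity $n_iD=|\boldsymbol{n}|\,\Delta T_i$ are all correct. The gap is the step you flag yourself as the main obstacle, and it is the whole content of (ii).

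Since $\Delta T_i=\tfrac{n_i}{|\boldsymbol{n}|}D$, for $p\nmid n_i$ the congruence $\Delta T_i\equiv 0\bmod p^a$ is \emph{equivalent} to $v_p(D)\ge a+b$. That is exactly what remains after you combine your mod-$p^a$ and mod-$|\boldsymbol{n}|$ statements. So passing to $T_i$ is only a reformulation, and the nontrivial part of (ii) is left unproven.

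Your binomial fallback fails for a concrete reason. Write $T_i(p^ak)$ as $\binom{p^ak|\boldsymbol{n}|-1}{p^akn_i-1}$ times a multinomial in the remaining $n_j$. Then $A=k|\boldsymbol{n}|$ and $B=kn_i$, so $v_p(B/A)=-b\le 0$, precisely \emph{because} $p\nmid kn_i$. Changing the order in which you split off letters does not help: the rational prefactors always multiply to $n_i/|\boldsymbol{n}|$. The classical mod-$p^a$ congruence for $\binom{p^aA}{p^aB}$ therefore only gives $\Delta T_i\equiv 0\bmod p^{a-b}$. You would need $\binom{p^aA}{p^aB}\equiv\binom{p^{a-1}A}{p^{a-1}B}\bmod p^{a+v_p(A)}$, which is the two-letter case of (ii) itself (or a Jacobsthal-type supercongruence), not the classical statement.

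For example, take $p=2$, $\boldsymbol{n}=(1,1)$, $a=k=1$. Knowing $\binom{4}{2}\equiv\binom{2}{1}\bmod 2$ only tells you that $\Delta T_1=\tfrac12(6-2)$ is an integer, not that it is even. Likewise, letting $\bZ/p^a$ rotate the words only reproduces the mod-$p^a$ congruence you already have. Its non-free orbits are counted by $M(p^{a-1}k)$, and it cannot detect the factor $|\boldsymbol{n}|$.

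The missing idea is to let the full cyclic group $\bZ/N$, with $N=p^ak|\boldsymbol{n}|$, rotate the words of length $N$ with letter counts $p^akn_j$, and to use $\gcd(\boldsymbol{n})=1$ on the periods.
\begin{itemize}
\item A word of minimal period $d$ consists of $N/d$ copies of a block, so $N/d$ divides every $p^akn_j$ and hence divides $p^ak$.
\item Thus $d=|\boldsymbol{n}|e$ with $e\mid p^ak$, and the orbit of the word has exactly $d$ elements.
\item The words invariant under rotation by $p^{a-1}k|\boldsymbol{n}|$ are $p$ copies of a block with letter counts $p^{a-1}kn_j$, so there are $M(p^{a-1}k)$ of them.
\item Every remaining word has $e\nmid p^{a-1}k$. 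Since $p\nmid k$, this forces $p^a\mid e$.
\end{itemize}
Hence $D=M(p^ak)-M(p^{a-1}k)$ is a sum of orbit sizes, each divisible by $p^a|\boldsymbol{n}|$, which proves (ii) directly. The same count over all orbits gives (i). With this argument the detour through $T_i$ and B\'ezout is unnecessary.
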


\begin{lem}
	\label{lem: Gpi vs Psi Gpi minus 1}
	Let $p$ be a prime and $a\in\bZ_{> 0}$ be an integer. Then for all $G\in R$ we have
	\begin{equation*}
		G^{p^a} - \left(\Psi_p G\right)^{p^{a-1}} \equiv 0 \mod p^a \,.
	\end{equation*}
\end{lem}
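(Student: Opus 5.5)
We need to prove that for every $G\in R$ the element $G^{p^a}-(\Psi_p G)^{p^{a-1}}$ lies in $p^aR$. The plan is to reduce to $a=1$ and then bootstrap using a binomial-type lifting argument.

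\emph{Base case $a=1$.} We show $G^p\equiv \Psi_p G \bmod pR$.

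First take $G$ in the unlocalised ring $\bZ[q_1^{\pm1},\ldots,q_m^{\pm1}]$. Then $\Psi_p$ is the ring endomorphism $q_i\mapsto q_i^p$. Write $G=\sum_j c_j q^{\boldsymbol{m}_j}$. The Frobenius congruence $(x+y)^p\equiv x^p+y^p \bmod p$ gives
\[
G^p\equiv \sum_j c_j^p q^{p\boldsymbol{m}_j}\bmod p .
\]
Fermat's little theorem gives $c_j^p\equiv c_j \bmod p$, so the right-hand side is congruent to $\Psi_pG$.

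Next consider the inverted elements $u=1-q^{\boldsymbol{n}}$. Here $u^p\equiv 1-q^{p\boldsymbol{n}}=\Psi_p u \bmod p$. Since $\Psi_p u$ is itself invertible in $R$, we get
\[
\Psi_p(u^{-1})-u^{-p}=(u^p-\Psi_p u)\,u^{-p}(\Psi_pu)^{-1}\in pR .
\]
Now use that the map $F\colon G\mapsto G^p$ modulo $p$ is a ring homomorphism $R/pR\to R/pR$, because $R/pR$ has characteristic $p$. The Adams operation $\Psi_p$ is also a ring homomorphism on $R$ (the localised ring, since $\Psi_p$ maps the inverted set into itself). The two homomorphisms $R\to R/pR$ induced by $F$ and $\Psi_p$ agree on the generators $q_i^{\pm1}$ and $(1-q^{\boldsymbol{n}})^{-1}$, so they agree on all of $R$. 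This settles $a=1$.

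\emph{Induction on $a$.} Suppose $G^{p^{a-1}}=(\Psi_pG)^{p^{a-2}}+p^{a-1}X$ with $X\in R$, where $a\ge2$. Raise both sides to the $p$-th power:
\[
G^{p^a}=\sum_{j=0}^{p}\binom{p}{j}(\Psi_pG)^{p^{a-2}(p-j)}p^{(a-1)j}X^j .
\]
The $j=0$ term is $(\Psi_pG)^{p^{a-1}}$. For $1\le j\le p-1$, the factor $p\mid\binom pj$ makes the term divisible by $p^{1+(a-1)j}$, and $1+(a-1)j\ge a$. For $j=p$, the term is divisible by $p^{(a-1)p}$, and $(a-1)p\ge a$ whenever $a\ge2$, including $p=2$. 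Hence $G^{p^a}\equiv(\Psi_pG)^{p^{a-1}}\bmod p^a$.

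The only delicate point is the base case on the localised generators. It needs $\Psi_p$ to preserve the multiplicative set and the inverted elements to remain units modulo $p$; both hold because $R$ is defined by inverting all $1-q^{\boldsymbol{n}}$ with $\boldsymbol{n}\neq0$. The binomial lifting step is the standard Dwork-type argument and presents no difficulty.
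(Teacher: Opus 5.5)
Your proof is correct, but it is organised differently from the paper's. The paper first proves the congruence for every $a$ in the unlocalised ring $\Rep(\gpT)$ (Frobenius for $a=1$, then the same binomial induction you use). Only afterwards does it pass to $R$. Writing $G=G_1/G_2$ with $G_2=\prod_j(1-\chi_j)$, it cross-multiplies to
\[
G^{p^a}-(\Psi_pG)^{p^{a-1}}=\frac{G_1^{p^a}\big((\Psi_pG_2)^{p^{a-1}}-G_2^{p^a}\big)+G_2^{p^a}\big(G_1^{p^a}-(\Psi_pG_1)^{p^{a-1}}\big)}{G_2^{p^a}(\Psi_pG_2)^{p^{a-1}}}
\]
and applies the unlocalised statement to $G_1$ and $G_2$.

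You instead localise only the base case. You observe that $G\mapsto G^p$ and $\Psi_p$ are two ring maps $R\to R/pR$ agreeing on generators, and then run the lifting induction directly in $R$.

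Your route makes explicit that the lemma is the general fact that a ring endomorphism lifting Frobenius modulo $p$ satisfies the $p^a$-congruences. The specific structure of $R$ then enters only at $a=1$, through $\Psi_p(1-q^{\boldsymbol n})=1-q^{p\boldsymbol n}$ being again a unit. Your explicit computation for $u^{-1}$ is in fact unnecessary: two ring homomorphisms that agree on a unit $u$ automatically agree on $u^{-1}$. The paper's route keeps all congruence arguments in the Laurent polynomial ring and handles denominators with one algebraic identity, at the cost of that extra identity.

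One small point in your favour: you treat the $j=p$ term of the binomial expansion separately, using $(a-1)p\ge a$ for $a\ge 2$. The paper's justification (that $p$ divides $\binom{p}{k}$ for $k>0$) overlooks that $\binom{p}{p}=1$.
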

\begin{proof}
	We first prove the relation assuming $G\in \Rep(\gpT) \subset R$. We will prove the claim by induction on $a$. For $a=1$ the claim holds by the standard argument proving additivity of the Frobenius morphism. Now suppose the claim holds for $a-1$, that is there exists a $g\in R$ such that
	\begin{equation*}
		G^{p^{a-1}} - \left(\Psi_p G\right)^{p^{a-2}} = p^{a-1} g\,.
	\end{equation*}
	We find that
	\begin{align*}
		G^{p^a} - \left(\Psi_p G\right)^{p^{a-1}} & = \left( \left(\Psi_p G\right)^{p^{a-2}} + p^{a-1} g \right)^p - \left(\left(\Psi_p G\right)^{p^{a-2}}\right)^p \\
		& = \sum_{k>0} \binom{p}{k}  \, p^{(a-1)k} g^k \, \left(\Psi_p G\right)^{p^{a-2}(p-k)}\\
		& \equiv 0 \mod p^a
	\end{align*}
	since $p$ divides $\binom{p}{k}$ for $k>0$. This means the claim is proven for $G\in \Rep(\gpT)$.
	
	We now extend to general $G \in R$ by reduction to the last case. For this we express $G= {G_1}/{G_2}$ as a ratio of elements $G_1\in \Rep(\gpT)$ and $G_2 = \prod_{j} (1-\chi_j)$ where each $\chi_j$ is a monomial in $\Rep(\gpT)$. We can write 
	\begin{equation*}
		G^{p^a} - \left(\Psi_p G\right)^{p^{a-1}} = \frac{1}{G_2^{p^a} (\Psi_p G_2)^{p^{a-1}}} \left[ G_1^{p^a} \left((\Psi_p G_2)^{p^{a-1}} - G_2^{p^{a}}\right) + G_2^{p^a} \left(G_1^{p^a} - (\Psi_p G_1)^{p^{a-1}}\right)  \right] \,.
	\end{equation*}
	Since $G_1,G_2\in \Rep(\gpT)$ we know that the numerator on the right-hand side is divisible by $p^a$ and so the \namecref{lem: Gpi vs Psi Gpi minus 1} follows.
\end{proof}

\begin{rmk}
	The proof of \Cref{lem: Gpi vs Psi Gpi minus 1} is the only place in our proof of \Cref{lem: PLog preserves integrality} that uses any features special to $R = \Rep(\gpT)_{\mr{loc}}$. All other steps in our proof hold for an arbitrary lambda ring. Thus, the conclusion of \Cref{lem: PLog preserves integrality} actually holds for any lambda ring $R$ satisfying \Cref{lem: Gpi vs Psi Gpi minus 1}.
\end{rmk}

Now we are equipped to prove \Cref{prop: every term is integral} from which \Cref{lem: PLog preserves integrality} follows by equation \eqref{eq: formula Omega}.

\begin{proof}[Proof of \Cref{prop: every term is integral}]
	Our proof closely follows Konishi  \cite[Sec.~A.2]{Kon06:Integrality}. First, note that for $k=1$ the claim of the \namecref{prop: every term is integral} holds trivially. Thus, assume $k>1$ and write $k=\prod_{j=1}^s p_{\mathrlap{j}}{\mathstrut}^{a_j}$ for its prime decomposition. It suffices to show that
	\begin{equation}
		\label{eq: term Omega after prime decomposition}
		\sum_{k'|k} \mu\left(\frac{k}{k'}\right) \frac{(k'|\boldsymbol{n}|)!}{\prod_{i} (k'n_i)!}  \left(\Psi_{k/k'} G \right)^{k'} 
	\end{equation}
	is an element of $R$ which is divisible by $p_{\mathrlap{j}}{\mathstrut}^{a_j} |\boldsymbol{n}|$ for all $j\in\{1,\ldots,s\}$. Without loss of generality we may prove the claim for $j=1$. Writing $l\coloneqq k/p_{\mathrlap{1}}{\mathstrut}^{a_1}$ observe that since for every divisor $k'$ of $k$ we have
	\begin{equation*}
		\mu\left(\frac{k}{k'}\right) = \begin{cases}
			\mu\left(\frac{l}{l'}\right) & k'=p_1^{a_1}l'\\
			-\mu\left(\frac{l}{l'}\right) & k'=p_1^{a_1-1}l'\\
			0 & \text{otherwise}\\
		\end{cases}
	\end{equation*}
	expression \eqref{eq: term Omega after prime decomposition} equates to
	\begin{align*}
		\sum_{l'\vertspace l} \mu\left(\frac{l}{l'}\right)~ & \left( \frac{(p_1^{a_1} l'|\boldsymbol{n}|)!}{\prod_i (p_1^{a_1} l'n_i)!} \left(\Psi_{l/l'}G\right)^{p_1^{a_1}l'} - \frac{(p_1^{a_1-1} l'|\boldsymbol{n}|)!}{\prod_i (p_1^{a_1-1} l'n_i)!} \left(\Psi_{p_1 l/l'}G\right)^{p_1^{a_1-1}l'}\right) \\[0.8em]
		= & \sum_{l'\vertspace l} \mu\left(\frac{l}{l'}\right) ~ \Psi_{l/l'}(G)^{p_1^{a_1}l'} \underbracket{\left(\frac{(p_1^{a_1} l'|\boldsymbol{n}|)!}{\prod_i (p_1^{a_1} l'n_i)!} - \frac{(p_1^{a_1-1} l'|\boldsymbol{n}|)!}{\prod_i (p_1^{a_1-1} l'n_i)!}\right)}_{(\star)} \\
		+ & \sum_{l'\vertspace l} \mu\left(\frac{l}{l'}\right) ~ \underbracket{\frac{(p_1^{a_1-1} l'|\boldsymbol{n}|)!}{\prod_i (p_1^{a_1-1} l'n_i)!}}_{(*)} \underbracket{\left( \Psi_{l/l'}(G^{l'})^{p_1^{a_1}} - \Psi_{p_1 l/l'}(G^{l'})^{p_1^{a_1-1}}\right)\vphantom{\frac{(p_1^{a_1-1} l'|\boldsymbol{n}|)!}{\prod_i (p_1^{a_1-1} l'n_i)!})}}_{(\dagger)}\,.
	\end{align*}
	The \namecref{prop: every term is integral} then follows from the facts that $p_1^{a_1}|\boldsymbol{n}|$ divides the term ($\star$) by \Cref{lem: Konishis lemma} (ii) and that $|\boldsymbol{n}|$ divides ($*$) by \Cref{lem: Konishis lemma} (i). Finally, $p_1^{a_1}$ divides the term ($\dagger$) by \Cref{lem: Gpi vs Psi Gpi minus 1}.
\end{proof}

\begingroup
\setlength{\emergencystretch}{.5em}
\renewcommand*{\bibfont}{\footnotesize}
\printbibliography\medskip
\endgroup

\end{document}